\tikzset{
every picture/.style={line width=0.8pt, >=stealth,
                       baseline=-3pt,label distance=-3pt},
dotnode/.style={fill=black,circle,minimum size=2.5pt, inner sep=1pt, outer
sep=0},
morphism/.style={circle,draw,thin, inner sep=1pt, minimum size=15pt,
                 scale=0.8},
small_morphism/.style={circle,draw,thin,inner sep=1pt,
                       minimum size=10pt, scale=0.8},
coupon/.style={draw,thin, inner sep=1pt, minimum size=18pt,scale=0.8},
regular/.style={densely dashed},
dual/.style={dashed, draw=blue, text=black},
boundary/.style={thick,  draw=blue, text=black},
overline/.style={preaction={draw,line width=2mm,white,-}},
drinfeld center/.style={>=stealth,green!60!black, double distance=1pt,text=black},
hopf/.style={>=stealth,blue!60!black, double
distance=1pt,text=black}, 
cell/.style={fill=black!10},
subgraph/.style={fill=black!30},
midarrow/.style={postaction={decorate},
                 decoration={
                    markings,
                    mark=at position #1 with {\arrow{>}},
                 }},
midarrow/.default=0.5
}
\newcommand{\pentagon}{
 \node[dotnode] (v1) at (72:1) {};
 \node[dotnode] (v2) at (0:1) {};
 \node[dotnode] (v3) at (-72:1) {};
 \node[dotnode] (v4) at (-144:1) {};
 \node[dotnode] (v5) at (144:1) {};
}
\newtheorem*{theorem*}{Theorem}
\newtheorem{theorem}{Theorem}[section]
\newtheorem{lemma}[theorem]{Lemma}
\newtheorem{corollary}[theorem]{Corollary}
\theoremstyle{definition}
\newtheorem{definition}[theorem]{Definition}
\newtheorem{example}[theorem]{Example}
\theoremstyle{remark}
\newtheorem{remark}[theorem]{Remark}
\numberwithin{equation}{section}
\newcommand{\firef}[1]{Figure~{\rm\ref{#1}}}
\newcommand{\thref}[1]{Theorem~{\rm\ref{#1}}}
\newcommand{\leref}[1]{Lemma~{\rm\ref{#1}}}
\newcommand{\coref}[1]{Corollary~{\rm\ref{#1}}}
\newcommand{\deref}[1]{Definition~{\rm\ref{#1}}}
\newcommand{\seref}[1]{Section~{\rm\ref{#1}}}
\newcommand{\st}{\; | \;}                               
\newcommand{\ov}{\overline}
\newcommand{\del}{\partial}
\newcommand{\<}{\langle}
\renewcommand{\>}{\rangle}
\newcommand{\xxto}{\xrightarrow}              
\newcommand{\dotimes}{\otimes\dots\otimes}    
\newcommand{\one}{\mathbf{1}}
\renewcommand{\i}{{\mathrm{i}}}   
\newcommand{\CC}{\mathbb{C}}       
\newcommand{\N}{\mathcal{N}}       
\newcommand{\DD}{\mathcal{D}}      
\newcommand{\C}{\mathcal{C}}      
\newcommand{\Chat}{\widehat{\mathcal{C}}}      
\newcommand{\M}{\mathcal{M}}      
\newcommand{\A}{\mathcal{A}}      
\newcommand{\Vect}{\mathcal{V}ec}  
\newcommand{\ee}{\mathbf{e}}       
\newcommand{\oo}{\mathbf{o}}       
\newcommand{\VV}{\mathbf{V}}       
\newcommand{\YY}{\mathbf{Y}}       
\newcommand{\Rbar}{{\overline{R}}} 
\newcommand{\HK}{\mathcal{H}_K}    
\newcommand{\HA}{\mathcal{H}_A}    
\newcommand{\LL}{\mathcal{L}}      
\renewcommand{\hbar}{{\bar h}}
\newcommand{\al}{\alpha}
\newcommand{\be}{\beta}
\newcommand{\ga}{\gamma}
\newcommand{\Ga}{\Gamma}
\newcommand{\de}{\delta}
\newcommand{\De}{\Delta}
\newcommand{\la}{\lambda}
\newcommand{\ph}{\varphi}
\newcommand{\Si}{\Sigma}
\newcommand{\eps}{\varepsilon}
\renewcommand{\th}{\theta}
\newcommand{\Hs}{H^{str}}
\newcommand{\HsD}{H^{str}_\De}
\newcommand{\Hhs}{\hat{H}^{str}}
\newcommand{\HTV}{H_{TV}}
\newcommand{\ZTV}{Z_{TV}}
\DeclareMathOperator{\Rep}{Rep}
\DeclareMathOperator{\Irr}{Irr}
\DeclareMathOperator{\id}{id}
\DeclareMathOperator{\Hom}{Hom}
\DeclareMathOperator{\End}{End}
\DeclareMathOperator{\tr}{tr}
\DeclareMathOperator{\im}{Im}
\DeclareMathOperator{\ev}{ev} 
\DeclareMathOperator{\Obj}{Obj}
\DeclareMathOperator{\VGr}{VGraph}
\begin{document}
\title{Kitaev's Lattice Model and Turaev-Viro TQFTs}
\author{Benjamin Balsam}
   \address{Department of Mathematics, Stony Brook University, 
            Stony Brook, NY 11794, USA}

    \email{balsam@math.sunysb.edu}
    \urladdr{http://www.math.sunysb.edu/\textasciitilde balsam/}
\author{Alexander Kirillov, Jr.}
   \address{Department of Mathematics, Stony Brook University,
            Stony Brook, NY 11794, USA}

    \email{kirillov@math.sunysb.edu}
    \urladdr{http://www.math.sunysb.edu/\textasciitilde kirillov/}

\begin{abstract}
In this paper, we examine Kitaev's lattice model for an arbitrary complex,
semisimple Hopf algebra. We prove that this model gives the same
topological invariants as Turaev-Viro theory. Using the description of
Turaev-Viro theory as an extended TQFT, we prove that the excited states of
the Kitaev model correspond to Turaev-Viro theory on a surface with
boundary. 
\end{abstract}
\maketitle
\section*{Introduction}
In \cite{kitaev}, Kitaev introduced a series of quantum codes on a surface
which are suitable for fault-tolerant quantum computation. The best-known
of these is the famous toric code, which is based on $\mathbb{Z}_2$. The
toric code is simple to describe, but is not general enough to allow for
universal quantum computation. \footnote{This is related to the fact that
$\mathbb{Z}_2$ is abelian.} One may
consider a similar model based on any finite group $G$ or more generally,
any finite-dimensional semisimple Hopf algebra (see
\ocite{buerschaper}).\footnote{It is possible to
consider the situation in even more generality. The category of
representations of a semisimple Hopf algebra forms a spherical category,
but not every spherical category arises this way. One may consider lattice
models which start with an arbitrary spherical category, in this
 paper, we will consider categories of representations.}. Given a group
$G$,  Kitaev constructs a Hilbert space on a triangulated surface 
and a Hamiltonian, whose ground state is a topological invariant of the
surface. The excited states of this Hamiltonian form a mathematical model
of \textit{anyons}, particles which live in two-dimensions and which have
been proposed as a model for fault-tolerant quantum computing. The Kitaev
model was studied extensively in \ocite{buerschaper}, where the
authors present the model in the general Hopf algebra setting.

The string-net model was introduced by Levin and Wen in the context of
condensed matter physics \ocite{levin-wen}; in a different language (and in greater generality), this model was also described by  Walker in  \ocite{Walk}. An excellent exposition of this model and its applications to quantum
computing is \ocite{kuperberg}, and a careful mathematical description can
be found in \ocite{stringnet}, where the author proves that the string-net
model is isomorphic to Turaev-Viro theory. 

The relationship between the work of Levin-Wen and Kitaev was discussed in
\ocite{buerag}. Finally, a thorough analysis of Kitaev's model for a finite
group is presented in \ocite{bombdel}, which includes a very detailed
description of so-called \textit{ribbon operators}. It should be noted that  these papers are geared toward physicists and are somewhat difficult to 
read for mathematicians.

The main goal of this paper is to describe Kitaev's model in a 
fashion that is easily understandable to mathematicians and to relate it to
Turaev-Viro invariants. We begin with the construction of Kitaev's model for a general (semisimple) Hopf algebra $R$.  We carefully review the construction of a Hilbert space and Hamiltonian from  \ocite{buerschaper} and prove that on a
closed surface $\Sigma$, the ground state of this model is isomorphic to
that of the string-net model (and hence the Turaev-Viro space) on
$\Sigma$. This part is essentially a reformulation of known results; we hope, however, that our exposition will be more accessible to mathematicians. 

We then examine excited states of the Hamiltonian. These correspond to
higher eigenstates of the Hamiltonian, and  decompose into a
\textit{protected} space which is a topological invariant of $\Sigma$ and
certain local excitation spaces, which correspond to irreducible
representations of $D(R)$, the Drinfeld double of $R$. We show that the
excited states of the Kitaev model correspond to the Turaev-Viro and
string-net models for surfaces with boundary (see \ocite{balsam-kirillov},
\ocite{stringnet}). This part of the paper is new.

\section{Hopf Algebras} \label{s:hopf}
\subsection{Basic definitions}
Throughout the paper, we denote by $R$ a finite dimensional Hopf algebra
over $\CC$ with 
\begin{itemize}
\item multiplication $\mu_R\colon R\otimes R\to R$, 
\item unit $\eta_R\colon \CC\to R$
\item comultiplication $\Delta_R\colon R \to R\otimes R$
\item counit $\epsilon_R\colon R\to \CC$
\item antipode  $S_R\colon R\to R$ 
\end{itemize}
We will drop  the subscript $R$ when there is no ambiguity.

We will use the Sweedler notation, writing $\De (x)=x'\otimes x''$,
$\De^2(x)=x'\otimes x''\otimes x'''$, etc.;  summation will be implicit in
these formulas. If the number of factors is large, we will also use the
alternative notation writing $\De^{(n-1)}(x)=x^{(1)}\otimes
x^{(2)}\dotimes x^{(n)}$. 

We will denote by $R^*$ the dual Hopf algebra. We will use Greek letters
$\al, \be,\dots$ for elements of $R^*$. We will also use the Sweedler
notation for comultiplication in $R^*$, writing
$\De_{R^*}(\al)=\al'\otimes\al''$; thus, 
\begin{equation}\label{e:comult_dual}
  \<\al'\otimes\al'', x_1\otimes x_2\>=\<\al, x_1x_2\>
\end{equation}
where $\<\, \>$ stands for the canonical pairing $R^*\otimes R\to \CC$.

From now on, we will also assume that $R$ is semisimple. The following
theorem shows that in fact this condition can be replaced by one of
several equivalent conditions.

\begin{theorem} \label{t:larson} \cite{LR}
  Let $R$ be a finite-dimensional Hopf algebra over a field of
  characteristic zero. Then the following are equivalent: 
  \begin{enumerate}
    \item $R$ is semisimple.
    \item $R^*$ is semisimple.
    \item $S_R^2 = \id$.
  \end{enumerate}
\end{theorem}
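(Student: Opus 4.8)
The plan is to base the three equivalences on the classical theory of integrals. Recall that the space of left integrals of a finite-dimensional Hopf algebra is one-dimensional (Larson--Sweedler); fix a nonzero left integral $\La\in R$ and a nonzero left integral $\la\in R^*$, and note that the counit of $R^*$ is evaluation at $1\in R$. The first step is \emph{Maschke's theorem} in Hopf-algebraic form, which reformulates conditions (1) and (2) as nonvanishing statements: $R$ is semisimple if and only if $\epsilon(\La)\neq 0$, and, applied to $R^*$, it gives that $R^*$ is semisimple if and only if $\la(1)\neq 0$. This turns the theorem into a comparison of the two scalars $\epsilon(\La)$ and $\la(1)$ with the antipode.

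The bridge between these scalars and the antipode is a trace formula for $S_R^2$. First I would record Radford's formula for $S^4$: whenever $R$ and $R^*$ are both unimodular, $S_R^4=\id$. Since a semisimple Hopf algebra is automatically unimodular, once (1) and (2) are known the operator $S_R^2$ satisfies $(S_R^2)^2=\id$, so all its eigenvalues are $\pm 1$. Second, I would invoke the Larson--Radford trace formula, which in the unimodular case reads $\tr(S_R^2)\,\langle\la,\La\rangle=\epsilon(\La)\,\la(1)$. Normalizing $\epsilon(\La)=\la(1)=1$ and using the normalization identity $\langle\la,\La\rangle=1/\dim_\CC R$, valid in the semisimple--cosemisimple case, gives $\tr(S_R^2)=\dim_\CC R$. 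Here characteristic zero enters decisively: the $\dim_\CC R$ eigenvalues of $S_R^2$ are each $\pm 1$ and sum to $\dim_\CC R$, which forces every one of them to equal $+1$, so $S_R^2=\id$. This establishes that (1) and (2) together imply (3).

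For the converse direction, (3) implies (1) and (2), I would again use the trace formula. If $S_R^2=\id$ then $\tr(S_R^2)=\dim_\CC R$, which is nonzero precisely because the characteristic is zero; the trace formula then forces $\epsilon(\La)\neq 0$ and $\la(1)\neq 0$, so (1) and (2) follow from Maschke. It is worth observing that (3) is self-dual, since $S_{R^*}=(S_R)^{*}$ gives $S_{R^*}^2=\id\iff S_R^2=\id$, while the two semisimplicity conditions are interchanged on passing from $R$ to $R^*$; in particular the implication from (2) to (1) is just the dual of the implication from (1) to (2), so it suffices to treat $R$ throughout.

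The serious content, and the main obstacle, is the remaining implication from (1) to (2): that a semisimple Hopf algebra in characteristic zero is automatically \emph{cosemisimple}, that is $\la(1)\neq 0$. This is the deep half of Larson--Radford, and it is exactly where the characteristic-zero hypothesis is indispensable --- the equivalence genuinely fails in positive characteristic. Its proof requires a delicate analysis of the left integral $\La$ and of the trace of $S_R^2$ that cannot be reduced to the formal manipulations above; I would treat this as the heart of the argument and otherwise refer to \cite{LR}.
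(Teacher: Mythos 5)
The paper does not prove this theorem at all: it is quoted as background with a citation to Larson--Radford \cite{LR}, so there is no in-paper argument to compare yours against. Judged on its own terms, your outline correctly assembles the standard ingredients (one-dimensionality of the space of integrals, the Hopf-algebraic Maschke criterion $\epsilon(\La)\neq 0$, Radford's $S^4$ formula, and the trace formula $\tr(S^2_R)\,\la(\La)=\epsilon(\La)\,\la(1)$), and the direction $(3)\Rightarrow(1),(2)$ is indeed the formal one: $S_R^2=\id$ gives $\tr(S_R^2)=\dim_{\CC}R\neq 0$ in characteristic zero, whence both scalars are nonzero. Your observation that $(1)\Leftrightarrow(2)$ dualizes correctly is also fine.

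The genuine gap is in your claim that $(1)+(2)\Rightarrow(3)$ reduces to ``formal manipulations'' once one invokes the normalization identity $\la(\La)=1/\dim_{\CC}R$. That identity is not an independently available fact in the semisimple--cosemisimple setting: given the trace formula and the normalization $\epsilon(\La)=\la(1)=1$, the statement $\la(\La)=1/\dim_{\CC}R$ is \emph{equivalent} to $\tr(S_R^2)=\dim_{\CC}R$, which (given $S_R^4=\id$ and characteristic zero) is equivalent to $S_R^2=\id$ itself. The usual derivation of $\la(\La)=1/\dim_{\CC}R$ --- e.g.\ identifying $\la$ with the normalized trace of the regular representation, as in the paper's formula $\<\bar h,x\>=\tfrac{1}{\dim R}\tr_R(x)$ --- already uses $S^2=\id$, so your argument is circular at exactly this point. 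What the formal manipulations actually yield is only $\tr(S_R^2)=1/\la(\La)\neq 0$ together with eigenvalues $\pm 1$, hence $|\tr(S_R^2)|\le\dim_{\CC}R$ with equality iff $S_R^2=\id$; producing the matching lower bound is the second hard theorem of \cite{LR}, on the same footing as the implication $(1)\Rightarrow(2)$ that you do flag as deep. So your sketch mislocates the difficulty: there are two nontrivial inputs to defer to \cite{LR}, not one.
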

\subsection{Haar integral}
Let $R$ be as described above. Then we have a distinguished element in $R$
called the Haar integral which is defined by the following conditions:
\begin{enumerate}
\item $ hx = xh = \epsilon_{R}(x)h\quad \text{ for all }x \in R$.
\item $ h^2 = h$.
\end{enumerate}
 The following theorem lists important properties of the Haar integral.
\begin{theorem} \label{t:haar}
  Let $R$ be a semisimple, finite-dimensional Hopf algebra. Then
  \begin{enumerate}
    \item $h$ exists and is unique.
    \item $S(h) = h$.
    \item For any $n$, the element
    $$
    h_n=\De^{(n-1)}(h)\in R^{\otimes n}
    $$ 
    is cyclically invariant. In particular,  for $n=2$, we have
    $\De^{op}(h) = \De(h)$. 
\end{enumerate}
\end{theorem}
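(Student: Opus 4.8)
The plan is to establish the three parts in order, invoking \thref{t:larson} to supply $S^2=\id$ and the semisimplicity of $R^*$ wherever they are needed.

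For part (1), I would first recall the classical fact that the space of left integrals $\{\ell \st x\ell=\epsilon(x)\ell\ \forall x\}$ is one-dimensional, and that semisimplicity of $R$ (Maschke's theorem) produces a left integral $\ell$ with $\epsilon(\ell)\neq 0$, which I rescale so that $\epsilon(\ell)=1$. The one real point is to upgrade $\ell$ to a two-sided integral. Since $x(\ell r)=(x\ell)r=\epsilon(x)\,\ell r$, the one-dimensional space of left integrals is a right ideal, so $\ell r=\al(r)\ell$ for a unique algebra map $\al\colon R\to\CC$; applying $\epsilon$ and using $\epsilon(\ell)=1$ forces $\al=\epsilon$, hence $\ell r=\epsilon(r)\ell$ and $\ell$ is two-sided. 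Put $h:=\ell$; then $h^2=\epsilon(h)h=h$. For uniqueness, if $h,h'$ both satisfy (1)--(2), then comparing $h^2=\epsilon(h)h$ with $h^2=h$ gives $\epsilon(h)=1$ (and likewise $\epsilon(h')=1$), after which $hh'=\epsilon(h')h=h$ while $hh'=\epsilon(h)h'=h'$, so $h=h'$.

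For part (2), I would show $S(h)$ again satisfies (1)--(2) and conclude $S(h)=h$ by the uniqueness just proved. Applying the anti-automorphism $S$ to $xh=\epsilon(x)h$ and to $hx=\epsilon(x)h$, and using $\epsilon\circ S=\epsilon$ together with the bijectivity of $S$ (valid since $S^2=\id$), yields $S(h)\,y=y\,S(h)=\epsilon(y)S(h)$ for all $y$; moreover $\epsilon(S(h))=\epsilon(h)=1$, so $S(h)$ is a normalized two-sided integral and hence equals $h$.

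For part (3), the crucial observation is that the whole family of cyclic-invariance statements collapses to one. Pairing $h_n=\De^{(n-1)}(h)$ with $\al_1\ttt\dots\ttt\al_n\in(R^*)^{\ttt n}$ gives $\<\al_1\ttt\dots\ttt\al_n,h_n\>=(\al_1\al_2\cdots\al_n)(h)$, the value on $h$ of the convolution product in $R^*$, while the cyclic shift of $h_n$ pairs to $(\al_2\cdots\al_n\al_1)(h)$. Hence $h_n$ is cyclically invariant for every $n$ if and only if $h$, viewed as a functional on $R^*$, is a \emph{trace}: $(\al\be)(h)=(\be\al)(h)$ for all $\al,\be\in R^*$; for $n=2$ this is exactly $\De(h)=\De^{\mathrm{op}}(h)$. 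This traciality is the main obstacle, because $S(h)=h$ and $S^2=\id$ by themselves only give $\De^{\mathrm{op}}(h)=(S\ttt S)\De(h)$, which does not suffice without genuine use of the integral property. I would finish by applying \thref{t:larson} to $R^*$: it is semisimple, hence involutory, and $h$ is precisely the two-sided integral functional of $R^*$ (the integral of $(R^*)^*\cong R$). The standard integral--antipode identity $\lambda(ab)=\lambda\bigl(b\,S_A^2(a)\bigr)$ for the integral functional $\lambda$ of a Hopf algebra $A$, applied with $A=R^*$, $\lambda=h$, and $S_{R^*}^2=(S_R^2)^*=\id$, gives $h(\al\be)=h(\be\al)$; equivalently, the Haar functional of the cosemisimple Hopf algebra $R^*$ is a trace. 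This yields $\De(h)=\De^{\mathrm{op}}(h)$ and, through the reduction above, the cyclic invariance of $h_n$ for all $n$.
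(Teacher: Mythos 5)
Your proof is correct. Note that the paper itself offers no proof of this theorem---it is stated as a list of standard properties of the Haar integral (with the general references \cite{LR}, \cite{Kassel}, \cite{etingof-schiffmann} doing the work)---so there is no argument in the text to compare against; judged on its own, your write-up is sound. Parts (1) and (2) are the standard arguments: Maschke plus one-dimensionality of the space of left integrals, the observation that $\ell r$ is again a left integral so that applying $\epsilon$ forces two-sidedness, and uniqueness of the normalized two-sided idempotent integral, which then pins down $S(h)$. (One pedantic point: $h=0$ also satisfies the two defining conditions as literally stated, so ``unique'' should be read as ``unique nonzero,'' and your derivation of $\epsilon(h)=1$ tacitly uses $h\neq 0$.) For part (3), your reduction of cyclic invariance of all the $h_n$ to traciality of $h$ as a functional on $R^*$ is exactly right, as is your warning that $S(h)=h$ and $S^2=\id$ alone only yield $\Delta^{op}(h)=(S\otimes S)\Delta(h)$. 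The one place you lean on an external black box is the integral--antipode identity $\lambda(ab)=\lambda(b\,S^2(a))$, which in general carries corrections by distinguished group-like elements; these vanish here because $R^*$ is semisimple (hence unimodular) and involutory by \thref{t:larson}, so your application is legitimate, but it deserves a word. A more self-contained route, consistent with what the paper does later for $\Rbar$ (the unnumbered lemma giving $\langle \bar h, x\rangle=\frac{1}{\dim R}\tr_R(x)$), is to observe that the integral of a semisimple Hopf algebra, viewed as a functional on the dual, is the normalized trace of the regular representation; traciality, and hence $\Delta(h)=\Delta^{op}(h)$ and the full cyclic invariance, is then immediate.
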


\begin{lemma}\label{l:haar}
 The Haar integral acts by 1 in the trivial representation of $R$ and 
 by 0 in  any irreducible non-trivial representation.
\end{lemma}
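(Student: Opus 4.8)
The plan is to exploit only the two defining identities of the Haar integral, namely $h^2=h$ and $xh=\epsilon(x)h$ for all $x\in R$, together with irreducibility. First I would dispose of the trivial representation. Recall that the trivial representation is $\CC$ with $x$ acting by the scalar $\epsilon(x)$, so $h$ acts by $\epsilon(h)$, and the claim amounts to $\epsilon(h)=1$. Setting $x=h$ in $xh=\epsilon(x)h$ gives $h^2=\epsilon(h)\,h$, and combining with idempotency $h^2=h$ yields $h=\epsilon(h)\,h$. Since $h\neq 0$ (it is the genuine, nonzero two-sided integral provided by \thref{t:haar}), I conclude $\epsilon(h)=1$, settling the first assertion.

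For the second assertion I would fix an irreducible representation $(V,\rho)$ and study the operator $\rho(h)\in\End(V)$. From $h^2=h$ it is idempotent, and from $xh=\epsilon(x)h$ I get the key relation $\rho(x)\rho(h)=\epsilon(x)\rho(h)$ for every $x\in R$. Hence for any vector in the image, say $v=\rho(h)w\in\im\rho(h)$, one has $\rho(x)v=\epsilon(x)v$. In other words, $R$ acts on $\im\rho(h)$ entirely through the counit, so $\im\rho(h)$ is an $R$-subrepresentation of $V$ that is a direct sum of copies of the trivial representation.

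It then remains to invoke irreducibility. If $V$ is irreducible and nontrivial, it can contain no nonzero trivial subrepresentation: such a subrepresentation would have to equal $V$ by irreducibility, forcing $V$ itself to be trivial, a contradiction. Therefore $\im\rho(h)=0$, i.e.\ $\rho(h)=0$, which is exactly the asserted vanishing. (As a byproduct one sees that on a general representation $\rho(h)$ is precisely the projector onto the invariants $\{v:\rho(x)v=\epsilon(x)v\ \forall x\}$, which recovers both cases at once.)

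The only point I would treat with care is the normalization $\epsilon(h)=1$, equivalently $h\neq 0$. The stated conditions (1)–(2) are formally satisfied by $0$ as well, so one cannot deduce $\epsilon(h)=1$ from them in isolation; the argument must appeal to the existence clause of \thref{t:haar}, which singles out the true Haar integral. Beyond this bookkeeping I anticipate no real obstacle, since everything else is a direct manipulation of the two identities $h^2=h$ and $xh=\epsilon(x)h$ combined with a Schur-type irreducibility argument.
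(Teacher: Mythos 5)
Your proof is correct and is exactly the argument the paper has in mind: the paper's proof consists of the single sentence ``This is an immediate consequence of the definition of Haar integral,'' and your computation ($xh=\eps(x)h$ forces $\im\rho(h)$ into the trivial isotypic part, which vanishes for nontrivial irreducibles, while $h^2=h$ and $h\neq 0$ give $\eps(h)=1$) is the standard unpacking of that sentence. Your side remark about the normalization $\eps(h)=1$ requiring $h\neq 0$ from the existence clause of \thref{t:haar} is a fair and careful observation, but it does not change the route.
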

\begin{proof}
  This is an immediate consequence of the definition of Haar integral.
\end{proof}

\begin{corollary}\label{c:haar}
  Let $W_1,\dots, W_n$ be finite-dimensional representations of $R$. Then 
$h_n=\De^{n-1}(h)$ acts in $W_1\dotimes W_{n}$ by projection onto the
invariant subspace  
$$
\{w\in W_1\dotimes W_n\st \De^{n-1}(x)w=\eps(x)w\}\simeq \Hom_R(\one,
W_1\dotimes W_n)
$$
\end{corollary}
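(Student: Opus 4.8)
The plan is to recognize that letting $h_n=\De^{(n-1)}(h)$ act on $W=W_1\dotimes W_n$ is literally the same as letting the Haar integral $h$ act through the $R$-module structure on the tensor product, and then to combine \leref{l:haar} with semisimplicity. Write $\pi\colon R\to\End(W)$ for the representation of $R$ on the tensor product, so that by definition $x$ acts by $(\rho_1\ttt\dots\ttt\rho_n)(\De^{(n-1)}(x))$, where $\rho_i$ denotes the action on $W_i$. Then the operator by which $h_n$ acts is precisely $\pi(h)$, since applying $\rho_1\ttt\dots\ttt\rho_n$ to $h_n=\De^{(n-1)}(h)$ returns exactly $\pi(h)$. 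Thus it suffices to show that $\pi(h)$ is the projection onto $W^R:=\{w\in W\st \pi(x)w=\eps(x)w \text{ for all } x\in R\}$.

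First I would observe that $\pi(h)$ is an idempotent $R$-module endomorphism of $W$. Idempotency follows from $h^2=h$ because $\pi$ is an algebra map; $R$-linearity follows from the defining property $hx=xh=\eps(x)h$ of the Haar integral, which yields $\pi(h)\pi(x)=\pi(x)\pi(h)=\eps(x)\pi(h)$ for every $x\in R$. Hence $\pi(h)$ is a projection whose image is an $R$-submodule of $W$.

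Next, using that $R$ is semisimple, I would decompose $W$ into a direct sum of irreducible $R$-modules. By \leref{l:haar}, $h$ acts as the identity on each copy of the trivial representation (note that $\eps(h)=1$, since in the trivial representation $x$ acts by $\eps(x)$) and as zero on every nontrivial irreducible summand. Therefore $\pi(h)$ is exactly the projection onto the sum of the trivial summands, i.e. onto the trivial isotypic component. It remains to identify this component with $W^R$: a vector $w$ spans a trivial subrepresentation precisely when $\pi(x)w=\eps(x)w$ for all $x$, so the trivial isotypic component coincides with $W^R$, and $\De^{n-1}(x)w=\eps(x)w$ is merely the unwound form of this condition.

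Finally, the isomorphism $W^R\simeq\Hom_R(\one,W)$ is the standard one: send an $R$-module map $f\colon\one\to W$ to $f(1)\in W$, which is automatically invariant, with inverse sending an invariant $w$ to the map $\la\mapsto\la w$. I do not expect a serious obstacle here; the only point requiring genuine care is the passage in the previous paragraph from ``$h$ acts by $1$ on the trivial representation and by $0$ on nontrivial irreducibles'' to ``$\pi(h)$ projects onto the invariants.'' This is exactly where semisimplicity is essential, since it guarantees that $W$ splits as a direct sum of irreducibles and hence that the trivial and nontrivial parts decompose cleanly, with no extensions linking them.
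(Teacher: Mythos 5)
Your proof is correct and follows exactly the route the paper intends: the paper states this as an immediate corollary of Lemma~\ref{l:haar} together with semisimplicity, which is precisely the decomposition-into-irreducibles argument you spell out. No issues.
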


\subsection{Representations}
Since $R$ is semisimple, any representation of $R$ is completely 
reducible. We will denote by $V_i$,  $i\in I$, a set of representatives of 
isomorphism classes of irreducible representations of $R$; we will also
use the notation $d_i=\dim V_i$. In particular, 
the trivial one-dimensional representation of $R$ will be denoted 
$$
\one=V_0.
$$

We will frequently use the notation 
$$
\<W_1,\dots, W_n\>=\Hom_R(\one, W_1\dotimes W_n)
\simeq\{w\in W_1\dotimes W_n\st \De^{n-1}(x)w=\eps(x)w\}
$$
(compare with \coref{c:haar}).

Given a representation $V$ of $R$, we can define the dual representation
$V^*$. As a vector space $V^*$ is the  ordinary dual space to $V$. The
action of $R$ is defined by
\begin{equation} \label{e:dualrep}
\<x\al, v\> = \<\al, S(x)v\>
\end{equation}
where $\al \in V^*, v\in V$,  and $x \in R$. Note that since $S^2=\id$, the
usual vector space isomorphism $V^{**}\simeq V$ is an isomorphism of
representations; thus, $V^{**}$ is canonically isomorphic to $V$ as a 
representation of $R$.

Since  a dual of an irreducible representation is irreducible, we have an
involution $\vee\colon I\to I$ such that $V_i^*\simeq V_{i^\vee}$. This
isomorphism is not canonical; however, one does have a canonical
isomorphism  
\begin{equation}\label{e:Vcheck}
V_i\otimes V_i^*\simeq V_{i^\vee}^*\otimes V_{i^\vee}.
\end{equation}

For any two representations $V, W$ of $R$, we denote by $\Hom_R(V,W)$ the 
space of $R$-morphisms from $V$ to $W$. We have a non-degenerate pairing 
\begin{equation}\label{e:pairing}
\Hom_R(V,W)\otimes \Hom_R(V^*,W^*)\to \CC
\end{equation}
given by 
$$
\<\ph,\psi\>=(\one\to (V\otimes V^*)\xxto{\ph\otimes \psi}
               W\otimes W^* \to \one)
$$

By semisimplicity, we have a 
canonical  isomorphism
\begin{equation} \label{e:isom}
 R \cong \bigoplus_{i\in I} \End_{\mathbb{C}}(V_i)
  = \bigoplus_{i} V_i \otimes V_{i}^{*}.
\end{equation}
It is convenient to write the Hopf algebra structure of
$R$ in terms of this isomorphism. 
\begin{lemma}\label{l:hopfrewritten}
Under isomorphism \eqref{e:isom}, multiplication, comultiplication, unit,
counit, and antipode of $R$ are given by  
\begin{itemize}
\item \textbf{Multiplication}: $\mu_{R}\colon V_{i} \otimes V^*_{i} \otimes
V_{j} \otimes V^*_{j} \xrightarrow{1 \otimes ev \otimes 1} \delta_{i,j}
V_{i} \otimes V^*_{i}$, where $ev\colon V_i^*\otimes V_i\to \CC$ is the
evaluation map. 
\item \textbf{Comultiplication}:
Let $\ph_{\al}$ be a basis for $\Hom_{R}(V_{i}, V_{j} \otimes
V_{k})$ and $\varphi^{\alpha} \in \Hom_{R}(V^*_{i}, V^*_{k} \otimes
V^*_{j})$ the dual basis with respect to the pairing given by \eqref{e:pairing}.
Then
\begin{align*}
\Delta_{R}\colon   \displaystyle \bigoplus_{i} V_{i} \otimes V^*_{i}
&\xrightarrow{\displaystyle \sum d_i\varphi_{\alpha} \otimes
\varphi^{\alpha}} \displaystyle \bigoplus_{j,k} V_{j} \otimes V_{k} \otimes
V^*_{k} \otimes V^*_{j}\\
& \xrightarrow{\tau_{23}\circ \tau_{34}}
\displaystyle \bigoplus_{j,k} V_{j} \otimes V^*_{j} \otimes V_{k} \otimes
V^*_{k}
\end{align*}
\item \textbf{Unit}: $\eta_{R} = \displaystyle \bigoplus_{i} \displaystyle
\sum_{j=1}^{dim V_i} v_{j} \otimes  v^*_{j} = \displaystyle \sum_{i}
\id_{V_i}$, where $\{v_j\}$ is a basis for $V_i$ and $\{v^*_{j}\}$ is the
dual basis.
\item \textbf{Counit}: For $a \otimes b \in V_{i} \otimes V^*_{i}$, 
$\epsilon_{R}(a \otimes b) = \delta_{i,0}b(a)$ 
\item \textbf{Antipode}: If $a \otimes b \in V_{i} \otimes V^{*}_{i}$, then 
$$
S(a\otimes b) = b \otimes a\in V_i^*\otimes V_i\simeq V_{i^\vee}\otimes V_{i^\vee}^*
$$
\textup{(}see \eqref{e:Vcheck}\textup{)}. 
\end{itemize}
\end{lemma}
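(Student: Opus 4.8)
The backbone of the argument is the observation that \eqref{e:isom} is nothing but the Artin--Wedderburn isomorphism $x\mapsto\bigoplus_i\rho_i(x)$, where $\rho_i$ denotes the action of $R$ on $V_i$ and one identifies $V_i\otimes V_i^*=\End_\CC(V_i)$ through $(a\otimes b)(v)=\<b,v\>a$. Since this is an isomorphism of algebras, the multiplication and unit formulas are immediate. Multiplication on the right-hand side is componentwise composition of operators, and composing two rank-one operators in the same block gives $(a\otimes b)\circ(c\otimes d)=\<b,c\>\,a\otimes d$, while operators in different blocks multiply to zero; this is exactly the map $1\otimes\ev\otimes 1$ together with the factor $\delta_{i,j}$. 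The unit $1_R$ is the tuple of identity operators, and $\id_{V_i}=\sum_j v_j\otimes v_j^*$ for dual bases $\{v_j\}$, $\{v_j^*\}$, which gives the stated expression.

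For the counit I would use that $\eps\colon R\to\CC$ is an algebra homomorphism, hence is precisely the one-dimensional trivial representation $\one=V_0$. Thus $\eps(x)$ is the scalar by which $x$ acts on $V_0$; this vanishes on every block with $i\neq 0$, and on the block $V_0\otimes V_0^*$ the operator $a\otimes b$ acts on the line $V_0$ by the scalar $b(a)$. This produces $\eps(a\otimes b)=\delta_{i,0}\,b(a)$.

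The comultiplication is the first substantive step, and I would obtain it by dualizing. Since $\Delta_R$ is dual to the multiplication of $R^*$, i.e. $\<\Delta_R(x),\al\otimes\be\>=\<x,\al\be\>$, I would first describe $\mu_{R^*}$ under the dual decomposition $R^*\cong\bigoplus_i V_i^*\otimes V_i$, where $V_i^*\otimes V_i$ is realized as matrix coefficients of $V_i$. Multiplying the matrix coefficients of $V_j$ and $V_k$ yields a matrix coefficient of $V_j\otimes V_k$, which decomposes through the intertwiners $\ph_\al\in\Hom_R(V_i,V_j\otimes V_k)$; dualizing this decomposition with respect to the pairing \eqref{e:pairing} returns the map $\sum d_i\,\ph_\al\otimes\ph^\al$, while the transpositions $\tau_{23}\circ\tau_{34}$ merely reorder the four tensor legs $V_j\otimes V_k\otimes V_k^*\otimes V_j^*$ into $(V_j\otimes V_j^*)\otimes(V_k\otimes V_k^*)\subset R\otimes R$. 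I expect the main obstacle to lie precisely here: one must track the normalization of the pairing \eqref{e:pairing} carefully, since it is the completeness relation $\sum_\al\ph_\al\circ\ph^\al$ (a sum of (co)evaluations) that contributes the dimension factor $d_i$, and one must check that the legs are threaded in exactly the claimed order.

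Finally, for the antipode I would argue directly from \eqref{e:dualrep}. The dual representation satisfies $(\rho_{V_i^*}(x)\,\al)(v)=\<\al,S(x)v\>$, so $\rho_i(S(x))$ is the transpose of $\rho_{V_i^*}(x)$. Since $V_i^*\cong V_{i^\vee}$, this carries the block $V_i\otimes V_i^*$ to the block indexed by $i^\vee$ via the transpose of operators, and the transpose of the rank-one operator $a\otimes b$ is $b\otimes a$. Identifying $V_i^*\otimes V_i\cong V_{i^\vee}\otimes V_{i^\vee}^*$ by \eqref{e:Vcheck} then yields $S(a\otimes b)=b\otimes a$, as claimed. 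As a consistency check, this map is visibly an anti-homomorphism for the multiplication computed above, which is exactly what the antipode axioms force.
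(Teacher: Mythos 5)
The paper states \leref{l:hopfrewritten} without proof --- the authors record these formulas as standard facts about semisimple Hopf algebras and move directly on to the Haar integral --- so there is no argument of theirs to compare yours against. Your proposal is correct and would serve as a complete proof. Multiplication, unit and counit do follow at once from recognizing \eqref{e:isom} as the Artin--Wedderburn isomorphism $x\mapsto\bigoplus_i\rho_i(x)$, and your antipode computation, reading off $\rho_{V}(S(x))={}^t\rho_{V^*}(x)$ from \eqref{e:dualrep} and using \eqref{e:Vcheck} to place the transposed rank-one operator $b\otimes a$ in the block indexed by $i^\vee$, is exactly right (the anti-homomorphism consistency check is a worthwhile addition). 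For the comultiplication you correctly single out the normalization of the pairing \eqref{e:pairing} as the only delicate point. As an independent confirmation that the prefactor $d_i$ is the right one: in the counit axiom $(\eps\otimes\id)\circ\De_R=\id$ the only surviving summand is $j=0$, $k=i$, where $\Hom_R(V_i,V_0\otimes V_i)$ is spanned by the canonical isomorphism; its pairing under \eqref{e:pairing} with the corresponding element of $\Hom_R(V_i^*,V_i^*\otimes V_0^*)$ equals the categorical dimension $d_i$, so the dual basis vector carries a factor $1/d_i$ which the prefactor $d_i$ must cancel for the axiom to hold. This pins down the normalization exactly as you claim.
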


In this language, the Haar integral is given by the canonical element $1
\in V_{0} \otimes V^*_{0}$.

\subsection{Graphical calculus}
We will frequently use graphical presentation of morphisms between
representations of $R$. We will use the same conventions as in
\ocite{stringnet}, representing a morphism $\ph\colon W_1\otimes \dots
W_k\to W'_1\otimes \dots\otimes W'_l$ by a tangle with $k$ strands labeled
$W_1,\dots, W_k$ at the top and $l$ strands labeled $W'_1,\dots, W'_l$ at
the bottom. We will also use the usual cap and cup tangles to represent
evaluation and coevaluation morphisms.

\subsection{Dual Hopf algebra}
Given a semisimple Hopf algebra $R$, we will define the following 
version of the dual Hopf algebra
\begin{equation}\label{e:Rbar}
\Rbar = (R^{op})^*
\end{equation}
where  $R^{op}$ denotes the algebra $R$ with opposite multiplication. 

Since comultiplication in $\Rbar$ is opposite to comultiplication in
$R^*$,  notation $\al', \al''$ is ambiguous. We adopt the following
convention: notation $\al', \al''$ (or, equivalently, $\al^{(1)}$,
$\al^{(2)}$,\dots) always refers to comultiplication in $R^*$. Thus,
comultiplication in $\Rbar$ is given by 
$$
\De_{\Rbar}(\al)=\al''\otimes \al'.
$$

Note that $\ov{\ov{R}}$ is canonically isomorphic to $R$ as a vector space
but as a Hopf algebra, has opposite  multiplication and comultiplication.
Thus, the map 
$$
S\colon R\to \ov{\ov{R}}
$$
is an isomorphism of Hopf algebras.

Note that by \thref{t:larson}, $\Rbar$ is also semisimple and thus has a 
unique Haar integral. We will denote by 
\begin{equation}
\bar h\in \Rbar
\end{equation}
the Haar integral of $\Rbar$. 

\begin{lemma}
    Let $R$ be a semisimple Hopf algebra. Then the Haar integral of
    $\Rbar$ is given by 
    $$
    \<\bar h, x\>=\frac{1}{\dim R}\tr_R(x)
    $$
    where $\tr_R(x)=\sum d_i \tr_{V_i}(x)$ is the trace of action of $x$
    in the (left or right) regular representation. 
\end{lemma}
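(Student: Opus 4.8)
The plan is to check the defining conditions of the Haar integral directly for an explicit candidate. The first observation is that $\Rbar=(R^{op})^*$ has the \emph{same} multiplication, unit and counit as $R^*$ (only the comultiplication and antipode are reversed): the product dual to $\De_{R^{op}}=\De_R$ is the convolution $\langle\alpha\beta,x\rangle=\langle\alpha,x'\rangle\langle\beta,x''\rangle$, the unit is $\eps_R$, and the counit is $\eps_{\Rbar}(\alpha)=\langle\alpha,1_R\rangle$. Hence the Haar integral of $\Rbar$ coincides with that of $R^*$, namely the unique $\bar h\in R^*$ satisfying $\alpha\bar h=\bar h\alpha=\langle\alpha,1_R\rangle\bar h$ and $\bar h^2=\bar h$. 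Since $\Rbar$ is semisimple, \thref{t:haar}(1) guarantees uniqueness, so it suffices to exhibit one such element. I claim it is $\tau:=\frac1{\dim R}\chi_R$, where $\chi_R(x)=\tr_R(x)=\sum_i d_i\tr_{V_i}(x)$, the last equality coming from the decomposition of the regular representation as $\bigoplus_i d_iV_i$.

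Normalization and idempotency are the easy part. The key input is that convolution of characters is the character of a tensor product, $\chi_V\chi_W=\chi_{V\ttt W}$, together with the absorbing property of the regular representation: the Hopf-module map $a\ttt v\mapsto a'\ttt S(a'')v$ is an isomorphism $R\ttt V\cong R^{\oplus\dim V}$ of left $R$-modules (diagonal action on the source), whence $\chi_R\chi_V=(\dim V)\chi_R$. Taking $V=R$ gives $\chi_R^2=(\dim R)\chi_R$, i.e. $\tau^2=\tau$, and $\langle\tau,1_R\rangle=\frac1{\dim R}\chi_R(1_R)=1$.

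The remaining condition, that $\tau$ is an integral, is the heart of the matter: I must show $\alpha\tau=\langle\alpha,1_R\rangle\tau$ for \emph{all} $\alpha\in R^*$, equivalently the identity in $R$
\[
(\id\ttt\chi_R)\De(x)=\chi_R(x)\,1_R .
\]
The absorbing property gives this immediately when $\alpha$ is a character, but characters do not span $R^*$, so the real work is to extend it to all of $R^*$. I would do this in the Peter--Weyl basis of matrix coefficients $t^i_{rs}$ (which does span $R^*$) by evaluating $(\id\ttt\chi_R)\De$ through the explicit comultiplication of \leref{l:hopfrewritten}: applying $\chi_R=\sum_k d_k\tr_{V_k}$ to the second tensor factor contracts the $V_k\ttt V_k^*$ indices produced by $\sum_\alpha d_i\,\varphi_\alpha\ttt\varphi^\alpha$, and summing over the bases $\varphi_\alpha\in\Hom_R(V_i,V_j\ttt V_k)$ and their duals $\varphi^\alpha$ collapses, by the pairing \eqref{e:pairing}, to a completeness relation of the shape $\sum_{k,\alpha}d_k(\varphi_\alpha)^{pq}_a(\varphi^\alpha)^b_{qs}=\delta_{ab}\delta_{ps}$, which returns exactly $\chi_R(x)1_R$. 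I expect this index bookkeeping---verifying orthogonality/completeness of the intertwiner bases under \eqref{e:pairing}---to be the main obstacle, since every attempt to bypass it (via the coregular action of $\Rbar$ on $R$, or via the module isomorphism alone) reduces back to this same identity.

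Finally I would assemble the conclusion. The left-integral identity together with the symmetry between left and right integrals in the semisimple (hence unimodular) algebra $\Rbar$---or concretely with the $S$-invariance $\chi_R\circ S=\chi_R$, immediate from $\chi_{V_i^*}=\chi_{V_{i^\vee}}$ and $d_i=d_{i^\vee}$---shows that $\tau$ is a two-sided idempotent integral with $\langle\tau,1_R\rangle=1$. By the uniqueness in \thref{t:haar}(1) for $\Rbar$ we conclude $\bar h=\tau$, that is $\langle\bar h,x\rangle=\frac1{\dim R}\tr_R(x)$. As a consistency check, for $R=\CC[G]$ one has $\chi_R=|G|\,\delta_e$, recovering the familiar Haar integral $\delta_e$ of the function algebra.
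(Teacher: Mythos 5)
The paper states this lemma without proof (there is no \verb|proof| environment following it in the source; the authors treat it as a known fact, essentially due to Larson and Radford), so there is no in-paper argument to compare against; judged on its own, your proof is correct and complete in outline. Your opening reduction is right: $\Rbar=(R^{op})^*$ shares its multiplication, unit and counit with $R^*$, so the Haar integral of $\Rbar$ is that of $R^*$, and by uniqueness it suffices to verify the integral conditions for $\tau=\frac{1}{\dim R}\chi_R$. The normalization and the idempotency via $R\ttt V\cong R^{\oplus\dim V}$ are fine (though note the idempotency check is redundant once the integral property $\al\tau=\<\al,1\>\tau$ and $\<\tau,1\>=1$ are in hand). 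You correctly isolate the genuine content, namely $(\id\ttt\chi_R)\De(x)=\chi_R(x)1_R$ for all $x$, and correctly observe that the character-absorption argument does not settle it since characters span only the trace functionals on $R$. The completeness relation you defer, $\sum_{k,\al}d_k(\ph_\al)^{pq}_a(\ph^\al)^b_{qs}=\de_{ab}\de_{ps}$, is true and is exactly the paper's own local relation \eqref{e:local_rel} (the ``$\sum_i d_i$-bubble'' resolution of the identity), transported through the adjunction $\Hom_R(V_i,V_j\ttt V_k)\cong\Hom_R(V_j^*\ttt V_i,V_k)$; a dimension count ($\sum_k d_k N^i_{jk}=d_id_j$) confirms the normalization, so the ``main obstacle'' you flag is already available inside the paper rather than being new bookkeeping. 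The passage from left to two-sided integral via $\chi_R\circ S=\chi_R$ and $d_i=d_{i^\vee}$ is also correct. The only alternative worth mentioning is the classical route through Radford's trace formula $\tr(L_x\circ S^2)$ being a two-sided integral on $R^*$, which avoids the Peter--Weyl index computation but imports more Hopf-algebra machinery; your argument stays entirely within the explicit $R\cong\bigoplus_i V_i\ttt V_i^*$ picture that the paper itself uses throughout (\leref{l:hopfrewritten}, \leref{l:hbar}), which is arguably the more natural choice here.
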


We can also rewrite the Haar integral of $\Rbar$ in terms of the isomorphism 
$R\simeq \bigoplus V_i\otimes V_i^*$ (see \eqref{e:isom}).

\begin{lemma} \label{l:hbar}
Let $x_k=v_k\otimes w_k\in V_{i_k}\otimes V_{i_k}^*$; 
using  isomorphism \eqref{e:isom}, each $x_k$ can be considered as
element of $R$. Then 
$$
\<\bar h, x_1\dots x_n\>= \begin{cases}
          \frac{d_i}{\dim R}
           \<v_1, w_n\> \<w_1, v_2\>\dots \<w_{n-1}, v_n\>, &\quad 
                                        i_1=i_2=\dots=i_n=i,\\
           0& \text{otherwise}
           \end{cases}
$$
\textup{(}see \firef{f:hbar}\textup{)}.
\end{lemma}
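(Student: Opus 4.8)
The plan is to reduce everything to the trace formula $\<\bar h, x\>=\frac{1}{\dim R}\tr_R(x)$ established in the preceding lemma, and then to evaluate the product $x_1\cdots x_n$ directly using the description of the multiplication of $R$ in \leref{l:hopfrewritten}.

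First I would compute this product. According to the multiplication rule, $\mu_R$ restricted to $V_{i_k}\otimes V_{i_k}^*\otimes V_{i_{k+1}}\otimes V_{i_{k+1}}^*$ contracts the two inner factors by the evaluation $ev\colon V_{i_k}^*\otimes V_{i_{k+1}}\to\CC$, producing the Kronecker delta $\delta_{i_k,i_{k+1}}$ together with the scalar $\<w_k,v_{k+1}\>$. Multiplying successively from the left, one sees that $x_1\cdots x_n$ vanishes unless all the labels coincide, say $i_1=\dots=i_n=:i$, and in that case
\[
x_1\cdots x_n=\<w_1,v_2\>\<w_2,v_3\>\cdots\<w_{n-1},v_n\>\,(v_1\otimes w_n)\in V_i\otimes V_i^*.
\]
This already yields the ``otherwise'' branch of the statement, since a vanishing product has vanishing image under $\<\bar h,\cdot\>$.

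Next I would take the trace. Because $x_1\cdots x_n$ lies in the single block $V_i\otimes V_i^*=\End_\CC(V_i)$, it acts by zero on every other irreducible $V_j$, so that $\tr_R(x_1\cdots x_n)=d_i\,\tr_{V_i}(x_1\cdots x_n)$. The element $v_1\otimes w_n$ is the rank-one endomorphism $u\mapsto\<w_n,u\>v_1$ of $V_i$, whose trace is the contraction $\<v_1,w_n\>$. Multiplying by the scalar factors coming from the product and dividing by $\dim R$ produces exactly the asserted expression $\frac{d_i}{\dim R}\<v_1,w_n\>\<w_1,v_2\>\cdots\<w_{n-1},v_n\>$.

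I do not anticipate a substantive obstacle; the lemma is a direct computation. The one point deserving care is the bookkeeping of the pairings: the successive multiplications produce an open chain of contractions $\<w_1,v_2\>,\dots,\<w_{n-1},v_n\>$, which is then closed into a cycle by the final contraction $\<v_1,w_n\>$ supplied by the trace. This cyclic pattern is the conceptual content of the formula --- it matches the loop drawn in \firef{f:hbar} and is consistent with the cyclic invariance of $h_n$ from \thref{t:haar} --- and checking that the indices close up correctly is the step I would verify most carefully.
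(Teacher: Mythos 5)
Your proof is correct and is precisely the computation the paper intends (it states the lemma without proof, immediately after the trace formula $\<\bar h,x\>=\frac{1}{\dim R}\tr_R(x)$ and the multiplication rule of \leref{l:hopfrewritten}, which are exactly the two ingredients you combine). The bookkeeping of the contractions, including the closing of the open chain into a cycle by the trace, matches \firef{f:hbar}.
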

\begin{figure}[ht]
$$
\<\bar h, (v_1\otimes w_1)\cdots  (v_n\otimes w_n)\>=\frac{d_i}{\dim R}
\begin{tikzpicture}[baseline=-0.5cm]
\node[above] (v1) at (0,0) {$v_1$};
\node[above] (w1) at (0.5,0) {$w_1$};
\node[above] (v2) at (1.5,0) {$v_2$};
\node[above] (w2) at (2.0,0) {$w_2$};
\node[above] (vn) at (5,0) {$v_n$};
\node[above] (wn) at (5.5,0) {$w_n$};
\draw (w1)-- +(0,-0.2) arc(-180:0:0.5) -- +(0,0);
\draw (w2)-- +(0,-0.2) arc(-180:0:0.5) -- +(0,0);
\draw (vn)-- +(0,-0.2) arc(0:-180:0.5) -- +(0,0);
\draw (v1) ..controls +(0,-2) and +(0,-2).. (wn);
\end{tikzpicture}
$$
\caption{The Haar integral of $\Rbar$}\label{f:hbar}
\end{figure}

\subsection{Regular action}\label{s:regular}
With have two obvious actions of $R$ on itself, called left and right
regular actions: 
\begin{itemize}
\item $L_x\colon y\mapsto  xy$ 
\item $R_x\colon y\mapsto  = yS(x)$ 
\end{itemize}
Note that these actions commute. 

In a similar way, the dual Hopf algebra $\Rbar$ can also be endowed with
two commuting actions of $\Rbar$.

Using \eqref{e:dualrep}, we can also define  two actions of $R$ on $\Rbar$:
\begin{itemize}
\item  $L_x^*\colon \la\mapsto x.\la$, where  $\<x.\la,
y\>=\<\la,  S(x)y\>$
\item $R_x^*\colon \la\mapsto \la.S(x)$, where  $\<\la.x,
y\>=\<\la,  yS(x)\>$
\end{itemize}
and two actions of $\Rbar$ on on $R$:
\begin{itemize}
\item  $L_\al^*\colon x\mapsto \al.x:=\<\al,S(x')\>x''$,
so that $\<\la, \al.x\>=\<S(\al)\la, x\>$
\item $R_\al^*\colon x\mapsto x.S(\al)\>=x'\<\al, x''\>$, so that  
$\<\la,x.\al\>=\<\la S(\al),  x\>$
\end{itemize}

Note that notation $x.\al$ is ambiguous, as it can mean $L_x^*(\al)$ or
$R_{S(\al)}^*(x)$. In most cases the meaning will be clear from the
context.

Note that we can also define left and right action of the Hopf algebra
$\ov{\ov R}$ on $\Rbar$. It is easy to see that these actions are given
by
\begin{itemize}
\item  $L_t\colon \la\mapsto \la.t$, where $t\in \ov{\ov R}$ is considered
as an element of $R$ via trivial vector space isomorphism.
\item $R_t\colon \la\mapsto S(t).\la$, where $t\in \ov{\ov R}$ is
considered
as an element of $R$ via trivial vector space isomorphism.
\end{itemize}

We will use these actions (together with left and right regular 
actions of $R$ on itself)  repeatedly throughout the rest of the paper. 
All the operators we will discuss can be defined in terms of them.

\begin{lemma}\label{l:hbar2}
  Let $\hbar$ be the Haar integral of $\Rbar$. Consider the left regular
action of $\hbar$ on $R^{\otimes n}$:
\begin{equation}
\begin{aligned}
    L^*_{\hbar}\colon R^{\otimes n}&\to R^{\otimes n}\\
    x_n\dotimes x_1&\mapsto \hbar^{(n)}.x_n\dotimes \hbar^{(1)}x_1\\
                   &=\<\hbar,S(x'_n\dots x'_1)\> x''_n\dotimes x''_1
                   &=\<\hbar,x'_n\dots x'_1\> x''_n\dotimes x''_1
\end{aligned}
\end{equation}
\textup{(}recall that comultiplication in $\Rbar$ is given by
$\De^{(n-1)}\al=\al^{(n)}\dotimes \al^{(1)}$\textup{)}. 

Then, after identifying each copy of $R$ with $\bigoplus V_i\otimes
V_i^*$ \textup{(}see \eqref{e:isom}\textup{)}, $L_{\hbar}^*$ is given by the following
picture:
\begin{align*}
&\sum_{i_1, \dots, i_n,j_1, \dots, j_n, k}    \frac{d_{i_1}\dots d_{i_n}
d_k}{\dim R}\sum_{\al, \be, \dots}\\
 &   \begin{tikzpicture}
\coordinate (ytop) at (0,1); \coordinate (ybot) at (0,-3);
\node[morphism] (v1) at (1,0) {$\scriptstyle\ph_\al$};
\node[morphism] (w1) at (1.5,0) {$\scriptstyle\ph^\al$};
\node[above] at (v1|-ytop) {$i_n$};
\node[above] at (w1|-ytop) {$i^*_n$};
\node[below] at (v1|-ybot) {$j_n$};
\node[below] at (w1|-ybot) {$j^*_n$};
\draw (v1|-ytop) -- (v1) -- (v1|-ybot);
\draw (w1|-ytop) -- (w1) -- (w1|-ybot);
\node[small_morphism] (v2) at (3,0) {$\scriptstyle\ph_\be$};
\node[small_morphism] (w2) at (3.5,0) {$\scriptstyle\ph^\be$};
\draw (v2|-ytop) -- (v2) -- (v2|-ybot);
\draw (w2|-ytop) -- (w2) -- (w2|-ybot);
\draw[->] (w1) .. controls (2.25, -0.8).. (v2) node[pos=0.5, above] {$k$};
\draw[->] (w2) .. controls (4.25, -0.8).. (5,0) node[pos=0.5, above] {$k$};
\node[small_morphism] (vn) at (9,0) {};
\node[small_morphism] (wn) at (9.5,0) {};
\draw (vn|-ytop) -- (vn) -- (vn|-ybot);
\draw (wn|-ytop) -- (wn) -- (wn|-ybot);
\node[above] at (vn|-ytop) {$i_1$};
\node[above] at (wn|-ytop) {$i^*_1$};
\node[below] at (vn|-ybot) {$j_1$};
\node[below] at (wn|-ybot) {$j^*_1$};
\draw[->] (7.5,0) .. controls (8.25, -0.8).. (vn) node[pos=0.5, above] {$k$};
\coordinate (l) at (0,-1);
\coordinate (r) at (10.5,-1);
\draw[<-] (v1) 
          .. controls +(225:0.5) and +(90:0.5) ..
      (l)
          .. controls +(-90:1.5) and +(-90:1.5) ..
      (r) node[pos=0.5, above] {$k$}
          .. controls +(90:0.5) and +(-45:0.5) ..
      (wn);
\end{tikzpicture}
\end{align*}
where $\ph_\al, \ph^\al$ are as in \leref{l:hopfrewritten}.

Note that the crossings in the picture are just permutation of factors (there is no braiding in the category $\Rep R$) and the whole map is not a morphism of representations. 
\end{lemma}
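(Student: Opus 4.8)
The plan is to start from the reduced expression for the action displayed in the statement and to substitute into it the explicit comultiplication of \leref{l:hopfrewritten} together with the Haar pairing of \leref{l:hbar}. First I would dispose of the two scalar identities in the definition of $L^*_\hbar$. Expanding the action $L^*_\alpha\colon x\mapsto\langle\alpha,S(x')\rangle x''$ on each tensor factor gives $\prod_m\langle\hbar^{(m)},S(x'_m)\rangle$ times $x''_n\dotimes x''_1$, and pairing the iterated coproduct $\hbar^{(n)}\dotimes\hbar^{(1)}$ against the factors $S(x'_m)$ collapses this scalar to $\langle\hbar,S(x'_n\dots x'_1)\rangle$; the remaining equality $\langle\hbar,S(y)\rangle=\langle\hbar,y\rangle$ follows from the trace description $\langle\hbar,y\rangle=\frac1{\dim R}\tr_R(y)$ proved above together with the $S$-invariance of the regular trace. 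This reduces the problem to evaluating $\langle\hbar,x'_n\dots x'_1\rangle\,x''_n\dotimes x''_1$ in the basis $R\cong\bigoplus_i V_i\otimes V_i^*$.

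Next I would write each input as $x_m=v_m\otimes w_m\in V_{i_m}\otimes V_{i_m}^*$ and apply the comultiplication of \leref{l:hopfrewritten} to each factor separately. For the $m$-th factor this produces $d_{i_m}\sum_{k,j_m,\alpha_m}$ of a term in which, after the rearrangement $\tau_{23}\circ\tau_{34}$, the first Sweedler component $x'_m$ lands in $V_k\otimes V_k^*$ and the surviving component $x''_m$ lands in $V_{j_m}\otimes V_{j_m}^*$; here $\varphi_{\alpha_m}\colon V_{i_m}\to V_k\otimes V_{j_m}$ and $\varphi^{\alpha_m}\colon V_{i_m}^*\to V_{j_m}^*\otimes V_k^*$ are the dual intertwiner bases. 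The one point that needs care is reading off from $\tau_{23}\circ\tau_{34}$ exactly which legs of $\varphi_{\alpha_m}$ and $\varphi^{\alpha_m}$ become the contracted pair $a_m\in V_k$, $b_m\in V_k^*$ of $x'_m$ and which become the output legs of $x''_m$.

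Finally I would feed the products $x'_n\dots x'_1$ into \leref{l:hbar}. Since every $x'_m$ lies in $V_k\otimes V_k^*$, the lemma forces all the internal labels to coincide — this is the single loop label $k$ of the figure — and yields the scalar $\frac{d_k}{\dim R}$ times a cyclic product of evaluation pairings. Because the product is taken in the reversed order $x'_n\dots x'_1$, these pairings connect the $V_k^*$-leg of each $\varphi^{\alpha_m}$ to the $V_k$-leg of the neighbouring $\varphi_{\alpha_{m-1}}$, with the final pairing $\langle a_n,b_1\rangle$ closing the loop around the back; this is precisely the $k$-strand drawn in the picture, and the crossings there are just the factor transpositions coming from the several copies of $\tau_{23}\circ\tau_{34}$. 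Collecting the one factor $d_{i_m}$ from each comultiplication with the single $d_k/\dim R$ from the Haar pairing gives the stated coefficient $\frac{d_{i_1}\dots d_{i_n}d_k}{\dim R}$, while the surviving legs $x''_m\in V_{j_m}\otimes V_{j_m}^*$ become the bottom strands labelled $j_m,j_m^*$; matching all of this against the displayed picture completes the argument. The main obstacle is entirely bookkeeping: keeping the reversed order of the product and the several factor swaps consistent so that the contraction pattern closes up into the single $k$-loop shown, rather than some other permutation.
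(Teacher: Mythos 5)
Your proposal is correct and follows exactly the route of the paper, whose proof is the one-line remark that the lemma ``follows by combining the formula for multiplication and comultiplication in $R$ (\leref{l:hopfrewritten}) and \leref{l:hbar}''; you simply carry out that combination explicitly, including the bookkeeping of the reversed product order and the $\tau_{23}\circ\tau_{34}$ swaps that the paper leaves to the reader. The coefficient count ($d_{i_m}$ from each coproduct, $d_k/\dim R$ from the Haar pairing) and the closing of the $k$-loop match the displayed picture.
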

\begin{proof}
    Follows by combining the formula for multiplication and
comultiplication in $R$ (\leref{l:hopfrewritten}) and \leref{l:hbar}. 
\end{proof}

\subsection{Drinfeld double}

Given a finite-dimensional semisimple Hopf algebra $R$, there is a
well-known way to construct from it a quasitriangular Hopf algebra $D(R)$.
This new Hopf algebra, called the \textit{Drinfeld Double} of $R$, has
numerous applications in representation theory and physics. We will review
some basic details of the construction. For a much more detailed
description and proofs see, e.g.  \cite{Kassel},
\cite{etingof-schiffmann}. 

Let $R$ be a semisimple, finite-dimensional Hopf algebra; as before, let 
$\Rbar = (R^{op})^*$.

\begin{theorem}
  The following operations define on the vector space $R\otimes \Rbar$ a
  structure of a Hopf algebra. This Hopf algebra will be denoted $D(R)$ and
  called {\em Drinfeld double} of $R$. 

  \begin{enumerate}
    \item Multiplication: 
     $$
      (x\otimes \al )\cdot(y \otimes \be ) 
        = x  y'' \otimes  \al^y \be,
     $$
     where  $\al^y \in \Rbar$ is  defined by 
     \begin{equation}\label{e:double2}
        \< \al^y, z\>=\<\al,y'''zS^{-1}(y')\>
     \end{equation}
    \item Unit: $1_{D(R)} =   1_{R} \otimes 1_{\Rbar}$
    \item Comultiplication: 
      $x\otimes \al \mapsto (x'\otimes \al'') \otimes (x''\otimes \al')$
    \item Counit: 
      $x\otimes \al 
       \mapsto \epsilon_{R}(x) \epsilon_{\Rbar}(\al)$
    \item Antipode: 
      $S(x\otimes \al)=S(\al)S(x)=S(x'') S(\al)^{S(x)}$
  \end{enumerate}
\end{theorem}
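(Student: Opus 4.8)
The plan is to verify in turn the five Hopf-algebra axioms for the operations listed: associativity and unitality of the multiplication, coassociativity and counitality of the comultiplication, the bialgebra compatibility (that $\Delta$ and $\eps$ are algebra maps), and the antipode identity. Before starting I would record two structural observations that organize the whole computation. First, $x\mapsto x\otimes 1$ and $\al\mapsto 1\otimes\al$ embed $R$ and $\Rbar$ as subalgebras, and $x\otimes\al=(x\otimes 1)(1\otimes\al)$; hence the multiplication is completely determined by the straightening (cross) relation $(1\otimes\al)(y\otimes 1)=y''\otimes\al^{(y',y''')}$ read off from the stated formula, where I write $\al^{(y',y''')}$ for the functional $z\mapsto\langle\al,\,y'''zS^{-1}(y')\rangle$. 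Second, since $R$ is semisimple we have $S^2=\id$ by \thref{t:larson}, so $S^{-1}=S$ everywhere and every $S^{-1}$ in \eqref{e:double2} may be replaced by $S$; this removes the only place where invertibility of the antipode is used.

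The heart of the argument is the operation $\al\mapsto\al^y$ of \eqref{e:double2}. I would first establish its two functoriality properties. (i) It is a right action of the \emph{algebra} $R$ on the vector space $\Rbar$, that is $(\al^y)^z=\al^{yz}$: this drops out of $\Delta^2(yz)=y'z'\otimes y''z''\otimes y'''z'''$ together with the anti-multiplicativity $S(y'z')=S(z')S(y')$. (ii) A Leibniz-type rule describing how $(-)^y$ interacts with the product of $\Rbar$, obtained by expanding $\langle(\al\be)^y,z\rangle=\langle\al\be,\,y'''zS(y')\rangle$ through the coalgebra structure of $R$ (recall that the product of $\Rbar=(R^{op})^*$ is dual to $\Delta_R$). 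With these in hand, associativity reduces, after inserting the multiplication three times, to an identity of pairings forced by coassociativity of $\Delta_R$ and the antipode and counit axioms of $R$; unitality is immediate since $\al^1=\al$ and $1\otimes\eps$ acts trivially on each side. I expect this associativity check — and the closely analogous verification that $\Delta_{D(R)}$ is an algebra homomorphism — to be the main obstacle, since both force one to track several Sweedler legs at once and to invoke repeatedly the compatibility between $\Delta_R$ and the coadjoint-type action $\al\mapsto\al^y$. This compatibility is exactly the statement that $R$ and $\Rbar$ form a matched pair of Hopf algebras in the double cross product sense, so one may alternatively simply quote that machinery (see \cite{Kassel}, \cite{etingof-schiffmann}).

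The remaining axioms are comparatively routine. Coassociativity of the stated comultiplication holds because it is the tensor product of $\Delta_R$ and $\Delta_{\Rbar}$ followed by the transposition of the two middle factors, and both $\Delta_R$ and $\Delta_{\Rbar}$ are coassociative; the counit axiom follows at once from $\eps_R$ and $\eps_{\Rbar}$ being counits. Finally, for the antipode I would check $\mu\circ(S\otimes\id)\circ\Delta=\eta\eps=\mu\circ(\id\otimes S)\circ\Delta$ by writing $S(x\otimes\al)=(1\otimes S\al)(S(x)\otimes 1)$ and reducing each composite, via the cross relation above, to the separate antipode identities for $R$ and for $\Rbar$. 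The compatibility of $S$ with the action $\al\mapsto\al^y$ that makes this collapse is again a consequence of $S^2=\id$ and the anti-(co)multiplicativity of the two antipodes, so no computation genuinely new beyond the associativity analysis is required.
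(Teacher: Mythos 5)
Your outline is correct and is consistent with how the paper treats this statement: the paper gives no proof at all, simply citing \cite{Kassel} and \cite{etingof-schiffmann} for the construction, and your plan is exactly the standard direct verification (cross relation $(1\otimes\al)(y\otimes 1)=y''\otimes\al^y$, the right-action property $(\al^y)^z=\al^{yz}$, the matched-pair compatibility, and the simplification $S^{-1}=S$ coming from semisimplicity) that those references carry out. Since you also offer the alternative of quoting the double cross product machinery from the very sources the paper cites, there is no substantive divergence; the only caveat is that your hardest steps (associativity and the algebra-map property of $\Delta_{D(R)}$) remain sketched rather than executed, but the paper leaves them entirely to the literature as well.
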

\begin{remark}
This definition follows \ocite{etingof-schiffmann} and differs slightly
from the one given in \ocite{Kassel}, where $D(R)$ is defined as
$\Rbar\otimes R$. However, it is not difficult
to show that these definitions are  equivalent.
\end{remark}

Recall (see \cite{Kassel}) that $D(R)$ has  a canonical quasitriangular 
structure, with $R$-matrix $R=\sum x_\al\otimes x_\al$, where $x_\al,
x^\al$ are dual bases in $R, \Rbar$ respectively. Thus, the category
of finite-dimensional representations of
$D(R)$ has a structure of a braided ribbon category. Moreover, it is known
that the category of representations of $D(R)$ is in fact equivalent to the
so-called Drinfeld Center of the category of representations of $R$ (see
\ocites{muger1, muger2}). In particular, for any representation $Y$ of
$D(R)$ and a representation $V$ of $R$, we have a functorial isomorphism 
\begin{equation}\label{e:Rmatrix}
\begin{aligned}
    V\otimes Y&\mapsto Y\otimes V\\
    v\otimes y&\mapsto \sum_\al x^\al y\otimes  x_\al v
\end{aligned}
\end{equation}
This map (or sometimes its inverse) will be called the {\em
half-braiding}.

\begin{lemma}\label{l:haar_double}
  Let $R$ be a semisimple, finite-dimensional Hopf algebra. Then $D(R)$ is
  semisimple with Haar integral given by  
  \begin{equation}
    h_{D(R)} = h_R \otimes h_{\Rbar}.
  \end{equation}
  Moreover, both $h,\bar h $ are central in $D(R)$.
\end{lemma}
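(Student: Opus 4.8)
The plan is to establish \leref{l:haar_double} in two parts: first that $h_{D(R)}=h_R\otimes\hbar$ is indeed the Haar integral of $D(R)$, and then that each of $h_R$ and $\hbar$ (viewed in $D(R)$ via the obvious inclusions $x\mapsto x\otimes 1$ and $\al\mapsto 1\otimes\al$) is central. Semisimplicity of $D(R)$ follows immediately from \thref{t:larson}: one checks that $S_{D(R)}^2=\id$, which can be read off from the antipode formula in the definition of $D(R)$ together with $S_R^2=\id$ and $S_{\Rbar}^2=\id$ (both of which hold by \thref{t:larson}, since $R$ and $\Rbar$ are semisimple). Alternatively, the existence and uniqueness of the Haar integral that we verify directly already forces semisimplicity via the characterization in \thref{t:haar}.

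For the Haar integral itself, the cleanest route is to verify the defining conditions of \thref{t:haar} directly for the candidate $h_R\otimes\hbar$. First I would check idempotency, $(h_R\otimes\hbar)^2=h_R\otimes\hbar$, by unwinding the multiplication rule $(x\otimes\al)(y\otimes\be)=xy''\otimes\al^y\be$ with $x=y=h_R$ and $\al=\be=\hbar$. The key simplification is that $\De(h_R)$ is cyclically invariant and $h_R$ absorbs as $h_R h_R''\otimes(\cdots)$, using $xh_R=\eps(x)h_R$; the second tensor factor then reduces to $\hbar^{h_R}\hbar$, which I would show equals $\hbar$ using $\hbar^2=\hbar$ together with the fact that $\al^{h_R}$ collapses under the counit-like absorption of $h_R$ inside the twisted coadjoint expression \eqref{e:double2}. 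Second, I would verify the absorption property $(x\otimes\al)(h_R\otimes\hbar)=\eps_{D(R)}(x\otimes\al)\,(h_R\otimes\hbar)$ and its left-handed analogue; here $xh_R''\otimes\al^x\hbar$ must reduce to $\eps_R(x)\eps_{\Rbar}(\al)\,h_R\otimes\hbar$, again exploiting $x h_R=\eps_R(x)h_R$ on the left factor and the integral property of $\hbar$ on the right.

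For centrality, I would show that $h_R\otimes 1$ commutes with every generator $y\otimes\be$ of $D(R)$, and likewise for $1\otimes\hbar$. The left-hand product $(h_R\otimes 1)(y\otimes\be)=h_R y''\otimes(1)^y\be$ uses $h_R y''=\eps(y'')h_R$ applied inside $\De(y)$, collapsing the Sweedler indices so that $h_R y''\otimes(\cdots)$ becomes $h_R\otimes(\text{something in }\be)$; comparing with the reversed product $(y\otimes\be)(h_R\otimes 1)=yh_R\otimes\be^{h_R}$ and again using the absorption $yh_R=\eps(y)h_R$ and the collapse of $\be^{h_R}$ should give equality. The analogous computation for $\hbar$ uses the integral property of $\hbar$ in $\Rbar$ together with the twist \eqref{e:double2}. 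I expect the main obstacle to be the centrality of $\hbar$: unlike $h_R$, which interacts only with the first tensor factor, $\hbar$ appears entangled with the first factor through the twisted action $\al^y$ in the multiplication, so one must carefully track how $\De(h_{\Rbar})$ — equivalently, the trace formula of \leref{l:hbar} — interacts with the coadjoint-type twist, and verify that the $y'''(\,\cdot\,)S^{-1}(y')$ conjugation in \eqref{e:double2} is absorbed by the bi-invariance of $\hbar$ under both regular actions of $\Rbar$. This is where the explicit description of $\hbar$ via the regular trace becomes essential.
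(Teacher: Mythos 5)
The paper states this lemma without any proof, so there is no argument of the authors' to measure yours against; I can only assess the plan on its own merits. The first half is sound: verifying the two defining properties of the Haar integral for $h_R\otimes\hbar$ directly from the multiplication rule does work, and semisimplicity then follows from \thref{t:larson} or from the existence of a normalized two-sided integral. Two bookkeeping points should be made explicit when you write it out. In expressions like $(y\otimes\be)(h\otimes 1)=yh''\otimes\be^{h}$ the three Sweedler legs of $\De^{2}(h)$ are split between the two tensor slots, so nothing collapses formally; for left absorption the identity you actually need is $\sum x h''\,\<\al,h'''S(h')\>=\eps_R(x)\eps_{\Rbar}(\al)\,h$, which follows from cyclic invariance of $\De^{2}(h)$ together with $a'S(a'')=\eps(a)1$. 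For right absorption, the step $\<\hbar, y''zS(y')\>=\eps_R(y)\<\hbar,z\>$ is exactly where the trace formula for $\hbar$ enters, via cyclicity of the trace; you correctly identify this as essential, but it is needed here rather than (only) for centrality.

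The centrality step is a genuine gap: the computation you describe does not close, and under the natural reading the claim is false. Take $R=\CC[G]$ with $G$ nonabelian, so $h=\frac1{|G|}\sum_g g$ and $\al^g(z)=\al(gzg^{-1})$. Then $(h\otimes 1_{\Rbar})(1\otimes\al)=h\otimes\al$ while $(1\otimes\al)(h\otimes 1_{\Rbar})=\frac1{|G|}\sum_g g\otimes\al^{g}$, and these differ whenever $\al$ is not a class function; so $h\otimes 1_{\Rbar}$ is not central in $D(R)$. Dually, $1\otimes\hbar$ fails to be central when $R$ is the function algebra of a nonabelian group. What is true, and what \thref{t:commute} actually requires, is the weaker statement that $h\otimes 1$ and $1\otimes\hbar$ commute with each other, both products being the central element $h_{D(R)}=h\otimes\hbar$ (central automatically, being the Haar integral of $D(R)$). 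The identity that makes this work is $\sum h^{(2)}\otimes S(h^{(1)})h^{(3)}=h\otimes 1$, which one proves by cyclically permuting $\De^2(h)$ to get $\sum h'\otimes S\bigl((h'')''\bigr)(h'')'=h\otimes\eps(h'')1$, using $S(a'')a'=\eps(a)1$ (valid since $S^2=\id$). This is an ad-coinvariance property special to the integral; the corresponding general identity $\sum y^{(2)}\otimes S(y^{(1)})y^{(3)}=y\otimes 1$, which full centrality would force via nondegeneracy of the trace pairing, fails outside the cocommutative case. So you should replace the centrality verification by a proof of $h\hbar=\hbar h=h_{D(R)}$ and read the lemma's ``moreover'' in that weaker sense.
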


\subsection{Action of $D(R)$ on $R\otimes R$}
In this section, we define an action of $D(R)$ on $R\otimes R$. This will
be used in the future.  

\begin{lemma} \label{l:comlemma}
  For $a \in R$, $\al \in \Rbar$, define the operators 
   $p_{a}\colon R \otimes R \to R \otimes R$ and 
   $q_{\al}\colon  R \otimes R \to R \otimes R$ by 
  \begin{align*}
    p_{a}(u \otimes v) &= a'u \otimes vS(a'')\\
    q_{\al}(u \otimes v) &= \<\al, S(u'v'))u''\otimes v''
                             =\al''.u\otimes \al'.v.
  \end{align*}
  Then these operators satisfy the commutation relations of $D(R)$: 
  the map 
  \begin{align*}
             D(R)&\to \End(R\otimes R)\\
     a\otimes \al&\mapsto p_a q_\al
  \end{align*}
  is a morphism of algebras.
\end{lemma}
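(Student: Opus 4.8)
The plan is to exploit the factorization $a\ttt\al=(a\ttt 1)(1\ttt\al)$ in $D(R)$, under which the proposed map sends $a\ttt\al$ to $p_aq_\al$. Since $D(R)$ is generated as an algebra by the subalgebras $R\ttt 1\cong R$ and $1\ttt\Rbar\cong\Rbar$, it suffices to verify three facts: (a) $a\mapsto p_a$ is a morphism of algebras $R\to\End(R\ttt R)$; (b) $\al\mapsto q_\al$ is a morphism of algebras $\Rbar\to\End(R\ttt R)$; and (c) the straightening relation $q_\al\,p_b=\sum_{(b)}p_{b''}\,q_{\al^b}$ holds, matching the identity $(1\ttt\al)(b\ttt 1)=\sum_{(b)}b''\ttt\al^b$ in $D(R)$. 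Granting these, multiplicativity is forced: using (c) and then (a), (b) one gets $p_aq_\al\,p_bq_\be=p_a\bigl(\sum_{(b)}p_{b''}q_{\al^b}\bigr)q_\be=\sum_{(b)}p_{ab''}\,q_{\al^b\be}$, which is exactly the image of $(a\ttt\al)(b\ttt\be)=ab''\ttt\al^b\be$.

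Steps (a) and (b) are routine Sweedler computations. For (a), I would compute $p_ap_b(u\ttt v)=a'b'u\ttt vS(b'')S(a'')$ and use that $\De$ is an algebra map and $S$ an anti-algebra map to rewrite $a'b'=(ab)'$ and $S(b'')S(a'')=S((ab)'')$, yielding $p_{ab}$; the unit $p_{1}=\id$ is immediate. For (b), I would first record that the two displayed formulas for $q_\al$ agree by \eqref{e:comult_dual}, then note that the multiplication of $\Rbar=(R^{op})^*$ is the convolution product $\<\al\be,x\>=\<\al,x'\>\<\be,x''\>$ (the coproduct of $R^{op}$ being that of $R$), and verify $q_\al q_\be=q_{\al\be}$ by composing the operators and invoking that $S$ reverses coproducts, $\De\,S(w)=S(w'')\ttt S(w')$; the counit computation gives $q_{1_{\Rbar}}=\id$.

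The crux is (c), and this is where I expect the real bookkeeping. Write $\De^{(3)}(b)=b^{(1)}\ttt b^{(2)}\ttt b^{(3)}\ttt b^{(4)}$. On the left, $p_b$ produces $b'u\ttt vS(b'')$; applying $q_\al$ and expanding, with $\De\,S(b'')=S(b^{(4)})\ttt S(b^{(3)})$, gives coefficient $\<\al,S(b^{(1)}u'v'\,S(b^{(4)}))\>$ times output $b^{(2)}u''\ttt v''S(b^{(3)})$. Here is the one place semisimplicity enters: using $S^2=\id$ (\thref{t:larson}) I pull the antipode through, $S(b^{(1)}u'v'\,S(b^{(4)}))=b^{(4)}S(b^{(1)}u'v')$, so the coefficient becomes $\<\al,b^{(4)}S(b^{(1)}u'v')\>$. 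On the right, the same identity $S^{-1}=S$ rewrites $\<\al^b,z\>=\<\al,b'''zS(b')\>$; then $q_{\al^b}$ contributes coefficient $\<\al,b'''S(b'u'v')\>$ with output $u''\ttt v''$, and $p_{b''}$ turns the output into $(b'')'u''\ttt v''S((b'')'')$. Identifying $b'=b^{(1)}$, $(b'')'=b^{(2)}$, $(b'')''=b^{(3)}$, $b'''=b^{(4)}$ via coassociativity, the right side has coefficient $\<\al,b^{(4)}S(b^{(1)}u'v')\>$ and output $b^{(2)}u''\ttt v''S(b^{(3)})$, identical to the left. The only obstacle is keeping the four coproduct legs of $b$ correctly matched across the two computations; once the roles of the legs line up, (c) is an equality of identical expressions.
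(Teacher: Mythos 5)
Your proposal is correct and follows essentially the same route as the paper, which simply states that the lemma ``follows by explicit computation'' (citing \cite{buerschaper}) and records a pair of auxiliary identities for the action $\al.(xy)$; your generators-and-relations organization --- checking that $a\mapsto p_a$ and $\al\mapsto q_\al$ are algebra maps and verifying the straightening relation $q_\al p_b=\sum p_{b''}q_{\al^b}$ --- is precisely that explicit computation carried out in full, and your bookkeeping of the four coproduct legs of $b$ and the use of $S^{-1}=S$ both check out against the paper's formula $\<\al^y,z\>=\<\al,y'''zS^{-1}(y')\>$. In fact your writeup supplies more detail than the paper itself does.
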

\begin{proof}
  This follows by explicit computation (\cite{buerschaper}), using the
  following formulas: 
  \begin{align*}
    \al.(xy)&=x'' (\al_x . y)=(_y\al.x) y'',\qquad \text{where}\\
        \<\al_x, z\>&=\<\al, zS(x')\>\\
        \<_y \al,z\>&=\<\al, S(y')z\>
  \end{align*}
\end{proof}

\section{Kitaev's Model}
In this section, we look at Kitaev's lattice model. This model is a
generalization of the well-known toric code; we get a theory for any
finite-dimensional semisimple Hopf algebra $R$  over $\CC$. We begin with a
compact, oriented surface $\Si$ with a fixed cell decomposition $\De$.
\footnote{Some papers begin instead with a surface $\Sigma$ with an
embedded graph $\Gamma$. This is clearly equivalent data; the graph
$\Gamma$ corresponds to the 1-skeleton of the cell decomposition.}. We will
assign to $(\Si, \De)$ a finite-dimensional Hilbert space $\HK(\Si)$ and
introduce a Hamiltonian consisting of local operators. The ground state of
this Hamiltonian is useful for quantum computation. We will later show see
that this ground state can be identified with the Turaev-Viro vector space
$Z_{TV}(\Si)$. This obviously implies that the ground state is a
topological invariant of $\Si$: in particular, it does not depend on the
cell decomposition $\De$.  

From now on, we fix a choice of  finite-dimensional, semisimple Hopf
algebra $R$.

\subsection{Crude Hilbert space}\label{s:crudespace}
Given a compact oriented surface $\Si$ with a cell decomposition $\De$, we
denote by $E$ the set of (unoriented) edges of $\De$. Then for any choice
$\oo$ of orientation of each edge of $\Si$,   we define the space 
\begin{equation} 
\HK(\Sigma, \De,\oo) = \bigotimes_{E} R 
\label{e:hilbert}
\end{equation} 
We will graphically represent a vector $\bigotimes x_e\in \HK$ by writing
the vector $x_e$ next to each edge $e$. 

So defined vector space depends on the choice of orientation. However, in
fact vector spaces coming from different  orientations are canonically
isomorphic. Namely, if $\oo$ and $\oo'$ are two orientations that
differ by reversing orientation of a single edge $e$, we identify 
\begin{equation}\label{e:switch}
\begin{aligned}
\HK(\oo)&\to \HK(\oo')\\
   x_e&\mapsto S(x_e)
\end{aligned}
\end{equation}
(see \firef{f:switch}). Note that since $S^2=\id$, this isomorphism is well
defined. This shows that all spaces $\HK(\oo)$ for different choices of
orientation are canonically isomorphic to each other; thus, we will drop
the choice of orientation from our formulas writing just $\HK(\Si,\De)$.  

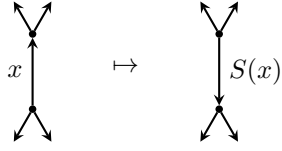
\begin{figure}[ht]
$$
\begin{tikzpicture}[baseline=0.5cm]
\node[dotnode] (A) at (0,0){};
\node[dotnode] (B) at (0,1){};
\draw[->] (A)--(B) node[pos=0.5, left]{$x$};
\draw[->] (A)-- +(-60:0.5);
\draw[->] (A)-- +(240:0.5);
\draw[->] (B)-- +(60:0.5);
\draw[->] (B)-- +(120:0.5);
\end{tikzpicture}
\qquad
\mapsto
\qquad 
\begin{tikzpicture}[baseline=0.5cm]
\node[dotnode] (A) at (0,0){};
\node[dotnode] (B) at (0,1){};
\draw[<-] (A)--(B) node[pos=0.5, right]{$S(x)$};
\draw[->] (A)-- +(-60:0.5);
\draw[->] (A)-- +(240:0.5);
\draw[->] (B)-- +(60:0.5);
\draw[->] (B)-- +(120:0.5);
\end{tikzpicture}
$$
\caption{The antipode allows us to identify the Hilbert spaces obtained via
any two choices of edge orientation.} 
\label{f:switch}
\end{figure}

The Hilbert space $\HK$ is clearly not a topological invariant; in
particular, its dimension depends on number of edges in $\De$.

\subsection{Vertex and plaquette operators}
We now define a collection of operators on $\HK$; in the next section, we
will use them to construct the Hamiltonian on $\HK$. As before, we fix a
closed oriented surface $\Si$ and a choice of cell decomposition $\De$. 

\begin{definition} \label{d:site}
Let $(\Si, \De)$ be a surface with a cell decomposition. A \textit{site}
$s$ is a pair $(v, p)$ where $v$ is a vertex of $\De$ and $p$ is an
adjacent plaquette (face). 
\end{definition}
A typical site is shown in \firef{f:site}.
We will depict a site as a green line connecting a vertex to the center of an
adjacent plaquette. Equivalently, if we superimpose the dual lattice, a
site connects a vertex in the lattice to an adjacent vertex in the dual
lattice.  
\begin{figure}[ht]
\begin{tikzpicture}
\pentagon
\draw (v1)--(v2)--(v3)--(v4)--(v5)--(v1);
\draw[green](0,0)--(v1);
\node[below] at (0,0) {$p$};
\node[above right] at (v1) {$v$};
\end{tikzpicture}

\caption{The site $s=(v,p)$ is drawn as a green line connecting $v$ and the
center of $p$} \label{f:site} 
\end{figure}
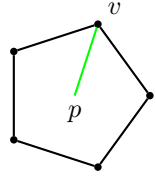

 At each vertex $v$, we have a natural counterclockwise cyclic ordering of
edges incident to $v$. Similarly, given a plaquette (2-cell)  $p$, we have 
the {\bf clockwise} cyclic ordering of edges on $\partial p$. 

\begin{definition}\label{d:vertex_op}
Given a site  $s=(v,p)$ of the cell decomposition $\De$ and an element
$a\in R$, the vertex operator $A^a_{v,p}\colon \HK(\Si,\De)\to \HK(\Si,\De)
$ is defined by  
\begin{center}
$$A^a_{v,p}\colon 
\qquad
\begin{tikzpicture}[baseline=0cm]
\node[dotnode] (v) at (0,0){}; \node[above] at (v) {$v$};
\node at (0,1) {$p$};
\draw[->] (v)-- +(40:1.3) node[pos=0.7, below right=-2pt]{$\scriptstyle
x_n$};
\draw[->] (v)-- +(-30:1.3);
\draw[->] (v)-- +(-90:1.3);
\draw[->] (v)-- +(-150:1.3) node[pos=0.7, above left=-2pt]{$\scriptstyle
x_2$};
\draw[->] (v)-- +(140:1.3) node[pos=0.7, above right=-2pt]{$\scriptstyle
x_1$};
\end{tikzpicture}
\qquad
\mapsto
\qquad
\begin{tikzpicture}[baseline=0cm]
\node[dotnode] (v) at (0,0){}; \node[above] at (v) {$v$};
\draw[->] (v)-- +(40:1.3) node[pos=0.7, below right=-2pt]{$\scriptstyle
a^{(n)}x_n$};
\draw[->] (v)-- +(-30:1.3);
\draw[->] (v)-- +(-90:1.3);
\draw[->] (v)-- +(-150:1.3) node[pos=0.7, above left=-2pt]{$\scriptstyle
a^{(2)}x_2$};
\draw[->] (v)-- +(140:1.3) node[pos=0.7, above right=-2pt]{$\scriptstyle
a^{(1)}x_1$};
\end{tikzpicture}
$$
\end{center}
where the edges incident to $v$ are indexed {\bf counterclockwise} starting
from $p$.
\end{definition}
In the definition above, the  edges incident to $v$ are all pointing away
from the vertex.  It is easy to see, using \eqref{e:switch},
that if any edge is oriented towards  the vertex, then
left action would be replaced by the right action: instead of
$a^{(i)}x_i$, we would have
$x_iS(a^{(i)})$.

In a similar way, one defines the plaquette operators. 

\begin{definition}\label{d:plaquette_op}
  Given  a site  $s=(v,p)$ of the cell decomposition
  $\De$ and an element $\al\in \Rbar$,   the plaquette  operator
  $B^{\al}_{p,v}\colon \HK(\Si,\De)\to \HK(\Si,\De) $ is defined by  
  \begin{align*}
   B^\al_{p,v}\colon 
  \qquad
  \begin{tikzpicture}[baseline=0cm]
  \node at (0, 0) {$p$};
  \pentagon
  \node[above right] at (v1) {$v$};
  \draw[->] (v1) -- (v2) node[pos=0.5, right]{$\scriptstyle x_n$};
  \draw[->] (v2) -- (v3);
  \draw[->] (v3) -- (v4); 
  \draw[->] (v4) -- (v5)  node[pos=0.5, left]{$\scriptstyle x_2$}; 
  \draw[->] (v5) -- (v1) node[pos=0.5, above left=-2pt]{$\scriptstyle
x_1$};
  \end{tikzpicture}
  \qquad
  &\mapsto
  \begin{tikzpicture}[baseline=0cm]
  \node at (0, 0) {$p$};
  \pentagon
  \node[above right] at (v1) {$v$};
  \draw[->] (v1) -- (v2) node[pos=0.5, right]{$\scriptstyle \al^{(n)}.
x_n$};
  \draw[->] (v2) -- (v3);
  \draw[->] (v3) -- (v4); 
  \draw[->] (v4) -- (v5) node[pos=0.5, left]{$\scriptstyle \al^{(2)}.
x_2$}; 
  \draw[->] (v5) -- (v1) node[pos=0.5, above left=-2pt]
    {$\scriptstyle \al^{(1)}.x_1$};
  \end{tikzpicture}
  \\
  &=
  \<\al, S(x'_n\dots x'_1)\>\quad 
  \begin{tikzpicture}[baseline=0cm]
  \node at (0, 0) {$p$};
  \pentagon
  \node[above right] at (v1) {$v$};
  \draw[->] (v1) -- (v2) node[pos=0.5, right]{$\scriptstyle x''_n$};
  \draw[->] (v2) -- (v3);
  \draw[->] (v3) -- (v4); 
  \draw[->] (v4) -- (v5) node[pos=0.5, left]{$\scriptstyle  x''_2$}; 
  \draw[->] (v5) -- (v1) node[pos=0.5, above left=-2pt]
    {$\scriptstyle x''_1$};
  \end{tikzpicture}
\end{align*}
where $\al.x$ stands for left action of $\Rbar$ on $R$ as defined in
\seref{s:regular}. 
\end{definition}

In the definition above, the edges surrounding $p$ are all given a
{\bf clockwise} orientation (even though the indices go counterclockwise).
It is easy to see, using \eqref{e:switch},
that if any edge is oriented counterclockwise, then left action would be
replaced by the right action: instead of $\al^{(i)}.x_i$, we would have
$x_i.S(\al^{(i)})$.

\begin{theorem}\label{t:com_relations}\par\noindent
    \begin{enumerate}
        \item If $v,w$ are distinct vertices, then the operators 
        $A_v^a$, $A^b_{w}$ commute for any $a, b\in R$. 
        
        Similarly, if $p,q$ are distinct plaquettes, then the operators 
        $B_p^\al$, $B_q^\be$ commute for any $\al, \be\in \Rbar$. 
        \item If $v$, $p$ are not incident to one another, then operators $A_v^a$,
$B_p^\al$ commute. 
        
        \item For a given site $s=(v,p)$,  the operators $A_{v,p}^a$,
             $B_{p,v}^\al$ satisfy
            the commutation relations of Drinfeld double of $R$: the map
      \begin{align}
          \rho_s\colon D(R)&\to \End(\HK(\Si,\De))\\
          a\otimes\al&\mapsto A_v^a B_p^\al
      \end{align}
      is an algebra morphism. 
 \end{enumerate}
\end{theorem}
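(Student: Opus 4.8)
The plan is to prove the three assertions separately, disposing of (1) and (2) by a support argument and reducing (3) to the local computation of \leref{l:comlemma}. For (1), two vertex operators $A_v^a$, $A_w^b$ with $v\ne w$ modify only the edges incident to $v$ and to $w$; on an edge they do not share they act on disjoint tensor factors of $\HK$ and commute trivially, and on a shared edge $e$ joining $v$ to $w$ the operator $A_v^a$ acts by a comultiplication component of the left regular action while $A_w^b$ acts by the right regular action, and these commute by the remark in \seref{s:regular}. Since the comultiplication factors of $a$ and of $b$ distributed to the various edges are mutually independent, $A_v^aA_w^b$ and $A_w^bA_v^a$ produce the same element on every edge. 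The case of two plaquette operators is identical, using instead the two commuting actions of $\Rbar$ on $R$ from \seref{s:regular} on a shared edge. For (2), if $v$ is not a vertex of $p$ then no edge of $\partial p$ is incident to $v$, because both endpoints of an edge of $\partial p$ are vertices of $p$; hence $A_v^a$ and $B_p^\al$ act on disjoint tensor factors and commute.

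The content is (3). I would first record the two diagonal relations: $a\mapsto A_v^a$ is an algebra map $R\to\End(\HK)$, and $\al\mapsto B_p^\al$ is an algebra map $\Rbar\to\End(\HK)$. The former is immediate since iterated comultiplication $\De^{(m-1)}\colon R\to R^{\otimes m}$ is an algebra homomorphism, so $A_v^aA_v^b$ multiplies the $i$-th edge by $a^{(i)}b^{(i)}=(ab)^{(i)}$; the latter holds because, by \deref{d:plaquette_op}, $B_p^\al$ is the left action of $\al\in\Rbar$ on the edges of $\partial p$ (in the sense of \leref{l:hbar2}), which is a genuine representation of $\Rbar$. Granting these, all of (3) follows from the single straightening identity
\begin{equation}\label{e:cross}
B_p^\al\,A_v^b = A_v^{b''}\,B_p^{\al^b},
\end{equation}
with $\al^b$ as in \eqref{e:double2}: combining \eqref{e:cross} with the diagonal relations gives
$$
A_v^aB_p^\al\,A_v^bB_p^\be
= A_v^aA_v^{b''}\,B_p^{\al^b}B_p^\be
= A_v^{ab''}\,B_p^{\al^b\be}
= \rho_s\big((a\otimes\al)(b\otimes\be)\big),
$$
while $\rho_s(1_{D(R)})=\id$ is clear from $A_v^1=\id$ and $B_p^{\eps}=\id$; hence $\rho_s$ is an algebra morphism.

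It remains to prove \eqref{e:cross}, which I would do by localizing to the site $s=(v,p)$. The key geometric observation is that the only edges on which both $A_v^b$ and $B_p^\al$ act are the two edges of $\partial p$ incident to $v$, and these are cyclically extremal in both orderings: they bracket the sector $p$, so they occur first and last in the counterclockwise ordering at $v$, and they are the edges labeled $x_1,x_n$ along $\partial p$. One of the two is outgoing at $v$ and the other incoming, so on this pair $A_v^b$ acts exactly as the operator $p_b$ of \leref{l:comlemma} (left multiplication by a component of $b$ on the outgoing edge, right multiplication by $S$ of a component on the incoming edge), while the remaining comultiplication factors of $b$ act on edges of $v$ outside $p$ and simply pass through $B_p^\al$. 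Likewise $B_p^\al$ restricted to these two edges is the operator $q_\al$ of \leref{l:comlemma}, the intermediate edges $x_2,\dots,x_{n-1}$ of $\partial p$ entering only as a fixed spectator element inside the pairing $\<\al,S(x_n'\cdots x_1')\>$. Thus \eqref{e:cross} is the cross relation $q_\al\,p_b=p_{b''}\,q_{\al^b}$ underlying \leref{l:comlemma}, proved by the same computation, using the peeling identities $\al.(xy)=x''(\al_x.y)=({}_y\al.x)y''$ recorded there.

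The main obstacle is the bookkeeping in \eqref{e:cross}. One must verify that the two shared edges genuinely decouple from the spectators: that the comultiplication factors of $b$ on the non-plaquette edges commute past $B_p^\al$, and that the product $x_n'\cdots x_1'$ inside the pairing interacts with $b$ only through its outermost factors $x_n'$ and $x_1'$, with $x_{n-1}'\cdots x_2'$ inert. One must also track orientations with care, since reversing an edge trades the left action for the right action and inserts an antipode via \eqref{e:switch}, so that the precise comultiplication indices of $b$ landing on the incoming versus outgoing shared edge must be matched against the three factors $b',b'',b'''$ appearing in $\al^b$ and in $A_v^{b''}$. Finally, degenerate incidences (a loop at $v$, or $v$ occurring more than once on $\partial p$) should be checked separately, although they do not alter the local two-edge model.
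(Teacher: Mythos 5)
Your proof is correct and follows essentially the same route as the paper: a disjoint/commuting-support argument for (1) and (2), and for (3) a reduction to a local Drinfeld-double computation at the site involving the two shared edges together with the spectator edges at $v$ and along $\del p$. The bookkeeping you single out as the main obstacle is precisely what the paper packages into \leref{l:comlemma2}, where the spectators are bundled into a representation $X$ of $R$ and a representation $Y$ of $\Rbar$; your decomposition into the two diagonal relations plus the cross relation $B_p^\al A_v^b=A_v^{b''}B_p^{\al^b}$ is the standard way to verify that lemma, and your explicit attention to degenerate incidences is a point the paper leaves implicit.
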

\begin{proof}
 \begin{enumerate}
\item  The operators $A_v$, $A_w$ obviously commute if the edges incident
to $v$ and those incident to $w$ are disjoint. We therefore assume that $v$
and $w$ are adjacent, i.e. at least one edge connects them. Clearly, we
need only to check that the actions of $A_{v}$, $A_{w}$ commute on their
common support. Suppose such an edge $e$ is oriented so that it points from
$v$ to $w$. Then $A_{v}$ acts on the corresponding copy of $R$  via the
left regular representation, and $A_{w}$ acts on $e$ via the right regular
representation. These are obviously commuting actions. The proof for
plaquette operators is similar. 
\item Obvious.
\item Follows from the following generalization of  \leref{l:comlemma},
proof of which we leave to the reader. 
\begin{lemma} \label{l:comlemma2}
  Let $X$ be a representation of $R$, and $Y$ -- a representation of $\Rbar$.  
  For $a \in R$, $\al \in \Rbar$, define the operators 
   $p_{a}, q_\al\in \End( R \otimes X\otimes Y \otimes R)$ by 
  \begin{align*}
    p_{a}(u \otimes x\otimes y \otimes  v) &= a'u \otimes a'' x\otimes y \otimes  v S(a''')\\
    q_{\al}(u \otimes x\otimes y \otimes  v) &
     = \al'''.u \otimes x\otimes \al''. y \otimes  \al'.v 
  \end{align*}
  Then these operators satisfy the commutation relations of $D(R)$: 
  the map 
  \begin{align*}
             D(R)&\to \End(R\otimes X\otimes Y\otimes R)\\
     a\otimes \al&\mapsto p_a q_\al
  \end{align*}
  is a morphism of algebras.
\end{lemma}

\end{enumerate}
    
\end{proof}

\subsection{Duality}
The $A$ and $B$ projectors are dual to one another in the following sense.
Consider a \textit{dual} theory, in which we begin with the dual cell
decomposition  $\De^*$ with edge orientation inherited from $\De$ as shown
in \firef{f:edgeconvention} and the dual Hopf algebra $\Rbar$. 
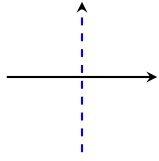
\begin{figure}[ht]
\begin{tikzpicture}[baseline=0cm]
\draw[->] (-1,0) -- (1,0);
\draw[dual, ->] (0,-1)--(0,1);
\end{tikzpicture}
\caption{The convention for orienting edges of the dual graph is shown
above. Here the solid black edge is from the original cell decomposition 
$\De$ and the dashed blue arrow belongs to the dual one $\De^*$.} 
\label{f:edgeconvention}
\end{figure}
 
 We get the  Hilbert space $\ov{\HK}$ which may be identified with $\HK^*$
using the evaluation pairing $\ev\colon \Rbar \otimes R \to \CC$. Note that
the vertices in $\De^*$ correspond to plaquettes in $\De$ and vice versa.
The following lemma shows that the vertex operators from one theory
correspond naturally to the plaquette operators from the other. 

\begin{lemma} \label{l:a2b2}
Under the natural pairing $\< , \>$ of $\ov{\HK}$ and $\HK$, we have 
\begin{equation}
\begin{aligned}
    \<y, B_s^\al x\>&=\<A^{S(\al)}y,x\>\\
    \<y, A_s^a x\>&=\<B^a_s y, x\>
\end{aligned}
\end{equation}
where $x\in \HK$, $y\in \ov{\HK}$,  $a \in R, \al\in \Rbar$ and $s$ is a
site (both in $\De$ and $\De^*$). 
\end{lemma}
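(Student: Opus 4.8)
The plan is to reduce each identity to a single-edge adjunction and then patch the local computations together using the combinatorial duality between $\De$ and $\De^*$.

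First I would make the pairing explicit. Since each edge $e$ of $\De$ crosses exactly one edge of $\De^*$, the two spaces factor as $\HK=\bigotimes_e R$ and $\ov{\HK}=\bigotimes_e\Rbar$, and the identification $\ov{\HK}\simeq\HK^*$ via $\ev\colon\Rbar\otimes R\to\CC$ is simply the tensor product of the edge-wise evaluation pairings; that is, $\<y,x\>=\prod_e\<y_e,x_e\>$ for $y=\bigotimes y_e$ and $x=\bigotimes x_e$. Next I would localize: both $B^\al_{p,v}$ and the dual vertex operator $A^{S(\al)}_{p,v}$ (which acts at the vertex $p$ of $\De^*$) are supported precisely on the edges bordering $p$, equivalently on the dual edges incident to the dual vertex $p$, and act as the identity on every other tensor factor. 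By the factorization of the pairing it therefore suffices to check the first identity on the tensor product of the edge-spaces around $p$, i.e. to prove $\prod_i\<y_i,\al^{(i)}.x_i\>=\<A^{S(\al)}y,x\>$ on that local factor.

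The engine of the local computation is the single-edge adjunction $\<\la,\al.x\>=\<S(\al)\la,x\>$ recorded in \seref{s:regular}. Applying it edge by edge rewrites each factor $\<y_i,\al^{(i)}.x_i\>$ as $\<S(\al^{(i)})y_i,x_i\>$, so that left multiplication by $S(\al^{(i)})$ now acts on the dual label $y_i\in\Rbar$. It then remains to recognize the resulting operator $y_i\mapsto S(\al^{(i)})y_i$, summed over the comultiplication components in the correct order, as the dual vertex operator $A^{S(\al)}_{p,v}$. This is where the bookkeeping lives: the edges bordering $p$ carry the clockwise cyclic order with the opposite comultiplication $\De^{(n-1)}_{\Rbar}(\al)=\al^{(n)}\dotimes\al^{(1)}$ of $\Rbar$, whereas the dual vertex operator reads its incident edges in the counterclockwise order of $\De^*$. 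I would check that the reversal of cyclic order between these two conventions is exactly compensated by the fact that $S$ is a coalgebra anti-homomorphism, so that $\De^{(n-1)}_{\Rbar}(S(\al))$ is the componentwise antipode of $\De^{(n-1)}_{\Rbar}(\al)$ with the order reversed; combined with the edge-orientation rule from \eqref{e:switch} (an edge pointing the "wrong" way replaces a left action by a right one, producing the extra $S$), this matches the two operators on the nose.

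The second identity $\<y,A^a_s x\>=\<B^a_s y,x\>$ would be proven by the same localization, now using the adjunction for left multiplication: the pairing–comultiplication duality \eqref{e:comult_dual} gives $\<\la,ax\>=\<\la',a\>\<\la'',x\>$, so the transpose of $x\mapsto ax$ is the action of $\ov{\ov R}\simeq R$ on $\Rbar$ which, by the definitions in \seref{s:regular}, is the one entering the dual plaquette operator $B^a$; note that here no antipode appears, reflecting the fact that the $\Rbar$-action $\al.x$ carries an $S$ in its very definition while multiplication in $R$ does not. The main obstacle is precisely the combinatorial and orientation bookkeeping of the third step: matching the two cyclic orderings and verifying that the opposite comultiplication of $\Rbar$, the antipode in $A^{S(\al)}$, and the local edge-orientation conventions all cancel uniformly, for every possible pattern of edge orientations around $v$ and $p$. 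Once the single-edge adjunctions are in hand the analytic content is trivial, and the entire difficulty is in getting the conventions to line up.
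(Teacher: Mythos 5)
Your proposal is correct and follows essentially the same route as the paper's proof: localize to the edges around the site, apply the single-edge adjunction $\<\la,\al.x\>=\<S(\al)\la,x\>$ factor by factor, and use the anti-comultiplicativity of $S$ together with the opposite comultiplication of $\Rbar$ to reindex the components and recognize the dual vertex operator $A^{S(\al)}$. The paper's proof is exactly this computation written out in four lines, with the second identity likewise left as "proved similarly."
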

\begin{proof}
Let $s$ be a site; label edges of $\De, \De^*$ around $s$ as shown in
\firef{f:duality1}. 

\begin{figure}[ht]
\begin{tikzpicture}
\pentagon
\draw[->] (v1)--(v2) node[pos=0.8,above right=-2pt] {$\scriptstyle x_n$};
\draw[->] (v2)--(v3);
\draw[->] (v3)--(v4);
\draw[->] (v4)--(v5) node[pos=0.8,left=-2pt] {$\scriptstyle x_2$};
\draw[->] (v5)--(v1) node[pos=0.8,above left=-2pt] {$\scriptstyle x_1$};
\draw[green] (v1)--(0,0);
\draw[dual, ->] (0,0) -- (36:2cm) node[pos=0.8,above left=-2pt]
{$\scriptstyle y_n$};
\draw[dual, ->] (0,0) -- (108:2cm) node[pos=0.8,above right=-2pt]
{$\scriptstyle y_1$};
\draw[dual, ->] (0,0) -- (180:2cm) node[pos=0.8,above =-2pt]
{$\scriptstyle y_2$};
\draw[dual, ->] (0,0) -- (-36:2cm);
\draw[dual, ->] (0,0) -- (-108:2cm);
\end{tikzpicture}
\caption{Proof of duality formulas}\label{f:duality1}
\end{figure}
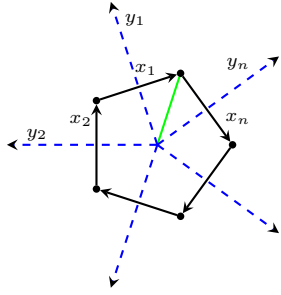
Then 
\begin{align*}
\<y, B^\al_s x\>&=\<y_1,\al^{(1)}.x_1\>\dots\<y_n, \al^{(n)}x_n\>\\
&=\<S(\al^{(1)})y_1, x_1\>\dots \<S(\al^{(n)})y_n, x_n\>\\
&=\<S(\al)^{(n)}y_1, x_1\>\dots \<S(\al)^{(1)}y_n, x_n\>\\
&=\<A^{S(\al)}_sy,x\>
\end{align*}
(recall that comultiplication in $\Rbar$ is given by $\De^{(n-1)}_\Rbar
\be=\be^{(n)}\dotimes\be^{(1)}$.)

The second identity is proved similarly. 
\end{proof}

\subsection{The groundspace}

Let $\HK(\Si,\De)$ be as in \seref{s:crudespace}. Consider the following 
 special case of the vertex and plaquette operators:
 \begin{equation}\label{e:vertex2}
 \begin{aligned}
 A_v&=A_v^h\\
 B_p&=B_p^{\bar h}
 \end{aligned}
 \end{equation}
 where $h\in R$, $\bar h\in \Rbar$ are the Haar integrals of $R, \Rbar$.
 
Note that since $\De^n (h)$ is cyclically invariant (see \thref{t:haar}),
 the operator $A_v$ only depends on the vertex $v$ and not on the choice of
the adjacent plaquette $p$ (which was used before to construct the linear
ordering of the edges adjacent to $v$); similarly,  $B_p$ only depends on
the choice of $p$. 

Using these operators, we define the Hamiltonian $H\colon \HK\to \HK$ by 
\begin{equation} \label{e:hamiltonian}
H = \displaystyle \sum_{v}(1 - A_{v}) + \displaystyle \sum_{p}(1 - B_{p})
\end{equation}

The most important property of this Hamiltonian is that it consists of {\em
commuting} operators. 

\begin{theorem} \label{t:commute}
\par\noindent
  \begin{enumerate}
    \item All operators $A_v$, $B_p$ commute with each other.
    \item Each of these operators is idempotent: $A_v^2=A_v$, $B_p^2=B_p$.
  \end{enumerate}
\end{theorem}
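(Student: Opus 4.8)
The plan is to prove both claims by reducing everything to the commutation relations already established in Theorem~\ref{t:com_relations} together with the idempotency of the Haar integrals, which follows from Corollary~\ref{c:haar}. The key observation is that $A_v = A_v^h$ and $B_p = B_p^{\bar h}$ are specializations of the general vertex and plaquette operators at the Haar integrals, so almost all structural facts we need are already in hand.

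For part (1), I would split the analysis by the relative position of the two operators, matching the three cases of Theorem~\ref{t:com_relations}. When $v \ne w$ are distinct vertices, $A_v = A_v^h$ and $A_w = A_w^h$ commute immediately by Theorem~\ref{t:com_relations}(1); the same theorem handles two distinct plaquette operators $B_p, B_q$. When a vertex $v$ and plaquette $p$ are non-incident, Theorem~\ref{t:com_relations}(2) gives commutativity of $A_v^h$ and $B_p^\alpha$ for all $\alpha$, hence in particular for $\alpha = \bar h$. The only genuinely nontrivial case is an incident pair $s = (v,p)$, where Theorem~\ref{t:com_relations}(3) tells us that $a \otimes \alpha \mapsto A_v^a B_p^\alpha$ is an algebra map $D(R) \to \End(\HK)$. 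Here I would invoke Lemma~\ref{l:haar_double}: the element $h_{D(R)} = h_R \otimes h_{\bar R}$ is the Haar integral of $D(R)$, and crucially \emph{both} $h$ and $\bar h$ are central in $D(R)$. Centrality of $h \otimes 1$ and $1 \otimes \bar h$ in $D(R)$, transported through the algebra morphism $\rho_s$, yields that $A_v^h = \rho_s(h \otimes 1)$ and $B_p^{\bar h} = \rho_s(1 \otimes \bar h)$ commute as operators on $\HK$.

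For part (2), idempotency of each operator follows from the idempotency of the corresponding Haar integral. By Corollary~\ref{c:haar}, $h_n = \Delta^{n-1}(h)$ acts as a projection on any tensor product of representations, and since $A_v^h$ is by Definition~\ref{d:vertex_op} precisely the action of the coproduct $\Delta^{(n-1)}(h)$ on the edges incident to $v$, we get $A_v^2 = A_v$ directly. Equivalently, one can argue algebraically: $h^2 = h$ is part of the defining property of the Haar integral, and applying the algebra map $a \mapsto A_v^a$ (the $\alpha = \epsilon$ restriction of $\rho_s$) gives $A_v^2 = A_v^{h \cdot h} = A_v^{h} = A_v$. The plaquette case is identical, using that $\bar h$ is the Haar integral of $\bar R$ and hence $\bar h^2 = \bar h$, together with the algebra-homomorphism property $B_p^\alpha B_p^\beta = B_p^{\alpha\beta}$ coming from $\rho_s$.

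The main obstacle, and the only place requiring real care, is the incident case of part (1). The subtlety is that $A_v^h$ and $B_p^{\bar h}$ are \emph{not} images of commuting elements of $R$ and $\bar R$ under two independent algebra maps — they interact through the cross relations of the Drinfeld double defined in \eqref{e:double2}, so one cannot simply say ``they act on disjoint data.'' The clean resolution is to work inside $D(R)$ itself: reduce the desired operator identity $A_v^h B_p^{\bar h} = B_p^{\bar h} A_v^h$ to the algebra identity $(h \otimes 1)(1 \otimes \bar h) = (1 \otimes \bar h)(h \otimes 1)$ in $D(R)$, which is exactly the content of the centrality statement in Lemma~\ref{l:haar_double}. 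Once the problem is framed this way the proof is immediate, so the entire weight of the argument rests on correctly identifying $A_v$ and $B_p$ as $\rho_s$-images of the central Haar integrals and citing Lemma~\ref{l:haar_double}.
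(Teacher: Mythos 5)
Your proof is correct and follows essentially the same route as the paper, which disposes of the theorem by citing \thref{t:com_relations} together with $h^2=h$ and centrality of the Haar integrals. If anything, you are more precise than the paper: the incident-site case really does rest on the centrality of $h\otimes 1$ and $1\otimes\bar h$ in $D(R)$ (\leref{l:haar_double}) pushed through the algebra morphism $\rho_s$, exactly as you say.
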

\begin{proof}
Immediately follows from \thref{t:com_relations} and $h^2=h$, $h$ is
central  (\thref{t:haar}). 
\end{proof}

The Hamiltonitan \eqref{e:hamiltonian} is a sum of these local projectors
and since they all commute, $H$ is diagonalizable.  
\begin{definition}\label{d:groundspace}
The {\em ground state} $K_{R}(\Si,\De)$ of Kitaev's model  is the zero
eigenspace of $H$:  
$$
  K_{R}(\Si,\De) = \{x \in \HK(\Si,\De)| Hx = 0\}.
$$
\end{definition}
It is easy to see that $x \in K_{R}(\Sigma)$ iff $A_{v}x = B_{p}x
= x$ for every vertex $v$ and plaquette $p$. 

We will show below that up to a canonical isomorphism, the groundspace does
not depend on the choice of the cell decomposition.

\section{Turaev--Viro and Levin-Wen models}
In this section, we give an overview of two other theories: Turaev--Viro
and Levin-Wen (stringnet) models. All results of this section are  known and
given here just for the readers convenience. 

We will mostly follow the approach and notation of our earlier papers
\ocites{balsam-kirillov, stringnet}, to which the reader is referred for
more detail and references.  

Throughout the section, we let  $\A$ be a spherical fusion category, i.e. a
fusion category together with a functorial isomorphism $V\simeq V^{**}$
satisfying appropriate properties. We will denote by $\{V_i, i\in I\}$ the
set of representatives of isomorphism classes of simple objects in $\A$,
and by $d_i\dim V_i$ the categorical dimension of $V_i$. We will also use
the notation $\DD=\sqrt{\sum d_i^2}$.

Note that for every semisimple finite-dimensional Hopf algebra $R$, the
category $\A=\Rep(R)$ of finite-dimensional representations of $R$ is a
spherical fusion category, and the notation $V_i, d_i$ agree with the
notation of \seref{s:hopf}. In this case, $\DD^2=\dim R$. 

\subsection{Turaev--Viro model}
Let $\A$ be a spherical fusion category as above. Then one can define a
3-dimensional  TQFT $Z_{TV}^\A$, called
the Turaev--Viro model; it was originally defined in \ocite{TV} and
generalized to arbitrary spherical categories by Barrett and Westbury
\ocite{barrett}. In particular, for
any closed oriented surface $\Si$ this theory gives a vector space
$Z_{TV}^\A(\Si)$, defined as follows.  

First, we choose a cell decomposition $\De$ of $\Si$. A coloring of edges
of $\De$ is a choice, for every {\bf oriented}  edge $\ee$ of $\De$ of a
simple object $l(\ee)$ so that $l(\ov{\ee})=l(\ee)^*$.

We define the state space  
$$
\HTV(\Si,\De)=\bigoplus_{l}\bigotimes_C H(C,l)
$$
where $l$ is a coloring of edges of $\De$, $C$
is a 2-cell of $\De$, and
\begin{equation}
H(C,l)=\<l(e_1), l(e_2),\dots,
l(e_n)\>,\qquad
 \del C=e_1\cup e_2\dots\cup e_n
\end{equation}
where the edges $e_1,\dots, e_n$ are taken in the counterclockwise order on
$\del C$ as shown in \firef{f:state_space1}.
\begin{figure}[ht]
\begin{tikzpicture}
\node at (0,0) {$C$};
\foreach \i in {1,...,5}
   {
    \pgfmathsetmacro{\u}{\i*72}
    \pgfmathsetmacro{\v}{\u+72}
    \draw[->] (\u:1.2cm)--(\v:1.2cm) node[pos=0.5,auto=right]{$X_\i$};
    }
\end{tikzpicture}
\qquad\qquad $H(C)=\<X_1, \dots,  X_n\>
=\Hom_\A(\one, X_1\otimes\dots\otimes X_n)$
\caption{State space for a cell}\label{f:state_space1}
\end{figure}
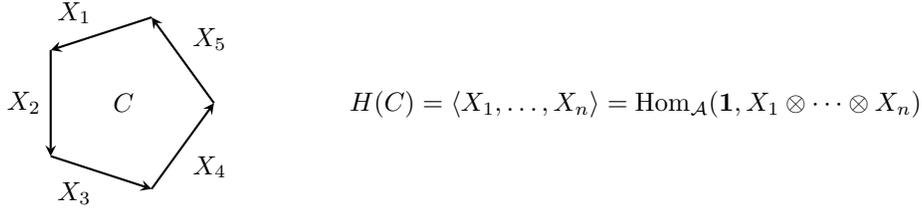

Next, given a cobordism $M$ between two surfaces $\Si, \Si'$ with cell
decompositions, one can define an operator $Z(M)\colon \HTV(\Si, \De)\to
\HTV(\Si',\De')$; it is defined using a cell decomposition of $M$ but can
be shown to be independent of the choice of the decomposition (see
\ocite{balsam-kirillov}*{Theorem 4.4}). In
particular, taking $M=\Si\times I$, we get an operator $Z(\Si\times
I)\colon \HTV(\Si,\De)\to\HTV(\Si,\De)$ which can be shown to be a
projector. We now define the Turaev--Viro space associated to $\Si$ as
\begin{equation}\label{e:ZTV}
\ZTV^\A(\Si, \De)=\im(Z(M\times I)).
\end{equation}
It can be shown that for any two cell decompositions $\De,\De'$ of the same
surface $\Si$, we have a canonical isomorphism $\ZTV^\A(\Si, \De)\simeq
\ZTV^\A(\Si, \De')$ (see \ocite{balsam-kirillov}*{}); thus, this space is
determined just by the surface $\Si$. Therefore, we will omit $\De$ in the
notation, writing just $\ZTV^\A(\Si)$.

\subsection{Stringnet model}
There is also another way of constructing a vector space associated to an
oriented closed surface $\Si$; this construction was introduced in the
papers of Levin and Wen \ocites{levin-wen}. We will refer to it as
stringnet model (or sometimes  as Levin-Wen model ). In
this section we give an overview of this model, following the conventions
of \ocite{stringnet}. 

In this model, we again begin with a spherical fusion category $\A$ and
consider colored graphs $\Ga$ on $\Si$. Edges of the graph should be
oriented and colored by objects of $\A$ (not necessarily simple); vertices
are colored by morphisms $\psi_v\in \Hom_\A(\one, V_1\otimes\dots\otimes
V_n)$, where $V_i$ are colors of edges incident to $v$ taken in
counterclockwise order and with  outward orientation (if some edges
come with inward orientation, the corresponding $V_i$ should be replaced by
$V_i^*$).  

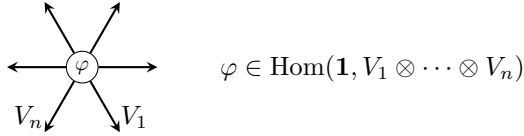
\begin{figure}[ht]
\begin{tikzpicture}
\node[morphism] (ph) at (0,0) {$\ph$};
\draw[->] (ph)-- +(240:1cm) node[pos=0.7, left] {$V_n$} ;
\draw[->] (ph)-- +(180:1cm);
\draw[->] (ph)-- +(120:1cm);
\draw[->] (ph)-- +(60:1cm);
\draw[->] (ph)-- +(0:1cm);
\draw[->] (ph)-- +(-60:1cm) node[pos=0.7, right] {$V_1$};
\end{tikzpicture}
\qquad 
$\ph\in\Hom(\one, V_1\dotimes V_n)$
\caption{Labeling of colored graphs}\label{f:coloring}
\end{figure}

We will follow the conventions of \ocite{stringnet}; in particular, if a
graph contains a pair of vertices, one with outgoing edges labeled
$V_1,\dots, V_n$ and the other with edges labeled $V_n^*,\dots, V_1^*$ ,
and the vertices  are labeled by the same letter $\al$  (or $\be$, or
\dots) it will stand for summation over the dual bases:
\begin{equation}\label{e:summation_convention}
\begin{tikzpicture}
\node[morphism] (ph) at (0,0) {$\al$};
\draw[->] (ph)-- +(240:1cm) node[pos=0.7, left] {$V^{*}_1$} ;
\draw[->] (ph)-- +(180:1cm);
\draw[->] (ph)-- +(120:1cm);
\draw[->] (ph)-- +(60:1cm);
\draw[->] (ph)-- +(0:1cm);
\draw[->] (ph)-- +(-60:1cm) node[pos=0.7, right] {$V^{*}_n$};
\node[morphism] (ph') at (3,0) {$\al$};
\draw[->] (ph')-- +(240:1cm) node[pos=0.7, left] {$V_n$} ;
\draw[->] (ph')-- +(180:1cm);
\draw[->] (ph')-- +(120:1cm);
\draw[->] (ph')-- +(60:1cm);
\draw[->] (ph')-- +(0:1cm);
\draw[->] (ph')-- +(-60:1cm) node[pos=0.7, right] {$V_1$};
\end{tikzpicture}
\quad \mathrel{\mathop:}= \sum_\al\quad
\begin{tikzpicture}
\node[morphism] (ph) at (0,0) {$\ph_\al$};
\draw[->] (ph)-- +(240:1cm) node[pos=0.7, left] {$V^{*}_1$} ;
\draw[->] (ph)-- +(180:1cm);
\draw[->] (ph)-- +(120:1cm);
\draw[->] (ph)-- +(60:1cm);
\draw[->] (ph)-- +(0:1cm);
\draw[->] (ph)-- +(-60:1cm) node[pos=0.7, right] {$V^{*}_n$};
\node[morphism] (ph') at (3,0) {$\ph^\al$};
\draw[->] (ph')-- +(240:1cm) node[pos=0.7, left] {$V_n$} ;
\draw[->] (ph')-- +(180:1cm);
\draw[->] (ph')-- +(120:1cm);
\draw[->] (ph')-- +(60:1cm);
\draw[->] (ph')-- +(0:1cm);
\draw[->] (ph')-- +(-60:1cm) node[pos=0.7, right] {$V_1$};
\end{tikzpicture}
\end{equation}
where $\ph_\al\in \<V_1,\dots, V_n\>$, $\ph^\al\in \<V_n^*,\dots, V_1^*\>$
are dual bases with respect to pairing \eqref{e:pairing}.

We then define the stringnet space 
$$
\Hs(\Si)=\text{Formal linear combinations of  colored graphs on $\Si$}/
         \text{Local relations}
$$
Local relations come from embedded disks in $\Si$; the precise definition can
be found in \ocite{stringnet}. Here we only give one local relation which
will be useful in the future:
\begin{equation}\label{e:local_rel}
\sum_{i\in \Irr(\A)} d_i 
\begin{tikzpicture}
\node[morphism] (ph) at (0,-0.5) {$\al$};
\node[morphism] (psi) at (0,0.5) {$\al$};
\node at (0,1.1) {$\dots$};
\node at (0,-1.1) {$\dots$};
\draw[->] (ph)-- +(-110:1cm) node[pos=1.0,left,scale=0.8]
{$V_1$};
\draw[->] (ph)-- +(-70:1cm) node[pos=1.0,right,scale=0.8]
{$V_n$};
\draw[<-] (psi)-- +(110:1cm) node[pos=1.0,left,scale=0.8]
{$V_1$};
\draw[<-] (psi)-- +(70:1cm) node[pos=1.0,right,scale=0.8]
{$V_n$};
\draw[<-] (ph) -- (psi) node[pos=0.5,left,scale=0.8] {$i$};
\end{tikzpicture}
=
\begin{tikzpicture}
\node at (0,0) {$\dots$};
\draw[<-] (-0.3,-1)-- (-0.3,1) node[pos=0.5,left,scale=0.8]
{$V_1$};
\draw[<-] (0.3,-1)-- (0.3,1) node[pos=0.5,right,scale=0.8]
{$V_n$};
\end{tikzpicture}
\end{equation}

The following result has been stated in a number of papers; a rigorous
proof can be found in \ocite{stringnet}. 

\begin{theorem} \label{t:TVLW}
Let $\A$ be a spherical fusion category and $\Si$ -- a closed oriented
surface. Then one has a canonical  isomorphism
$\ZTV^\A(\Si)\simeq\Hs(\Si)$.
\end{theorem}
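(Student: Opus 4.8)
The plan is to fix a cell decomposition $\De$ of $\Si$ and build an explicit isomorphism by Poincar\'e duality, using the dual cell decomposition $\De^*$ as the ``standard'' graph on which string-nets live. A spanning vector of $\HTV(\Si,\De)$ is a simple coloring $l$ of the edges of $\De$ together with a morphism in $H(C,l)=\<l(e_1),\dots,l(e_n)\>$ for every $2$-cell $C$. First I would define a linear map $\Phi\colon\HTV(\Si,\De)\to\Hs(\Si)$ reading this data as a colored graph supported on the $1$-skeleton of $\De^*$: each edge $e$ of $\De$ meets exactly one dual edge, whose color is set to $l(e)$, and the morphism attached to $C$ becomes the label of the corresponding vertex of $\De^*$. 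This is legitimate precisely because the edges of $\De^*$ incident to a dual vertex are the edges of $\partial C$, so $H(C,l)$ is exactly a valid vertex label. One checks $\Phi$ is well defined and compatible with the orientation and duality conventions of \firef{f:edgeconvention} and \leref{l:a2b2}.

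Second, I would prove that $\Phi$ is surjective. Given an arbitrary colored graph on $\Si$, one isotopes it into a neighborhood of the $1$-skeleton of $\De^*$ and then uses the local relations — in particular the completeness relation \eqref{e:local_rel} — to resolve each edge color into simple summands, to fuse all strands crossing a fixed dual edge into a single simple strand, and to fuse all morphisms incident to a dual vertex into one coupon. The result is a linear combination of standard string-nets, i.e.\ of images under $\Phi$. This ``normal form'' step is where the planarity of each $2$-cell of $\De^*$ (a disk) and the sphericity of $\A$ are used.

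Third — and this is the crux — I would identify $\ker\Phi$ with the kernel of the Turaev--Viro projector. Since $Z(\Si\times I)$ is idempotent, $\ZTV^\A(\Si)=\im Z(\Si\times I)\cong\HTV(\Si,\De)/\ker Z(\Si\times I)$, while surjectivity of $\Phi$ gives $\Hs(\Si)\cong\HTV(\Si,\De)/\ker\Phi$. Thus the theorem reduces to the equality $\ker\Phi=\ker Z(\Si\times I)$, after which $\Phi$ restricts to the desired isomorphism on $\im Z(\Si\times I)$. I expect this to be the main obstacle. The strategy is to compute $Z(\Si\times I)$ from an explicit decomposition of the cylinder as a prism over $\De$: using independence of $Z(M)$ from the chosen decomposition (\ocite{balsam-kirillov}*{Theorem 4.4}), one reduces the prism via Pachner-type moves to a composite of elementary tetrahedral weights and recognizes the resulting operator as the one imposing the disk local relations cell by cell. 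Concretely, the bubble and fusion moves generated by \eqref{e:local_rel} are matched one-to-one with the elementary moves relating cell decompositions of $\Si$, which $Z(\Si\times I)$ intertwines; this shows simultaneously that TV-relations are local relations ($\ker Z\subseteq\ker\Phi$) and that local relations are respected by TV ($\ker\Phi\subseteq\ker Z$).

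Finally, I would note canonicity: $\Hs(\Si)$ refers to no cell decomposition, and $\ZTV^\A(\Si)$ is independent of $\De$ up to canonical isomorphism by \ocite{balsam-kirillov}, so the isomorphism produced by $\Phi$ does not depend on the auxiliary choice of $\De$. A fully rigorous treatment of all three steps is carried out in \ocite{stringnet}.
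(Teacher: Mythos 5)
Your proposal is correct and follows essentially the same route as the paper, which states this result as known (deferring the rigorous proof to \ocite{stringnet}) but records precisely your three-step structure in Theorem~\ref{t:TVLW2}: identify $\HTV(\Si,\De)$ with string-nets supported on the dual $1$-skeleton, bring arbitrary graphs to this normal form via the local relations, and identify the cylinder operator $Z_{TV}(\Si\times I)$ with the projector $B^{str}=\prod_p B^{str}_p$ whose image is $\Hs(\Si)$. The only cosmetic difference is that the paper factors your kernel comparison through the intermediate space $\HsD$ on the punctured surface (so that $\ker\Phi$ is visibly generated by the loop moves around the punctures), and it flags a normalization subtlety (the $\sqrt{d_{l(e)}}$ rescaling) that your sketch leaves implicit.
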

In fact, we will need a more detailed version of the theorem above. Namely,
let $\De$  be a cell decomposition of $\Si$. Let $\Si-\De^0$ be the
surface
with punctures obtained by removing from $\Si$ all vertices of $\De$ and
let $\HsD=\Hs(\Si-\De^0)$ be the corresponding stringnet space. Then
one has the following results.

\begin{theorem}\label{t:TVLW2}
  \par\noindent
  \begin{enumerate}
    \item The natural map $\HsD\to \Hs$ induces an isomorphism
$$
\Hs\simeq \im(B^s)=\{x\in\HsD\st B^{str}_p x=x\ \forall p\}\subset
\HsD
$$
where $B^{str}=\prod_p B^{str}_p$, $p$ runs over the set of vertices of
$\De$ and $B^{str}_p\colon\HsD\to\HsD$ is the operator which adds to a
colored graph a small loop around puncture $p$ as shown below.
\textup{(}The superscript $str$ is introduced to avoid confusion with the
plaquette operators $B_p$ in Kitaev's model; relation between the two
operators is clarified below.\textup{)}
\begin{figure}[ht]
$$
\sum_i\frac{d_i}{\DD^2}\quad
\begin{tikzpicture}
\node[dotnode, label=-45:$p$] at (0,0) {};
\draw (0,0) circle(0.5cm);
\node[right] at (30:0.6) {$i$};
\end{tikzpicture}
$$
\caption{Operator $B^{str}_p$}\label{f:B_p}
\end{figure}

    \item One has a natural isomorphism $\HTV(\Si,\De)\simeq \HsD$
    \item Under the isomorphism of the previous part, the operator
associated to the cylinder  $Z_{TV}(\Si\times I)\colon
\HTV(\Si,\De)\to\HTV(\Si,\De)$ is
identified with the projector $B^{str}=\prod_p B^{str}_p\colon
\HsD\to\HsD$.
  \end{enumerate}
\end{theorem}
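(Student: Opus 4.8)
The plan is to establish the three parts in the order (2), (1), (3), since part (2) provides a concrete model of $\Hs_\De$ in which the operators appearing in parts (1) and (3) become transparent. Throughout I would pass to the dual cell decomposition $\De^*$, whose $1$-skeleton is a graph embedded in $\Si-\De^0$: its vertices sit at the centers of $2$-cells of $\De$, its edges each cross a single edge of $\De$, and each of its faces contains exactly one vertex of $\De$, i.e.\ exactly one puncture.

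For part (2), I would observe that a coloring $l$ of the edges of $\De$ is the same datum as a coloring of the edges of $\De^*$, and that the space $H(C,l)=\<l(e_1),\dots,l(e_n)\>$ attached to a $2$-cell $C$ is precisely the space of admissible vertex labels at the corresponding vertex of $\De^*$. Thus an element of $\HTV(\Si,\De)=\bigoplus_l\bigotimes_C H(C,l)$ is exactly a formal combination of graphs supported on the $1$-skeleton of $\De^*$ with simple edge labels, and sending such data to the corresponding stringnet defines the map to $\Hs_\De$. That this map is an isomorphism is the cellularity statement of \ocite{stringnet}: any stringnet on $\Si-\De^0$ reduces to one supported on $\De^*$, and since each complementary face is a once-punctured disk, no local relation identifies distinct colorings or distinct vertex morphisms.

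For part (1), I would first check that each $B^{str}_p$ is idempotent and that the $B^{str}_p$ pairwise commute, their supports being small loops around distinct punctures; idempotency follows by merging two concentric vacuum loops via \eqref{e:local_rel} with the normalization $\sum_i d_i/\DD^2$. The inclusion $\Si-\De^0\injto\Si$ induces a surjection $\pi\colon\Hs_\De\surjto\Hs$. On $\Hs$ a vacuum loop around a point bounds an honest disk and is removed by \eqref{e:local_rel}, so $B^{str}_p$ descends to the identity and $\pi$ restricts to a surjection $\im(B^{str})\surjto\Hs$. The substantive point, which I expect to be the main obstacle, is injectivity: one must show that $\ker\pi$ is spanned precisely by the elements $(1-B^{str}_p)x$, i.e.\ that the only local relations available in $\Hs$ but not in $\Hs_\De$ are those generated by absorbing the vacuum loop at a puncture. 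This rests on the detailed analysis of local relations near a puncture carried out in \ocite{stringnet}.

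For part (3), I would compute $Z_{TV}(\Si\times I)$ from a cell decomposition of $\Si\times I$ restricting to $\De$ on both ends, using a single interpolating layer. After contracting the internal summations, the resulting operator inserts, under the identification of part (2), exactly one normalized vacuum loop around each puncture, so that it equals $B^{str}=\prod_p B^{str}_p$; the normalizations agree because $\DD^2=\dim R$. Reducing the genuinely three-dimensional state sum to this two-dimensional loop insertion is the technical heart of this part and is the content of \ocite{balsam-kirillov}*{Theorem 4.4}, together with the projector property of the cylinder recorded above. Assembling the three identifications yields the theorem.
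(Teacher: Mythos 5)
The paper does not prove \thref{t:TVLW2} at all: it is recalled as a known result, with the proof explicitly deferred to \ocite{stringnet} (``The proof of this theorem can be found in \ocite{stringnet}''), so there is no in-paper argument to compare yours against. Your outline is a faithful reconstruction of the strategy of that reference: part (2) via the identification of $\bigoplus_l\bigotimes_C H(C,l)$ with colored graphs supported on the $1$-skeleton of $\De^*$, part (1) via the surjection $\HsD\surjto\Hs$ induced by filling the punctures, and part (3) by evaluating the cylinder with a product cell decomposition. You also correctly flag, and defer to \ocite{stringnet} and \ocite{balsam-kirillov}, exactly the two genuinely hard steps --- that $\ker(\HsD\to\Hs)$ is generated by the elements $(1-B^{str}_p)x$, and that the three-dimensional state sum for $\Si\times I$ contracts to the insertion of one normalized vacuum loop per puncture --- which is the same division of labor the paper itself makes by citation. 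One small point worth making explicit if you were to flesh this out: the injectivity claim in your part (2) (``no local relation identifies distinct colorings or distinct vertex morphisms'') is itself a nontrivial evaluation/factorization statement for punctured disks, not merely an observation that the faces of $\De^*$ are punctured; and the isomorphism requires the normalization factor $\sqrt{\mathbf{d}_l}$ that the paper warns about after the theorem statement.
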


The proof of this theorem can be found in \ocite{stringnet}; obviously, it
implies \thref{t:TVLW}. Note that the isomorphism constructed in the proof
requires a non-trivial choice of normalizations; see \ocite{stringnet} for
details.

\section{The main theorem: closed surface}\label{s:main1}

In this section, we prove the first main result in the paper, identifying
the ground space $K_{R}(\Si,\De)$ of Kitaev model with the vector space
$Z_{TV}^\A$ of the Turaev--Viro TQFT with the category $\A=\Rep(R)$.

\begin{theorem} \label{t:main1}
Let $R$ a finite-dimensional semisimple Hopf algebra. Then for any 
closed, oriented surface $\Si$ with a cell decomposition $\De$ one has a
canonical isomorphism $K_{R}(\Si,\De) \cong Z_{TV}^\A(\Si)$, where
$Z_{TV}^\A$  is the Turaev-Viro TQFT based on the category $\A = \Rep(R)$.
\end{theorem}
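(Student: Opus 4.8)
The plan is to build an explicit isomorphism between $K_R(\Si,\De)$ and $\ZTV^\A(\Si)$ by routing through the stringnet description of Turaev--Viro (\thref{t:TVLW}, \thref{t:TVLW2}), since the stringnet space is cut out by operators $B^{str}_p$ that are manifestly analogous to the Kitaev plaquette operators. First I would identify the ``crude'' Kitaev Hilbert space $\HK(\Si,\De)=\bigotimes_E R$ with a colored-graph space using the canonical isomorphism $R\simeq\bigoplus_i V_i\otimes V_i^*$ from \eqref{e:isom}. Under this identification, placing an element $x_e\in R$ on an edge $e$ corresponds to a superposition, over simple colors $i$, of a pair of half-edges colored $V_i,V_i^*$; equivalently, $\HK$ maps to the space of colorings of the \emph{dual} cell decomposition, where each edge of $\De$ meets one plaquette on each side. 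The goal is to show that imposing the ground-state conditions $A_v x=x$ and $B_p x=x$ (\deref{d:groundspace}) exactly cuts $\HK$ down to the stringnet space $\Hs(\Si)\simeq\ZTV^\A(\Si)$.

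The key steps, in order, are the following. First, interpret the vertex projector $A_v=A_v^h$: by \coref{c:haar}, $h_n=\De^{(n-1)}(h)$ acts as the projector onto the $R$-invariants $\langle W_1,\dots,W_n\rangle=\Hom_R(\one,W_1\dotimes W_n)$ of the tensor product of edge-spaces around $v$. So $\im(A_v)$ at each vertex is precisely the space $H(C,l)$ of admissible vertex labelings that appears in the Turaev--Viro state space $\HTV(\Si,\De)$, which is the stringnet requirement that edges meeting at a vertex carry an intertwiner. Second, interpret the plaquette projector $B_p=B_p^{\bar h}$: using the formula for $B^\al_{p,v}$ in \deref{d:plaquette_op}, \leref{l:hbar2} computes $L^*_{\bar h}$ explicitly in the $\bigoplus V_i\otimes V_i^*$ picture, and the resulting graphical expression is exactly the operator that inserts a weighted sum $\sum_i (d_i/\DD^2)$ of loops colored $i$ around the plaquette --- i.e.\ it matches $B^{str}_p$ of \firef{f:B_p} (after passing to the dual lattice, where plaquettes of $\De$ become punctures). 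Third, assemble these: on $\im\bigl(\prod_v A_v\bigr)=\HTV(\Si,\De)\simeq\HsD$ (\thref{t:TVLW2}(2)), the commuting family $\prod_p B_p$ is identified with $B^{str}=\prod_p B^{str}_p$, so $$K_R(\Si,\De)=\im\Bigl(\prod_v A_v\Bigr)\cap\im\Bigl(\prod_p B_p\Bigr)\simeq\im(B^{str})\simeq\Hs(\Si)\simeq\ZTV^\A(\Si),$$ the last two isomorphisms being \thref{t:TVLW2}(1) and \thref{t:TVLW}. One must check that the $A_v$ and $B_p$ conditions can be imposed in either order and are compatible, which follows from \thref{t:commute}(1).

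I expect the main obstacle to be the second step: matching the Kitaev plaquette operator $B_p$ with the stringnet loop operator $B^{str}_p$ with the correct normalizations and orientations. The operator $L^*_{\bar h}$ of \leref{l:hbar2} comes with a factor $d_{i_1}\cdots d_{i_n}d_k/\dim R$ and a specific graphical pattern of $\ph_\al,\ph^\al$ intertwiners threaded by a single strand colored $k$ around the plaquette; one has to verify that, after applying the vertex projectors (which force the $\ph_\al$ to land in invariant spaces) and using $\DD^2=\dim R$, this collapses precisely to the summation convention \eqref{e:summation_convention} and the local relation \eqref{e:local_rel} defining $B^{str}_p$. The bookkeeping of dual bases, of the evaluation pairing \eqref{e:pairing} used to glue half-edges across each edge, and of the clockwise-versus-counterclockwise edge orderings (\deref{d:vertex_op}, \deref{d:plaquette_op}) is where sign- and normalization-errors would creep in; the duality \leref{l:a2b2} and the cyclic invariance of $h_n$ (\thref{t:haar}(3)) are the tools that make the identification independent of the auxiliary site choices. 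The isomorphism of \thref{t:TVLW2} itself requires non-trivial normalization choices, so I would fix those first and propagate them through the identification $R\simeq\bigoplus V_i\otimes V_i^*$ to keep the two normalization conventions consistent.
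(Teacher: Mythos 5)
Your proposal follows essentially the same route as the paper: the paper likewise introduces the intermediate space $\HA=\{x\st A_vx=x\}$, identifies it with $\HTV(\Si,\De^*)$ via $R\simeq\bigoplus V_i\otimes V_i^*$ and \coref{c:haar} (with exactly the $\sqrt{\mathbf{d}_l}$ rescaling you anticipate), and then uses \leref{l:hbar2} together with the local relation \eqref{e:local_rel} to match $B_p$ with $B^{str}_p$ in the stringnet picture. The only slip is cosmetic: in your third step the image of $\prod_v A_v$ should be $\HTV(\Si,\De^*)$ for the \emph{dual} cell decomposition (as you correctly note elsewhere), not $\HTV(\Si,\De)$.
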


For example, on the sphere $S^2$, the ground state is one-dimensional, or
\textit{non-degenerate} in physics terminology. 

The proof of this theorem occupies the rest of this section. For brevity,
we will denote the Hilbert space of Kitaev model just by $\HK$, dropping
$\Si,\De$ from the notation. 

Recall that $K_R\subset \HK$ was defined as the ground space of the
Hamiltonian. We begin by introducing an intermediate vector space
$\HA$ such that $K_R\subset \HA\subset \HK$. Namely, we let 
$$
\HA=\ker(\sum(1-A_v)) =\{x\in \HK\st A_v x=x\ \forall \ v\}\subset
\HK
$$
where $A_v$ are vertex operators \eqref{e:vertex2} and the sum is over all
vertices $v$ of $\De$. 

Since $A_v$, $B_p$ commute, the $B_p$ operators preserve subspace $\HA\subset
\HK$. The following equality is obvious from the definitions:
\begin{equation}
    K_R=\{x\in \HA\st B_p x= x \ \forall\ p\}
\end{equation}
where $p$ ranges over all 2-cells of $\De$.

We can now formulate the first lemma relating Kitaev's model with the
Turaev--Viro TQFT.

\begin{lemma}\label{l:main1}
    One has a natural isomorphism 
    $$
    \HA(\Si,\De)\simeq \HTV(\Si,\De^*)
    $$
    where $\De^*$ is the dual cell decomposition.
\end{lemma}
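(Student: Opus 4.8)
The plan is to decompose the crude Hilbert space into pieces indexed by edge colorings, observe that the vertex projectors $A_v$ act on pairwise disjoint groups of tensor factors, and then match the outcome with the Turaev--Viro state space on the dual decomposition.

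First I would apply the canonical isomorphism \eqref{e:isom}, $R\cong\bigoplus_i V_i\otimes V_i^*$, to each edge factor of $\HK=\bigotimes_E R$. Fixing an orientation $\oo$ of the edges, this gives
$$
\HK\cong\bigoplus_{l}\ \bigotimes_{e\in E}\bigl(V_{l(e)}\otimes V_{l(e)}^*\bigr),
$$
where $l\colon E\to I$ runs over all colorings of the oriented edges by simple objects. I would then regroup the tensor factors by vertex: for an edge $e$ running from $v$ to $w$, the left factor $V_{l(e)}$ is attached to the source $v$ and the right factor $V_{l(e)}^*$ to the target $w$ (a loop contributes both factors to its single endpoint). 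A direct check from \leref{l:hopfrewritten} and the formulas of \seref{s:regular} shows that, under \eqref{e:isom}, the left regular action acts only on the $V_{l(e)}$ factor (in the representation $V_{l(e)}$), while the right regular action $y\mapsto yS(x)$ acts only on the $V_{l(e)}^*$ factor (in the dual representation). Since $A_v=A_v^h$ acts by the left action along outgoing edges and by the right action along incoming edges, it therefore touches exactly the group of factors attached to $v$ — the $V_{l(e)}$ for outgoing edges and the $V_{l(e)}^*$ for incoming edges — and acts there by $\De^{(n-1)}(h)$, where $n$ is the valence of $v$ and the legs are arranged in counterclockwise cyclic order.

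Next I would invoke \coref{c:haar}: the element $\De^{(n-1)}(h)$ acts as the projection onto the invariant subspace, so on the coloring-$l$ summand $A_v$ is the projection onto $\Hom_R(\one, W_1\otimes\cdots\otimes W_n)$, where $W_1,\dots,W_n$ are the legs at $v$ ($W_j=V_{l(e)}$ for outgoing, $V_{l(e)}^*$ for incoming). The two factors $V_{l(e)}$ and $V_{l(e)}^*$ of a given edge are attached to different vertices, so the operators $\{A_v\}$ act on pairwise disjoint groups of tensor factors; moreover each $A_v$ acts within fixed irreducibles and does not alter the colors $l$, so the decomposition by $l$ is preserved. Hence the simultaneous fixed-point space is the tensorwise product of the individual images, computed summand by summand:
$$
\HA=\bigoplus_{l}\ \bigotimes_{v}\ \Hom_R(\one,\ W_1\otimes\cdots\otimes W_{n(v)}).
$$

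Finally I would translate this into the Turaev--Viro state space on $\De^*$. Under duality, vertices of $\De$ correspond to $2$-cells of $\De^*$ and edges of $\De$ to edges of $\De^*$; orienting the dual edges by the convention of \firef{f:edgeconvention} turns a coloring $l$ of $\De$ into a coloring of $\De^*$ satisfying $l(\ov\ee)=l(\ee)^*$. For the $2$-cell $C_v$ of $\De^*$ surrounding $v$, the boundary edges are the duals of the edges at $v$, met in the same counterclockwise cyclic order; an outgoing edge is traversed in the positive direction of its dual (contributing $V_{l(e)}$) and an incoming edge in the negative direction (contributing $V_{l(e)}^*$). Thus $\Hom_R(\one,W_1\otimes\cdots\otimes W_{n(v)})=H(C_v,l)$, and summing over $l$ yields $\HA(\Si,\De)\simeq\HTV(\Si,\De^*)$. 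I expect the main obstacle to be precisely this last bookkeeping: verifying that the counterclockwise order of the legs at $v$ agrees with the counterclockwise order on $\del C_v$, and that the $+90^\circ$ edge-orientation convention converts the outgoing/incoming dichotomy into the correct object-versus-dual assignment demanded by $l(\ov\ee)=l(\ee)^*$. Everything else follows directly from \coref{c:haar} and the disjointness of the supports of the operators $A_v$.
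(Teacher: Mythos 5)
Your proposal is correct and follows essentially the same route as the paper: decompose each edge factor via $R\cong\bigoplus_i V_i\otimes V_i^*$, regroup the resulting half-edge factors by vertex, apply \coref{c:haar} to identify the image of each $A_v$ with the invariant subspace $\<l(e_1),\dots,l(e_n)\>$, and match these spaces with the $2$-cells of $\De^*$. The only thing the paper adds is a rescaling of the resulting isomorphism by $\prod_e\sqrt{d_{l(e)}}$, which is irrelevant to the statement of this lemma but is chosen so that the subsequent identification of $B_p$ with $B_p^{str}$ in \leref{l:main2} comes out correctly.
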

\begin{proof}

Recall that we have an isomorphism $R\simeq \bigoplus V_i\otimes V_i^*$
(see \eqref{e:isom}). Using this isomorphism, we can  give an equivalent
description  of the vector space $\HK$. Namely,
let us denote by $E^{or}$  the set of oriented edges of $\De$, i.e. pairs
$\ee=(e,\text{orientation of } e)$; for such an oriented edge $\ee$, we
denote by $\bar{\ee}$ the edge with opposite orientation.  

Then we can rewrite the definition of $\HK$ as follows:
\begin{equation}\label{e:HK2}
\HK=\bigoplus_{l} \bigotimes_{\ee} l(\ee)
\end{equation}
where the sum is over all  colorings of   edges  of
$\De$ and tensor product is over all {\em oriented} edges $\ee$
of $\De$; thus, every unoriented edge $e$ appears in this tensor product
twice, with opposite orientations. We will illustrate a vector
$v=\bigotimes v_\ee$ by drawing two oriented half-edges in place of every
(unoriented) edge $e$ and writing the corresponding vector $v_\ee$ next to
each half-edge, as shown in \firef{f:halfedge}.

\begin{figure}[ht]
$$
\begin{tikzpicture}[baseline=0.5cm]
\node[dotnode] (A) at (0,0){};
\node[dotnode] (B) at (0,1){};
\draw[->] (A)--(B) node[pos=0.5, left]{$x=v\otimes w$};
\draw[->] (A)-- +(-60:0.5);
\draw[->] (A)-- +(240:0.5);
\draw[->] (B)-- +(60:0.5);
\draw[->] (B)-- +(120:0.5);
\end{tikzpicture}
\qquad
\leftrightarrow
\qquad
\begin{tikzpicture}[baseline=0.5cm]
\node[dotnode] (A) at (0,0){};
\node[dotnode] (B) at (0,1){};
\draw[->] (A)--(0,0.5) node[pos=0.5, left]{$v$};
\draw[<-] (0,0.5)--(B) node[pos=0.5, left]{$w$};
\draw[->] (A)-- +(-60:0.5);
\draw[->] (A)-- +(240:0.5);
\draw[->] (B)-- +(60:0.5);
\draw[->] (B)-- +(120:0.5);
\end{tikzpicture}
\qquad
\leftrightarrow
\qquad
\begin{tikzpicture}[baseline=0.5cm]
\node[dotnode] (A) at (0,0){};
\node[dotnode] (B) at (0,1){};
\draw[<-] (A)--(B) node[pos=0.5, right]{$S(x)=w\otimes v$};
\draw[->] (A)-- +(-60:0.5);
\draw[->] (A)-- +(240:0.5);
\draw[->] (B)-- +(60:0.5);
\draw[->] (B)-- +(120:0.5);
\end{tikzpicture}
$$
\caption{$\HK=\bigoplus_{l} \bigotimes_{\ee} V_{l(\ee)}$
}
\label{f:halfedge}
\end{figure}

Re-arranging the factors of \eqref{e:HK2}, we can write 
$$
\HK=\bigoplus_{l} \bigotimes_v \mathcal{H}_v
$$
where the product is over all vertices $v$ of the cell decomposition $\De$
and
$$
\mathcal{H}_v=l(e_1)\otimes \dots\otimes l(e_n)
$$
where the $l(e_1), \dots, l(e_n)$ are the colors of edges incident to $v$
taken in counterclockwise order with outgoing orientation.

In this language, the vertex operator $A_v$ acts on $\mathcal{H}_v$ by
$$
A_v(a_1\otimes \dots\otimes a_n)=h^{(1)}a_1\otimes\dots\otimes h^{(n)}a_n
$$

By \coref{c:haar}, we see that therefore the image of $\prod A_v$ is the
space
$$
\HA=\bigoplus_{l} \bigotimes_v \<l(e_1),\dots, l(e_n)\>
$$
where, as before,  $l(e_1), \dots, l(e_n)$ are the colors of edges incident
to $v$ taken in counterclockwise order with outgoing orientation.

Since vertices of $\De$ correspond to 2-cells of $\De^*$, this gives an
isomorphism 
$$
\theta\colon \HA\simeq
\bigoplus_{l}\bigotimes_{v}H_{TV}(C_v,l)=H_{TV}(\Si,\De^*)
$$
where $C_v$ is the 2-cell of $\De^*$ corresponding to vertex $v$ of $\De$.

However, for reasons that will become clear in the future, we will rescale
this isomorphism and define
$$
\tilde \theta=\sqrt{\mathbf{d}_l} \theta
$$
where, for any choice of simple coloring $l$  of edges, 
$\sqrt{\mathbf{d}_l}$ acts on $\bigotimes_{C_v} H(C_v,l)$ by multiplication
by the factor
$$
\prod_{e} \sqrt{d_{l(e)}}
$$
where the product is over all unoriented edges $e$ of $\De^*$.
\end{proof}

\firef{f:HA} shows the composition map
$\HA\xxto{\tilde\th} \HTV(\Si, \De^*)\simeq \Hs_{\De^*}$ (cf.
\thref{t:TVLW2}). 

\begin{figure}[ht]
\begin{align*}
\begin{tikzpicture}
\node at (0,0) {$\ast$};
\pentagon
\coordinate (m1) at ($(v1)!.5!(v2)$); 
\coordinate (m2) at ($(v2)!.5!(v3)$);
\coordinate (m3) at ($(v3)!.5!(v4)$);
\coordinate (m4) at ($(v4)!.5!(v5)$);
\coordinate (m5) at ($(v5)!.5!(v1)$);
\draw[->](v1)--  +(72:0.5) node[pos=0.7, above left]{$\scriptstyle u_1$};
\draw[->](v5)--  +(144:0.5) node[pos=0.7, above right]{$\scriptstyle u_2$};
\draw[->](v4)--  +(-144:0.5) node[pos=0.7, below right]{$\scriptstyle u_3$};
\draw[->](v3)--  +(-72:0.5) node[pos=0.7, below left]{$\scriptstyle u_4$};
\draw[->](v2)--  +(0:0.5) node[pos=0.7, above]{$\scriptstyle u_5$};
\draw[->] (v1)--(m1) node[pos=0.5, above right=-2pt]{$\scriptstyle w_5$};
\draw[->] (v2)--(m1) node[pos=0.5, above right=-2pt]{$\scriptstyle v_5$};
\draw[->] (v2)--(m2) node[pos=0.5, below right=-2pt]{$\scriptstyle w_4$};
\draw[->] (v3)--(m2) node[pos=0.5, below right=-2pt]{$\scriptstyle v_4$};
\draw[->] (v3)--(m3) node[pos=0.5, below]{$\scriptstyle w_3$};
\draw[->] (v4)--(m3) node[pos=0.5, below]{$\scriptstyle v_3$};
\draw[->] (v4)--(m4) node[pos=0.5, left]{$\scriptstyle w_2$};
\draw[->] (v5)--(m4) node[pos=0.5, left]{$\scriptstyle v_2$};
\draw[->] (v5)--(m5) node[pos=0.5, above]{$\scriptstyle w_1$};
\draw[->] (v1)--(m5) node[pos=0.5, above]{$\scriptstyle v_1$};
\end{tikzpicture}
\quad 
&\mapsto
\dots d_{i_1}\dots d_{i_5}\dots 
\begin{tikzpicture}
\node at (0,0) {$\ast$};
 \node[morphism] (v1) at (72:1) {$\ph_1$};
 \node[morphism] (v2) at (0:1) {$\ph_5$};
 \node[morphism] (v3) at (-72:1) {$\ph_3$};
 \node[morphism] (v4) at (-144:1) {$\ph_3$};
 \node[morphism] (v5) at (144:1) {$\ph_2$};
\draw[->](v1)--  +(72:0.7) node[pos=0.7, above left]{$\scriptstyle j_1$};
\draw[->](v5)--  +(144:0.7) node[pos=0.7, above right]{$\scriptstyle j_2$};
\draw[->](v4)--  +(-144:0.7) node[pos=0.7, below right]{$\scriptstyle j_3$};
\draw[->](v3)--  +(-72:0.7) node[pos=0.7, below left]{$\scriptstyle j_4$};
\draw[->](v2)--  +(0:0.7) node[pos=0.7, above]{$\scriptstyle j_5$};
 \draw[->] (v1)--(v5) node[pos=0.5, above]{$\scriptstyle i_1$};
 \draw[->] (v5)--(v4)  node[pos=0.5, left]{$\scriptstyle i_2$};
 \draw[->] (v4)--(v3)  node[pos=0.5, below left]{$\scriptstyle i_3$};
 \draw[->] (v3)--(v2)  node[pos=0.5, below right]{$\scriptstyle i_4$};
 \draw[->] (v2)--(v1)  node[pos=0.5, above right]{$\scriptstyle i_5$};
\end{tikzpicture}\\
& v_1\in V_{i_1}, w_1\in V_{i_1}^*, \dots \\
& \ph_1=u_1\otimes v_1\otimes w_5, \dots
\end{align*}
\caption{Isomorphism $\HA\simeq\Hs_{\De^*}$. Asterisk $\ast$
shows the puncture obtained by removing a vertex of $\De^*$.}\label{f:HA}
\end{figure}

\begin{lemma}\label{l:main2}
  Under the isomorphism of \leref{l:main1}, the operator
$B=\prod_p B_p\colon  \HA\to \HA$ is identified with the operator
$Z_{TV}(\Si\times I)\colon \HTV(\Si,\De^*)\to\HTV(\Si,\De^*)$.
\end{lemma}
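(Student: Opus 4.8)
The plan is to reduce everything to a single plaquette and then to recognize the Kitaev plaquette operator, after transport to the stringnet picture, as the loop--insertion operator $B^{str}_p$ of \thref{t:TVLW2}. First I would invoke \thref{t:TVLW2}, applied to the dual decomposition $\De^*$: it gives an isomorphism $\HTV(\Si,\De^*)\simeq\Hs_{\De^*}$ under which $Z_{TV}(\Si\times I)$ becomes $B^{str}=\prod_pB^{str}_p$, where $p$ now runs over the vertices of $\De^*$, i.e.\ over the plaquettes of $\De$ (equivalently, over the punctures obtained by removing the plaquette--centers). Composing with $\tilde\theta$ from \leref{l:main1}, I obtain an isomorphism $\HA\simeq\Hs_{\De^*}$ under which the stringnet graph is the $1$--skeleton of $\De$ with a puncture inside each plaquette (see \firef{f:HA}). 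Since both $B=\prod_pB_p$ and $B^{str}=\prod_pB^{str}_p$ are products of mutually commuting operators indexed by the plaquettes of $\De$ (see \thref{t:commute} for the $B_p$; the stringnet loops around distinct punctures are supported in disjoint disks), it suffices to prove $\tilde\theta\circ B_p=B^{str}_p\circ\tilde\theta$ for a single plaquette $p$.

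The core of the argument is the explicit graphical evaluation of $B_p$. Up to the orientation conventions of \deref{d:plaquette_op}, the plaquette operator $B_p=B_p^{\hbar}$ acting on the copies of $R$ attached to the edges around $p$ is exactly the operator $L^*_{\hbar}$ on $R^{\otimes n}$ computed in \leref{l:hbar2}. That lemma already expresses $B_p$, after passing to the isomorphism $R\simeq\bigoplus_i V_i\otimes V_i^*$, as the insertion of splitting morphisms $\ph_\al,\ph^\al$ at each edge together with a single closed strand colored $k$ threading once around all the edges of $\del p$, weighted by $\frac{d_{i_1}\cdots d_{i_n}d_k}{\dim R}$ and summed over $k$ and over the basis morphisms. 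Transporting this through $\tilde\theta$ (\firef{f:HA}), the splitting morphisms $\ph_\al,\ph^\al$ compose with the vertex morphisms $\ph_s$ sitting at the vertices of $\De$, and the closed strand colored $k$ becomes precisely a loop colored $k$ encircling the puncture at the center of $p$.

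It then remains to match normalizations: I must check that the asymmetric factor $\frac{d_{i_1}\cdots d_{i_n}d_k}{\dim R}$ produced by \leref{l:hbar2} becomes the symmetric stringnet weight $\frac{d_k}{\DD^2}$ defining $B^{str}_p$ (recall $\DD^2=\dim R$). This is exactly the purpose of the rescaling $\tilde\theta=\sqrt{\mathbf{d}_l}\,\theta$ introduced in \leref{l:main1}: the factor $\prod_e\sqrt{d_{l(e)}}$ applied before $B_p$ (with incoming colors $i_s$) and its inverse applied after $B_p$ (with outgoing colors $j_s$) absorb the edge--dimension factors $d_{i_1}\cdots d_{i_n}$, leaving only the loop weight $\frac{d_k}{\DD^2}$, which is the definition of $B^{str}_p$ in \firef{f:B_p}. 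Combining this per--plaquette identity over all $p$ with the identification $B^{str}=Z_{TV}(\Si\times I)$ from \thref{t:TVLW2} then yields the claim.

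I expect the main obstacle to be the graphical and normalization bookkeeping of the last two paragraphs: verifying precisely that the closed strand of \leref{l:hbar2} sits around the puncture as an unknotted loop (so that it really is the stringnet loop operator, and not some more complicated configuration), that the splitting morphisms recombine correctly with the vertex morphisms via the duality of the bases $\ph_\al,\ph^\al$, and that the $\sqrt{\mathbf{d}_l}$ rescaling exactly symmetrizes the dimension factors. The orientation and antipode conventions around $\del p$ will also need care, but those are routine given \eqref{e:switch} and \deref{d:plaquette_op}.
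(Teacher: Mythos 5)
Your proposal follows essentially the same route as the paper's proof: reduce to a single plaquette, use \leref{l:hbar2} to rewrite $B_p$ as a $k$-colored loop around the puncture with splitting morphisms $\ph_\al,\ph^\al$ inserted on the edges, transport through $\tilde\theta$, and recognize the result as $B^{str}_p$. The one small correction is in the bookkeeping of dimension factors: after the $\sqrt{\mathbf{d}_l}$ rescaling one is left with the weight $\frac{d_k}{\dim R}\,d_{j_1}\cdots d_{j_n}$, and it is the local relation \eqref{e:local_rel} (whose left-hand side carries exactly the weights $d_{j_s}$), rather than the rescaling alone, that absorbs the remaining edge-dimension factors and collapses the split edges back to the original graph.
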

\begin{proof}
  We will prove it in the language of stringnets: combining isomorphism
$\HA\simeq \HTV(\Si,\De^*)$ (see \leref{l:main1}) and
$\HTV(\Si,\De^*)\simeq \Hs_{\De^*}$ (see
\thref{t:TVLW2}), we get an isomorphism $\HA\simeq \Hs_{\De^*}$,  and it
suffices to prove that under this isomorphism, the plaquette projector
$B_p$ of Kitaev model is identified with the projector $B_p^{str}$ of the
stringnet  model. To avoid complicated notation, we write explicitly the
proof in the case shown in \firef{f:HA}. 

Using \leref{l:hbar2}, we see
that  the projector $B_p$ of Kitaev model can be described as follows: if
$x\in \HA$ is as shown in \firef{f:HA}, then
$$
\tilde\theta (B_p x)=
\sum_{k, j_1, \dots, j_5}\frac{d_k}{\dim R}\ d_{j_1}\dots d_{j_5}\quad
\begin{tikzpicture}
\node at (0,0) {$\ast$};
 \node[morphism] (v1) at (72:2) {$\ph_1$};
 \node[morphism] (v2) at (0:2) {$\ph_5$};
 \node[morphism] (v3) at (-72:2) {$\ph_3$};
 \node[morphism] (v4) at (-144:2) {$\ph_3$};
 \node[morphism] (v5) at (144:2) {$\ph_2$};
 \node[small_morphism] (m1) at ($(v1)!.3!(v2)$){$\scriptstyle \al$}; 
   \node[small_morphism] (n1) at ($(v1)!.3!(v5)$) {$\scriptstyle \al$};
 \node[small_morphism] (m2) at ($(v2)!.3!(v3)$) {$\scriptstyle \eps$};
   \node[small_morphism] (n2) at ($(v2)!.3!(v1)$) {$\scriptstyle \eps$}; 
 \node[small_morphism] (m3) at ($(v3)!.3!(v4)$) {$\scriptstyle \de$};
   \node[small_morphism] (n3) at ($(v3)!.3!(v2)$) {$\scriptstyle \de$}; 
 \node[small_morphism] (m4) at ($(v4)!.3!(v5)$) {$\scriptstyle \ga$};
   \node[small_morphism] (n4) at ($(v4)!.3!(v3)$) {$\scriptstyle \ga$}; 
 \node[small_morphism] (m5) at ($(v5)!.3!(v1)$) {$\scriptstyle \be$};
   \node[small_morphism] (n5) at ($(v5)!.3!(v4)$) {$\scriptstyle \be$}; 
 \draw[->] (v1)--(n1) -- (m5) node[pos=0.5, above]{$\scriptstyle j_1$}
               -- (v5);
 \draw[->] (v2)--(n2) -- (m1) node[pos=0.5, above right]{$\scriptstyle j_5$}
               -- (v1);
 \draw[->] (v3)--(n3) -- (m2) node[pos=0.5, below right]{$\scriptstyle j_4$}
               -- (v2);
 \draw[->] (v4)--(n4) -- (m3) node[pos=0.5, below]{$\scriptstyle j_3$}
               -- (v3);
 \draw[->] (v5)--(n5) -- (m4) node[pos=0.5, left]{$\scriptstyle j_2$}
               -- (v4);
  \draw (m1) .. controls (72:1.1) .. (n1);    
  \draw (m2) .. controls (0:1.1) .. (n2);    
  \draw (m3) .. controls (-72:1.1) .. (n3);    
  \draw (m4) .. controls (-144:1.1) .. (n4);    
  \draw (m5) .. controls (144:1.1) .. (n5);    
  \node[anchor=base] at (72:1) {$k$};
  \node[anchor=base] at (0:1) {$k$};
  \node[anchor=base] at (-72:1) {$k$};
  \node[anchor=base] at (-144:1) {$k$};
  \node[anchor=base] at (144:1) {$k$};
 \end{tikzpicture}\\
$$

Now we can use local relation \eqref{e:local_rel} in stringnet space to
transform it as follows:
\begin{align*}
\tilde\theta (B_p x)&=
\sum_{k}\frac{d_k}{\dim R}\quad
\begin{tikzpicture}
\node at (0,0) {$\ast$};
 \node[morphism] (v1) at (72:2) {$\ph_1$};
 \node[morphism] (v2) at (0:2) {$\ph_5$};
 \node[morphism] (v3) at (-72:2) {$\ph_3$};
 \node[morphism] (v4) at (-144:2) {$\ph_3$};
 \node[morphism] (v5) at (144:2) {$\ph_2$};
 \draw[->] (v1)--(v5) node[pos=0.5, above]{$\scriptstyle i_1$};
 \draw[->] (v2)--(v1) node[pos=0.5, above right]{$\scriptstyle i_5$};
 \draw[->] (v3)--(v2) node[pos=0.5, below right]{$\scriptstyle i_4$};
 \draw[->] (v4)--(v3) node[pos=0.5, below]{$\scriptstyle i_3$};
 \draw[->] (v5)--(v4) node[pos=0.5, left]{$\scriptstyle i_2$};
 \draw (0,0) circle(0.7);
 \node[above right] at (30:0.8) {$k$};
 \end{tikzpicture}\\
 &=B_p^s(\tilde\theta(x))
\end{align*}
(recall that  $\A=\Rep R$, $\DD^2=\dim R$).
\end{proof}

Combining \leref{l:main1}, \leref{l:main2}, we get the statement of the
theorem. 

\begin{corollary}\label{c:renorm}
  The space $K_R(\Si, \De)$  is independent of the choice of cell
  decomposition $\De$. 
\end{corollary}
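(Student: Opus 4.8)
The plan is to read the statement off of \thref{t:main1} directly. For every cell decomposition $\De$ of $\Si$, that theorem produces a \emph{canonical} isomorphism $K_R(\Si,\De)\cong \ZTV^\A(\Si)$ whose target is the Turaev--Viro vector space. The crucial feature I would exploit is that the right-hand side carries no reference to $\De$: in the construction of the Turaev--Viro TQFT reviewed above, any two cell decompositions $\De,\De'$ of the same surface yield a canonical isomorphism $\ZTV^\A(\Si,\De)\simeq \ZTV^\A(\Si,\De')$, so that $\ZTV^\A(\Si)$ genuinely depends on $\Si$ alone.

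Concretely, given two cell decompositions $\De$ and $\De'$, I would define the comparison map as the composite
$$
K_R(\Si,\De)\isoto \ZTV^\A(\Si)\isoto K_R(\Si,\De'),
$$
in which the first arrow is the canonical isomorphism of \thref{t:main1} attached to $\De$ and the second is the inverse of the canonical isomorphism of \thref{t:main1} attached to $\De'$. Since each factor is canonical, so is the composite, which is precisely the assertion of the corollary.

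I expect no genuine obstacle: all of the mathematical content is already contained in \thref{t:main1} (proved over the course of this section) together with the cell-decomposition-independence of Turaev--Viro theory established earlier. The one point that deserves a sentence of care is the word \emph{canonical} at each stage --- one should note that the isomorphism of \thref{t:main1}, built explicitly from \leref{l:main1} and \leref{l:main2}, is natural enough that the composite above is well defined and compatible under passage to a third decomposition, so that the resulting identifications are coherent. This is immediate from the explicit formulas and requires no further argument.
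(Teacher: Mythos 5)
Your proposal is correct and is exactly the argument the paper intends: the corollary is stated without separate proof precisely because it follows immediately from the canonical isomorphism $K_R(\Si,\De)\cong \ZTV^\A(\Si)$ of \thref{t:main1} together with the decomposition-independence of $\ZTV^\A(\Si)$ recorded in \eqref{e:ZTV} and the surrounding discussion. No further comment is needed.
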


\section{Excited states and Turaev--Viro theory with boundary}
In the previous section, we constructed a Hamiltonian on the Hilbert 
space $\HK(\Si,\De)$. The Hamiltonian had a special form; it was 
expressed as a sum of local commuting projectors. We saw that the ground
state was naturally isomorphic to that in Turaev-Viro theory. In this
section we study higher eigenstates of the Hamiltonian, which are typically
called excited states. Physically, excited states are interpreted as 
``quasiparticles'' (anyons) of various types sitting on the surface $\Si$.
Excited states can also be described in Turaev-Viro theory, viewed as an
extended 3-2-1 TQFT; a particle in this language corresponds to a puncture
in the surface with certain boundary conditions. 

\subsection{Excited states in Kitaev model}
As before, let $\Si$ be a closed surface with a cell decomposition $\De$. 

Recall (see \deref{d:site}) that a site of $\De$  is a pair $s=(v,p)$ of a
vertex and incident edge. 

\begin{definition} \label{d:disjointsites}
  Two sites $s = (v, p)$ and $s' = (v', p')$ are said to be {\em disjoint}
  if $v$ is not incident to $p'$ and $v'$ is not incident to $p$ (which in
  particular implies that  $v \neq v'$ and $p \neq p'$). More generally, we
  call a collection of $n$ sites  {\em disjoint} if any two among them are
  disjoint. 
\end{definition}

The following result immediately follows from \thref{t:com_relations}.
\begin{lemma}\label{l:Daction}
  Each site \textbf{s} defines an action $\rho_s$  of  $D(R)$ on $\HK$; if
  $s,s'$ are disjoint sites, then these actions commute. 
\end{lemma}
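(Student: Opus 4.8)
The plan is to derive both assertions directly from \thref{t:com_relations}, since that theorem has been stated precisely to make this reduction immediate. The first assertion—that each site $s=(v,p)$ defines an action $\rho_s$ of $D(R)$ on $\HK$—is nothing but part (3) of \thref{t:com_relations}, which asserts that $a\otimes\al\mapsto A_v^a B_p^\al$ is an algebra morphism $D(R)\to\End(\HK)$. So there is nothing new to prove there; I would simply invoke it.

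For the commutativity statement, the key observation is that two algebra actions on a common space commute as soon as their generating operators commute pairwise. The image of $\rho_s$ is generated, as a subalgebra of $\End(\HK)$, by the vertex operators $A_v^a$, $a\in R$, together with the plaquette operators $B_p^\al$, $\al\in\Rbar$; likewise the image of $\rho_{s'}$ is generated by $A_{v'}^b$, $b\in R$, and $B_{p'}^\be$, $\be\in\Rbar$. Thus it suffices to check that each of the four families of cross-pairs commutes.

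I would then match each family with the appropriate clause of \thref{t:com_relations}, using the disjointness hypotheses recorded in \deref{d:disjointsites}. Since $s$ and $s'$ are disjoint we have $v\neq v'$ and $p\neq p'$, so $[A_v^a,A_{v'}^b]=0$ and $[B_p^\al,B_{p'}^\be]=0$ by part (1). Disjointness also gives that $v'$ is not incident to $p$ and $v$ is not incident to $p'$, so $[A_{v'}^b,B_p^\al]=0$ and $[A_v^a,B_{p'}^\be]=0$ by part (2). Having verified all four cross-commutators, the actions $\rho_s$ and $\rho_{s'}$ commute.

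There is no genuine obstacle here: the content of the lemma lies entirely in \thref{t:com_relations}, and the only thing to get right is the bookkeeping—confirming that the definition of disjoint sites supplies exactly the four incidence conditions needed to route each cross-pair through part (1) or part (2). The mild subtlety worth flagging is that part (3), the single relation governing operators sharing a site, is never invoked for a commuting pair precisely because disjointness forces $v\neq v'$ and $p\neq p'$ and rules out the incidences that would couple the two sites; this is exactly what makes the two $D(R)$-actions independent.
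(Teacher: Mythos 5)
Your proposal is correct and follows exactly the route the paper takes: the paper simply notes that the lemma ``immediately follows from \thref{t:com_relations},'' and your argument is precisely that reduction, spelled out with the bookkeeping of which clause of the theorem handles which cross-pair. Nothing to add.
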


From this perspective, the ground state $K_{R}(\Si)$ has the trivial
representation of $D(R)$ attached to \textit{every} site.  

 In physics language, a representation $V$ of $D(R)$ at a site $s$ models a
particle of type $V$ at $s$, with the trivial representation corresponding
to the absence of a particle. Thus, the ground state has no particles at
all at any site; it is called the vacuum state.

Now suppose we fix a collection of $n$ disjoint sites $S=\{s_{1}, \dots,
s_{n}\}$, $s_i=(v_i,p_i)$. For a vertex $v$, we will write $v\in S$ if $v$
is one of the vertices $v_i$, and similarly for a plaquette $p$.

Define the operator $H_S\colon \HK(\Si,\De) \to\HK(\Si,\De)$ by 
\begin{equation}
H_S =  \sum_{v \notin S}(1 - A_{v})  
    +  \sum_{p \notin S}(1 - B_{p})
\end{equation}

Let 
\begin{equation}\label{e:LL}
\LL(\Si,\De, S)=\ker(H_S)=
    \{x \in \HK\st A_v x=x\ \forall v\notin S,
                             \quad 
                     B_p x= x\ \forall p\notin S\}
\end{equation}
We think of $\LL(\Si,\De,S)$  as the space of $n$ particles fixed at sites
$s_1,\dots, s_n$ on the surface; for brevity, we will frequently drop $\Si$
and $\De$ from the notation, writing  just $\LL(S)$.  Our next goal is to
describe this space. 


By \leref{l:Daction}, we have an action of the algebra $D(R)^{\otimes S}$
on $\LL(S)$. Since the algebra $D(R)^{\otimes S}$ is semisimple, we can
write
\begin{equation}\label{e:protected}
\LL(s_1, \dots, s_n) =  \bigoplus_{Y_1, \dots, Y_n} 
           (Y^{*}_{1} \boxtimes Y^{*}_{2} \boxtimes \dots 
                   \boxtimes Y^{*}_{n}) \otimes 
           \M(\Si, Y_1, \dots, Y_n)
\end{equation}
where $Y_{1}, \dots, Y_{n} \in \Irr(D(R))$ are irreducible representations
of $D(R)$  and $\M(\Si, Y_1, \dots, Y_n)$ is some vector space. (Note that
$\M$ also depends on the   cell decomposition $\De$ and the set of sites
$S$;  we will usually suppress it in the notation.) The algebra
$D(R)^{\otimes S}$ acts in an obvious way on the tensor product $Y^{*}_{1}
\boxtimes \dots \boxtimes Y^{*}_{n}$ and acts trivially on the space $\M$.

The space $\M(\Si, Y_1, \dots, Y_n)$ is called the \textit{protected
subspace} in  \cite{kitaev}. It is unaffected by local
operators, as suggested above, but we can act on it (in a suitable sense),
by \textit{nonlocal} operators, such as creating, interchanging or
annihilating particles. For example, there is a natural action of the braid
group on $\M$ which, with suitable starting data, is capable of performing
universal quantum computation.

Our next goal  will be relating   the protected space
$\M(\Si, Y_1,\dots,Y_n)$ with the Turaev--Viro and stringnet  model for surfaces
with boundary.  

\subsection{Rewriting the protected space}
Throughout this section, $\Si, S,  \YY$ are as in the previous section.

First, recall that the
space $\LL(s_1, \dots, s_n)$ has a natural structure of a
$D(R)^{\otimes n}$ module, where the $i$-th copy of $D(R)$ acts on site
$s_i$. Since for any collection $Y_{i} \in \Irr(D(R)$, the vector space
$Y_{1}\boxtimes \dots \boxtimes Y_{n}$ also has a natural structure of
$D(R)^{\otimes n}$-module, we can define the action of a $D(R)^{\otimes
n}$ on the space  
$$ 
(Y_1 \boxtimes \dots \boxtimes Y_{n})\otimes \LL(s_1, \dots, s_n)
$$
using Hopf algebra structure of $D(R)^{\otimes n}$. 

Using the decomposition of $\LL(s_1, \dots, s_n)$ from
\eqref{e:protected}, we can extract the protected space
$\M$: 
\begin{equation}\label{e:protected2}
\M(\Si, \YY) \cong 
[(Y_1 \boxtimes \dots \boxtimes Y_{n})\otimes \LL(s_1, \dots,
s_n)]^{D(R)^{\otimes n}} 
\end{equation}

Equivalently, consider the vector space 
$$
\HK(\Si, \De, \YY)=(Y_1 \boxtimes \dots \boxtimes Y_{n})\otimes
\HK(\Si,\De)
$$
where $\HK(\Si, \De)$ is the crude Hilbert space defined in
\seref{s:crudespace}. We will graphically represent vectors in this space
by writing a vector $x_e\in R$ next to each oriented edge $e$, and also
drawing, for every site $s_i$, a green segment connecting $v$ and center of the plaquette $p$ (as in \firef{f:site}) labelled by $y_i$, as shown in \firef{f:site-vector}. 
\begin{figure}[ht]
  \begin{tikzpicture}[baseline=0cm]
  \node[below] at (0, 0) {$p_i$};
  \pentagon
  \node[above right] at (v1) {$v_i$};
  \draw[green] (0,0)--(v1) node [black, pos=0.4, left] {$\scriptstyle y_i$};
  \draw[->] (v1) -- (v2) node[pos=0.5, right]{$\scriptstyle x_n$};
  \draw[->] (v2) -- (v3);
  \draw[->] (v3) -- (v4); 
  \draw[->] (v4) -- (v5) node[pos=0.5, left]{$\scriptstyle x_2$}; 
  \draw[->] (v5) -- (v1) node[pos=0.5, above left=-2pt]{$\scriptstyle
x_1$};
  \end{tikzpicture}
 \caption{Graphical presentation of a vector in $\HK(\Si, \De, \YY)$}\label{f:site-vector}
 \end{figure}

For every vertex $v$ and $a\in R$,  define the operator 
$\tilde A_v^a\colon \HK(\Si, \De, \YY)\to \HK(\Si, \De, \YY)$ by 
$\tilde A_v^a=\id_{\YY}\otimes A_v^a$ if $v\notin \{s_1,\dots, s_n\}$ and
by the figure below if $v=v_i\in S$:

\begin{center}
$$\tilde A^a_{v,p}\colon 
\qquad
\begin{tikzpicture}[baseline=0cm]
\node[dotnode] (v) at (0,0){}; \node[above] at (v) {$v$};
\node at (0,1.2) {$p$};
\draw[->] (v)-- +(40:1.3) node[pos=0.7, below right=-2pt]{$\scriptstyle
x_n$};
\draw[->] (v)-- +(-50:1.3) node[pos=0.7, above right=-2pt] {$\scriptstyle
x_{k+1}$};
\draw[green] (v)-- +(-90:0.7) node[black,pos=0.9, right=-2pt] {$\scriptstyle
y_i$};;
\draw[->] (v)-- +(-130:1.3) node[pos=0.7, above left=-2pt]{$\scriptstyle
x_k$};
\draw[->] (v)-- +(140:1.3) node[pos=0.7, above right=-2pt]{$\scriptstyle
x_1$};
\end{tikzpicture}
\qquad
\mapsto
\qquad
\begin{tikzpicture}[baseline=0cm]
\node[dotnode] (v) at (0,0){}; \node[above] at (v) {$v$};
\node at (0,1.2) {$p$};
\draw[->] (v)-- +(40:1.3) node[pos=0.7, below right=-2pt]{$\scriptstyle
a^{(n+1)}x_n$};
\draw[->] (v)-- +(-50:1.3) node[pos=0.7, above right=-2pt] {$\scriptstyle
a^{(k+2)}x_{k+1}$};
\draw[green] (v)-- +(-90:0.7) node[black, below] {$\scriptstyle
a^{(k+1)}y_i$};;
\draw[->] (v)-- +(-130:1.3) node[pos=0.7, above left=-2pt]{$\scriptstyle
a^{(k)}x_k$};
\draw[->] (v)-- +(140:1.3) node[pos=0.7, above right=-2pt]{$\scriptstyle
a^{(1)}x_1$};
\end{tikzpicture}
$$
\end{center}

\begin{center}
$$\tilde A^a_{v,p}\colon 
\qquad
\begin{tikzpicture}[baseline=0cm]
\node[dotnode] (v) at (0,0){}; \node[left] at (v) {$v$};
\node at (0,1.4) {$p$};
\draw[->] (v)-- +(40:1.3) node[pos=0.7, below right=-2pt]{$\scriptstyle
x_n$};
\draw[->] (v)-- +(-50:1.3);
\draw[green] (v)-- +(90:0.7) node[black,pos=0.9, right=-2pt] {$\scriptstyle
y_i$};;
\draw[->] (v)-- +(-130:1.3) node[pos=0.7, above left=-2pt]{$\scriptstyle
x_k$};
\draw[->] (v)-- +(140:1.3) node[pos=0.7, below left=-2pt]{$\scriptstyle
x_1$};
\end{tikzpicture}
\qquad
\mapsto
\qquad
\begin{tikzpicture}[baseline=0cm]
\node[dotnode] (v) at (0,0){}; \node[left] at (v) {$v$};
\node at (0,1.4) {$p$};
\draw[->] (v)-- +(40:1.3) node[pos=0.7, below right=-2pt]{$\scriptstyle
a^{(n+1)}x_n$};
\draw[->] (v)-- +(-50:1.3);
\draw[green] (v)-- +(90:0.7) node[black, above] {$\scriptstyle
a^{(1)}y_i$};;
\draw[->] (v)-- +(-130:1.3) node[pos=0.7, above left=-2pt]{$\scriptstyle
a^{(k+1)}x_k$};
\draw[->] (v)-- +(140:1.3) node[pos=0.7, below left=-2pt]{$\scriptstyle
a^{(2)}x_1$};
\end{tikzpicture}
$$
\end{center}

Similarly, for any plaquette $p$ and $\al\in \Rbar$, define  the operator 
$\tilde B_p^\al\colon \HK(\Si, \De, \YY)\to \HK(\Si, \De, \YY)$ by 
$ \tilde B_p^\al= \id_{\YY}\otimes B_p^\al$ if $ p\notin
\{s_1,\dots, s_n\}$ and by the figure below if $p=p_i\in S$ (recall that
comultiplication in $\Rbar$ is given by $\De(\al)=\al''\otimes \al'$):

$$\tilde B^\al_{p,v}\colon 
\qquad
\begin{tikzpicture}[baseline=0cm]
 \node[dotnode] (v1) at (60:1) {};
 \node[dotnode] (v2) at (0:1) {};
 \node[dotnode] (v3) at (-60:1) {};
 \node[dotnode] (v4) at (-120:1) {};
 \node[dotnode] (v5) at (180:1) {};
 \node[dotnode] (v6) at (120:1) {};
 \node[above right=-2pt] at (v1) {$v$};
 \draw[->] (v1)--(v2) node[pos=0.5, above right] {$\scriptstyle x_n$};
 \draw[->] (v2)--(v3);
 \draw[->] (v3)--(v4) node[pos=0.5, below] {$\scriptstyle x_{k+1}$};
 \draw[->] (v4)--(v5) node[pos=0.5, below left=-2pt] {$\scriptstyle
x_{k}$};
 \draw[->] (v5)--(v6);
 \draw[->] (v6)--(v1) node[pos=0.5, above] {$\scriptstyle x_1$}; 
 \draw[green](v4)-- +(60:0.7) node[black, above right=-2pt]{$\scriptstyle
y_i$};
\end{tikzpicture}
\quad\mapsto\quad 
\begin{tikzpicture}[baseline=0cm]
 \node[dotnode] (v1) at (60:1) {};
 \node[dotnode] (v2) at (0:1) {};
 \node[dotnode] (v3) at (-60:1) {};
 \node[dotnode] (v4) at (-120:1) {};
 \node[dotnode] (v5) at (180:1) {};
 \node[dotnode] (v6) at (120:1) {};
 \node[above right=-2pt] at (v1) {$v$};
 \draw[->] (v1)--(v2) node[pos=0.5, above right] 
                       {$\scriptstyle \al^{(n+1)}.x_n$};
 \draw[->] (v2)--(v3);
 \draw[->] (v3)--(v4) node[pos=0.5, below]
{$\scriptstyle\al^{(k+2)}.x_{k+1}$};
 \draw[->] (v4)--(v5) node[pos=0.5, below left=-2pt] {$\scriptstyle
\al^{(k)}.x_{k}$};
 \draw[->] (v5)--(v6);
 \draw[->] (v6)--(v1) node[pos=0.5, above] 
   {$\scriptstyle \al^{(1)}. x_1$}; 
 \draw[green](v4)-- +(60:0.7) node[black, above=-2pt]{$\scriptstyle
\al^{(k+1)}. y_i$};
\end{tikzpicture}
$$

$$\tilde B^\al_{p,v}\colon 
\qquad
\begin{tikzpicture}[baseline=0cm]
 \node[dotnode] (v1) at (60:1) {};
 \node[dotnode] (v2) at (0:1) {};
 \node[dotnode] (v3) at (-60:1) {};
 \node[dotnode] (v4) at (-120:1) {};
 \node[dotnode] (v5) at (180:1) {};
 \node[dotnode] (v6) at (120:1) {};
 \node[above right=-2pt] at (v1) {$v$};
 \draw[->] (v1)--(v2) node[pos=0.5, above right] {$\scriptstyle x_n$};
 \draw[->] (v2)--(v3);
 \draw[->] (v3)--(v4);
 \draw[->] (v4)--(v5) node[pos=0.5, below left=-2pt] {$\scriptstyle
x_{k}$};
 \draw[->] (v5)--(v6);
 \draw[->] (v6)--(v1) node[pos=0.5, above] {$\scriptstyle x_1$}; 
 \draw[green](v1)-- +(-120:0.7) node[black, below=-2pt]{$\scriptstyle
y_i$};
\end{tikzpicture}
\quad\mapsto\quad 
\begin{tikzpicture}[baseline=0cm]
 \node[dotnode] (v1) at (60:1) {};
 \node[dotnode] (v2) at (0:1) {};
 \node[dotnode] (v3) at (-60:1) {};
 \node[dotnode] (v4) at (-120:1) {};
 \node[dotnode] (v5) at (180:1) {};
 \node[dotnode] (v6) at (120:1) {};
 \node[above right=-2pt] at (v1) {$v$};
 \draw[->] (v1)--(v2) node[pos=0.5, above right] 
                       {$\scriptstyle \al^{(n)}.x_n$};
 \draw[->] (v2)--(v3);
 \draw[->] (v3)--(v4);
 \draw[->] (v4)--(v5) node[pos=0.5, below left=-2pt] {$\scriptstyle
\al^{(k)}.x_{k}$};
 \draw[->] (v5)--(v6);
 \draw[->] (v6)--(v1) node[pos=0.5, above] 
   {$\scriptstyle \al^{(1)}. x_1$}; 
\draw[green](v1)-- +(-120:0.7) node[black, below=-2pt]{$\scriptstyle
\al^{(n+1)}y_i$};
\end{tikzpicture}
$$

It is easy to see that then the operators $\tilde A_v$, $\tilde B_p$
satisfy the relations of   \thref{t:com_relations}; in particular, for any
site $s=(v,p)$ (including the sites $s_1,\dots, s_n$), the operators
$\tilde A_v, \tilde B_p$ satisfy the relations of Drinfeld double. It
follows from the definition of  $\LL$ and \eqref{e:protected2} that 
\begin{equation}\label{e:protected3} 
\M(\Si,\De,  \YY) = 
\{x \in \HK(\Si, \De,\YY) \st \tilde A^{h}_{v}x =\tilde  B^{\bar h}_{p}x = x
\qquad \forall v,p \} 
\end{equation}

\subsection{Turaev--Viro theory surfaces with boundary} 

We recall the definition of Turaev--Viro model for surfaces with boundary,
following \ocite{balsam-kirillov}. As before, let $\A$ be a spherical
fusion category. Let $\C$ be the Drinfeld center of $\A$; as is well known,
in the example $\A=\Rep(R)$, we have $\C=\Rep(D(R))$. We have an obvious
forgetful functor $F\colon \C\to \A$ which has an adjoint $I\colon \A\to
\C$ (see details in \ocite{balsam-kirillov}). 

We will use colored graphs on surfaces where some of the lines are colored
by elements of the Drinfeld double. When drawing such graphs, we
will show objects of $Z(\A)$ by double green lines   and the
half-braiding isomorphism $\ph_Y\colon Y\otimes V\to V\otimes Y$   by
crossing as in \firef{f:crossing}.
\begin{figure}[ht]
\begin{tikzpicture}[baseline=0pt]
\draw[drinfeld center](0,2)--(1,0) node [pos=0.2, left,black] {$Y$};
\draw[overline](1,2) -- (0,0) node [pos=0.2, right] {$V$};
\end{tikzpicture}
\caption{Graphical presentation of the half-braiding $\ph_Y\colon Y\otimes
V\to V\otimes Y$, $Y\in \Obj  Z(\A)$, $V\in \Obj \A$}\label{f:crossing}
\end{figure}
 
Now  let $\Si_0$ be a surface with $n$ boundary
components, together with a choice of marked point $p_a$ on each
boundary component.  Consider the new
surface $\Si$ obtained by gluing to $\Si_0$ $n$ copies of the standard
2-disk. This is a closed surface; moreover, each cell decomposition $\De$
of $\Si_0$ gives rise to a cell decomposition of $\Si$ obtained by adding to
$\De$ each of the glued disks as a 2-cell. These cells will be called {\em
embedded disks}. 

We can now define the state space for such a surface. Namely, let $l$ be a
coloring of edges of $\De$ by simple objects and let $\YY=\{Y_1,\dots,
Y_n\}$ be a collection of objects of $\C$, one object for each boundary
component of $\Si_0$.  Then we define the state space 
$$
\HTV(\Si_0,\De, \YY,l)=\bigotimes_C  H_{TV}(C,l)
$$
where the product is over all 2-cells of $\De$ (including the embedded
disks) and
$$
H_{TV}(C,l)=\begin{cases}
\<Y_a,l(e_1), l(e_2),\dots, l(e_n)\>&\quad
                  C=D_a \text{ -- an embedded disk}\\
\<l(e_1), l(e_2),\dots, l(e_n)\>&\quad C \text{ -- an ordinary 2-cell of
$\N$}
\end{cases}
$$
where $e_1,e_2,\dots$ are edges of $C$ traveled counterclockwise;
for the embedded disks, we also require that we start with the
marked point $p_a$; for ordinary 2-cells of $\De$ the choice of
starting point is not important.

As before, we now define
\begin{equation}\label{e:ex_state_sum}
  \HTV(\Si_0,\De, \YY)=\bigoplus_{l}\bigotimes_{C}\HTV(C, l)
\end{equation}
where $C$ runs over the set of all 2-cells (including the embedded disks)
and  the sum is taken over all equivalence classes of colorings 
$l$ of edges of $\De$. 

It has been shown in \ocite{balsam-kirillov} that for a suitably defined
notion of a cobordism between such surfaces with embedded disks, every
cobordism $M\colon \Si_1\to\Si_2$ (together with a cell decomposition
extending the cell decompositions of $\Si_1,\Si_2$) gives rise to a
linear operator 
$$
Z_{TV}(M)\colon H(\Si_1,\De, \YY)
     \to H(\Si_2,\De, \YY)
$$
which does not depend on the choice of the cell decomposition of $M$, so
that composition of cobordisms  corresponds to composition of linear
operators. Thus, we can repeat the same steps as before and define TV
theory for surfaces with boundary by 
$$
\ZTV(\Si_0, \YY)=\im(A)
$$
where $A=\ZTV(\Si_0\times I)\colon \HTV(\Si,\De, \YY)\to
\HTV(\Si,\De, \YY)$.

It has been shown in \ocite{balsam-kirillov} that this defines a 3-2-1
TQFT; in particular, so defined vector space does not depend on the choice
of cell decomposition $\De$. 

Moreover, it is possible to compute this vector space explicitly. For
example, if $\Si_0=S^2$ is sphere with $n$ boundary components, then
$$
\ZTV(\Si_0, Y_1,\dots, Y_n)\cong
\Hom_{\C}(\one, Y_1\dotimes  Y_n.)
$$

\subsection{Stringnet for surfaces with boundary} 
We can now describe the stringnet model as an extended theory, in which we
allow surfaces with boundary. We give an overview of the theory, referring
the reader to \ocite{stringnet} for a detailed description. 

Recall that given a spherical category $\A$, we defined the notion of a
colored graph $\Ga$ on an oriented  surface $\Si_0$. 
For a surface with boundary, we consider colored graphs which  may
terminate on the boundary, and the legs terminating on the boundary should
be colored by objects of $\A$. Thus,  every colored graph $\Ga$ defines
a collection of points $B=\{b_1,\dots, b_n\}\subset \del \Si_0$ (the
endpoints of the legs of $\Ga$) and a collection of objects $V_b\in \Obj\
\A$ for every $b \in B$: the colors of the legs of $\Ga$ taken with
outgoing orientation. We will denote the pair $(B, \{V_b\})$ by
$\VV=\Ga\cap \del\Si$ and call it {\em boundary value}. Similar to the
closed case, we can define, for a fixed boundary value $\VV$, the
stringnet space
$$
\Hs(\Si_0, \VV)=\left(\parbox{6cm}{formal combinations of colored graphs\\ 
 with boundary value $\VV$}\right)/\text{local relations}
$$

It was shown in \ocite{stringnet} that boundary conditions actually form a
category $\Chat(\del \Si_0)$  so that $\Hs(\Si_0, \VV)$ is functorial in $\VV$.
Moreover, if we denote by $\C(\del \Si_0)$ the pseudo-abelian completion of
this category, then one has an equivalence 
\begin{align*}
J\colon \C(S^1)&\simeq \C\\
\{V_1,\dots,V_n\}&\mapsto I(V_1\dotimes V_n)
\end{align*}
where $\C=Z(\A)$ is the Drinfeld center of $\A$ and $I\colon \A\to \C$
is the adjoint of the forgetful functor $Z(\A)\to \A$. Thus, if $\del\Si_0$
is a union of $n$ circles, then a choice of parametrization $\psi\colon
\del\Si_0\simeq S^1\sqcup \dots\sqcup S^1$ gives rise to an equivalence of
categories $\C(\del\Si_0)\simeq \C^{\boxtimes n}$.

Since any functor $\Chat\to \Vect$ naturally extends to a functor of the
pseudoabelian completion $\C\to\Vect$, we can define the stringnet space
$\Hs(\Si_0, \YY)$ for any $\YY\in \C(\del\Si_0)$. Equivalently, given a surface 
$\Si_0$ together with a parametrization $\psi$ of the boundary components,
we can define the vector space  $\Hs(\Si_0, \psi, \YY)$, where
$\YY=\{Y_1,\dots, Y_n\})$, $Y_a\in Z(\A)$.

The space $\Hs(\Si_0, \psi, \YY)$ admits an alternative definition.
Namely, let $\Si$ be the closed surface obtained by gluing to $\Si_0$ a
copy of the standard 2-disk $D$ along each boundary circle $(\del \Si_0)_a$
of $\Si_0$, using parametrization $\psi_a$. So defined, the surface comes with a
collection of marked points $p_a=\psi_a^{-1}(p)$, where $p=(1,0)$ is the
marked point on $S^1$. Moreover, for every  point $p_a$ we also have a
distinguished  ``tangent direction'' $v_a$ at $p_a$ (in PL setting, we
understand it as a germ of an arc staring at $p_a$), namely the direction
of the radius connecting $p$ with the center of the disk $D$. We will refer
to the collection  $(\Si, \{p_a\}, \{v_a\})$ as an {\em extended
surface}. It is easy to see that given  $(\Si, \{p_a\}, \{v_a\})$, the
original surface $\Si_0$ and parametrizations $\psi_a$ are defined uniquely
up to a contractible set of choices. 

For such an extended surface and a choice of collection of objects
$\YY=\{Y_1,\dots, Y_n\})$, $Y_a\in Z(\A)$,  define 
\begin{equation}\label{e:Hs-extended}
\Hhs(\Si, \YY)=\VGr'(\Si,  \YY)/(\text{Local relations})
\end{equation}
where $\VGr'(\Si, \YY)$ is the vector space of
formal linear combinations of colored graphs on $\Si$ such that each
colored graph has an uncolored one-valent vertex at each point $p_a$, with
the corresponding edge coming from direction $v_a$ (i.e., in some
neighborhood of $p_a$, the edge coincides with the corresponding arc) and
colored by the object $F(Y_a)$ as shown in  \firef{f:marked_point}, and
local relations are defined in the same way  as before: each embedded disk
$D\subset \Si$ {\bf not containing the special points} $p_a$ gives rise to
local relations.  
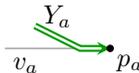
\begin{figure}[ht]
$$
\begin{tikzpicture}
\draw[thin, gray] (-1.4,0)--(0,0) node[pos=0.2, below, black] {$v_a$};
\draw[drinfeld center, ->] (-1,0.3)--(-0.4,0) node[pos=0.5, above] {$Y_a$}
                               --(0,0);
\node[dotnode, label=-45:$p_a$] at (0,0) {};
\end{tikzpicture}
$$
\caption{Colored graphs in a neighborhood of marked point}
\label{f:marked_point}
\end{figure}

The following lemma is a reformulation of results of \ocite{stringnet}.

\begin{lemma}\label{l:stringnet-bdry}
Let $\Si_0$ be a compact surface with $n$ boundary components,  $\psi\colon
\del\Si_0\simeq S^1\sqcup \dots\sqcup S^1$ --- a parametrization of the
boundary, and $\YY=\{Y_1,\dots, Y_n\})$,  $Y_a\in Z(\A)$ --- a choice of
boundary conditions. Then one has a canonical isomorphism 
$$
\Hs(\Si_0, \psi, \YY)=\{x \in \Hhs(\Si, \YY)\st B^{str}_{p_a}x =x \ \forall a\}
$$
where for each marked point $p_a$, the operator $B^{str}_{p_a}$ is defined by 
\begin{figure}[ht]
$$
\begin{tikzpicture}
\draw[drinfeld center, ->] (-1.2,0)--(0,0);
\node[dotnode, label=-45:$p_a$] at (0,0) {};
\end{tikzpicture}
\quad\mapsto\quad  \sum_i \frac{d_i}{\DD^2}\quad
  \begin{tikzpicture}
\draw[drinfeld center, ->] (-1.2,0)--(0,0);
\node[dotnode, label=-45:$p_a$] at (0,0) {};
    \draw[overline] circle(0.6);
    \node at (30:0.8) {$i$};
  \end{tikzpicture}
$$
\caption{Operator $B_p$ for a marked point}\label{f:B_p_marked}
\end{figure}

\end{lemma}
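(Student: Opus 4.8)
The plan is to mirror the closed-surface statement \thref{t:TVLW2}(1), treating each boundary circle of $\Si_0$ as a punctured disk glued on to form $\Si$, so that the marked points $p_a$ play the role of the punctures there. First I would build the comparison map $\Hs(\Si_0,\psi,\YY)\to\Hhs(\Si,\YY)$ geometrically. By functoriality in the boundary object and the equivalence $J\colon\C(S^1)\simeq\C$, $\{V_1,\dots,V_k\}\mapsto I(V_1\dotimes V_k)$, it suffices to treat $Y_a=I(V_1\dotimes V_k)$. Given a graph representing a class in $\Hs(\Si_0,\psi,\YY)$ with legs colored $V_1,\dots,V_k$ meeting $(\del\Si_0)_a$, I route these legs radially into the glued disk $D_a$, fuse them to a single strand colored $V_1\dotimes V_k$, and compose with the canonical morphism $V_1\dotimes V_k\to F(Y_a)=FI(V_1\dotimes V_k)$ provided by the adjunction between $F$ and $I$, obtaining a strand colored $F(Y_a)$ arriving at $p_a$ from the direction $v_a$. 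This is a representative in $\VGr'(\Si,\YY)$, and the construction is visibly compatible with the defining local relations of $\Hs(\Si_0,\psi,\YY)$.

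Next I would match local relations and identify the projector. Away from the marked points the embedded disks of $\Si$ used in \eqref{e:Hs-extended} are exactly those of $\Si_0$, so the two sets of relations agree except for relations supported on disks \emph{containing} some $p_a$ --- and these are precisely the relations absent from $\Hhs(\Si,\YY)$ because $p_a$ is a distinguished (puncture-like) point. As in the closed case, imposing the missing relation at $p_a$ is effected by the operator $B^{str}_{p_a}$: inserting the loop $\sum_i \frac{d_i}{\DD^2}$ encircling $p_a$ (\firef{f:B_p_marked}) and simplifying by the local relation \eqref{e:local_rel} is the projection onto the classes that can be isotoped freely across the former boundary circle. A class coming from $\Hs(\Si_0,\psi,\YY)$ is therefore $B^{str}_{p_a}$-invariant for all $a$, and conversely $B^{str}$-invariance lets one push the graph off a neighborhood of each $p_a$ and recover an honest boundary graph; thus the comparison map is an isomorphism onto $\im(B^{str})=\{x\in\Hhs(\Si,\YY)\st B^{str}_{p_a}x=x\ \forall a\}$, exactly as $\Hs\simeq\im(B^s)$ in \thref{t:TVLW2}(1).

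The step I expect to be the main obstacle is verifying that the leg colored $F(Y_a)$ at $p_a$ records the \emph{central} object $Y_a\in\C$ rather than merely its underlying object $F(Y_a)\in\A$. The distinguishing datum is the half-braiding $\ph_{Y_a}$: whenever a stringnet strand of $\Si_0$ is isotoped across the marked-point leg --- equivalently, pushed around the puncture $p_a$ --- the identification in $\Hhs(\Si,\YY)$ must be realized by $\ph_{Y_a}$, as in the crossing convention of \firef{f:crossing}. I would check that this rule is consistent with the local relations and independent of the residual choices (the routing of the legs into $D_a$ and the parametrization $\psi_a$, which is pinned down only up to a contractible set of choices), so that two graphs differing by transporting a strand once around $p_a$ are identified through $\ph_{Y_a}$ and no other data. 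Granting this compatibility --- the genuinely categorical input, and the reason the Drinfeld center appears --- the remaining points (naturality in $\YY$ and independence of $\De$) follow from the corresponding assertions in \ocite{stringnet} and \thref{t:TVLW2}.
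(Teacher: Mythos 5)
First, a point of comparison: the paper does not actually prove this lemma --- it is stated as ``a reformulation of results of \ocite{stringnet}'', so your attempt is being measured against a citation rather than an argument. That said, your outline follows the strategy of the cited source and of the closed-surface analogue \thref{t:TVLW2}: reduce to $Y_a=I(W)$ via the equivalence $J$ and pseudo-abelian completion, build the comparison map by routing the boundary legs into the glued disk $D_a$ and inserting the unit $W\to FI(W)$ of the adjunction, and identify the image with the fixed points of $B^{str}_{p_a}$. That is the right skeleton, and the point you single out as the crux --- that what must be tracked is the central structure of $Y_a$, not merely $F(Y_a)$ --- is indeed where all the content sits.

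Two caveats. The first is a conceptual slip in your third paragraph: in $\Hhs(\Si,\YY)$ as defined in \eqref{e:Hs-extended} there is \emph{no} identification of graphs obtained by transporting a strand around $p_a$; that space depends only on the underlying object $F(Y_a)\in\A$, precisely because local relations are forbidden on disks containing $p_a$. The half-braiding $\ph_{Y_a}$ enters only through the definition of the operator $B^{str}_{p_a}$ itself: the loop in \firef{f:B_p_marked} crosses the $Y_a$-colored leg, and that crossing is $\ph_{Y_a}$ by the convention of \firef{f:crossing}. What you actually need is the consequence that on $\im(B^{str}_{p_a})$ sliding a strand across the leg is realized by $\ph_{Y_a}$ --- so your consistency check is aimed at the definition when it should be aimed at the fixed subspace. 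The second caveat is that the key computation is asserted rather than carried out: that the image of the unit $W\to FI(W)$ is $B^{str}_{p_a}$-invariant (equivalently, that $I(W)$ with its canonical half-braiding absorbs the $\sum_i \tfrac{d_i}{\DD^2}$ loop --- the ``cloaking'' property of the induction functor), and that conversely $B^{str}$-invariance lets one expand the loop to $\del D_a$, cut along it, and recover a graph on $\Si_0$ with the correct boundary value. These are the main calculations in \ocite{stringnet}; without them your second paragraph restates the claim rather than proving it. As a blueprint the proposal is sound, but these are exactly the steps that cannot be waved through.
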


The following is the main result of  \ocite{stringnet}.
\begin{theorem}\label{t:TVLWbdry}
Let $\Si_0$ be a compact surface with $n$ boundary components,  $\psi\colon
\del\Si_0\simeq S^1\sqcup \dots\sqcup S^1$ --- a parametrization of the
boundary. Then for  any $\YY=\{Y_1,\dots,  Y_n\}\in Z(\A)^{\boxtimes n}$,
one has a  canonical functorial  isomorphism
  $$
    \ZTV(\Si_0,\psi, \YY)\cong \Hs(\Si_0,\psi,\YY).
  $$
where, as before, $\Si$ is obtained from $\Si_0$ by gluing disks along the
boundary. 
\end{theorem}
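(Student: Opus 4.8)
The plan is to mirror the closed-surface argument of \thref{t:TVLW2}, the only new ingredients being the embedded disks and the center-objects $Y_a$ sitting at the marked points $p_a$. By definition $\ZTV(\Si_0,\YY)=\im(A)$ with $A=\ZTV(\Si_0\times I)$ acting on $\HTV(\Si_0,\De,\YY)$, while \leref{l:stringnet-bdry} realizes $\Hs(\Si_0,\psi,\YY)$ as $\im\bigl(\prod_a B^{str}_{p_a}\bigr)\subset\Hhs(\Si,\YY)$, since the $B^{str}_{p_a}$ are commuting projectors. So it suffices to produce a normalization-compatible identification of the crude TV state space with a space of colored graphs on $\Si$ under which $A$ becomes the product of the stringnet loop-projectors, and then to pass to images.

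First I would extend \thref{t:TVLW2}(2). On the ordinary $2$-cells of $\De$ the dictionary is verbatim the closed-surface one: a simple edge-coloring $l$ together with a morphism in $\<l(e_1),\dots\>$ at each cell is recorded, via the dual spine, as a colored graph, with the same $\sqrt{d_{l(e)}}$ normalization factors. The embedded disks are the new feature: the TV datum on $D_a$ lies in $\<Y_a,l(e_1),\dots\>=\Hom_\A(\one,F(Y_a)\otimes l(e_1)\otimes\cdots)$, and I would read it as the germ of a colored graph at $p_a$ carrying a single $F(Y_a)$-leg along the tangent direction $v_a$ --- exactly the local model of \firef{f:marked_point} built into the definition \eqref{e:Hs-extended} of $\Hhs(\Si,\YY)$. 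This assignment is manifestly natural in each $Y_a$, which will give the asserted functoriality in $\YY$.

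Then I would compute $A=\ZTV(\Si_0\times I)$ from a cell decomposition of the cobordism. Away from the marked points this is identical to the closed case and, by \thref{t:TVLW2}(3), accounts for the ordinary local relations and vertex loops. At an embedded disk the cylinder $D_a\times I$ contributes $\sum_i (d_i/\DD^2)$ times a small $i$-colored loop encircling $p_a$; when this loop is pushed off it must cross the center-line $Y_a$, and that crossing is evaluated by the half-braiding $\ph_{Y_a}$ of \firef{f:crossing}. This is precisely the loop-projector $B^{str}_{p_a}$ of \firef{f:B_p_marked}. Hence $A$ corresponds to the ordinary-cell reductions composed with $\prod_a B^{str}_{p_a}$, and taking images yields $\ZTV(\Si_0,\YY)\cong\Hs(\Si_0,\psi,\YY)$.

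The crux is the matching at each marked point in the previous step: one must verify that the half-braiding produced by the TV evaluation of $D_a\times I$ --- where $Y_a$ genuinely lives in $\C=Z(\A)$ and the disk carries its half-braiding data --- coincides, with all orientation, framing and tangent-direction ($v_a$) conventions, with the half-braiding used to slide the encircling loop in $B^{str}_{p_a}$. This is the single place where the center structure enters essentially and where the closed-surface proof does not transfer formally; the delicate normalizations flagged after \thref{t:TVLW2} must also be tracked through. The complete verification is carried out in \ocite{stringnet}.
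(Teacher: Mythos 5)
Your proposal matches the paper's treatment: the paper states this theorem as the main result of \ocite{stringnet}, immediately refines it to \thref{t:TVLWbdry2} (the identification $\HTV(\Si_0,\De_0,\YY)\simeq\Hhs(\Si-\De^0,\YY)$ under which $Z_{TV}(\Si\times I)$ becomes $\prod_p B^{str}_p$, including the loop-projectors at the marked points of \firef{f:B_p_marked}), and defers the verification to that reference — which is exactly the reduction you reconstruct. Your identification of the half-braiding matching at the embedded disks as the one genuinely new point beyond the closed case is consistent with how the paper frames it.
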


As before, we will need a more detailed construction of the isomorphism of this theorem, parallel to the description for closed surfaces given in 
\thref{t:TVLW2}.  Namely, let $\De_0$  be a cell decomposition of $\Si_0$ such that for every boundary component $(\del \Si_0)_a$, the corresponding marked point $p_a=\psi^{-1}(0,1)$ is a vertex of $\De_0$. By adding to $\De_0$ a disk $D_a$ for each boundary component, we get a cell decomposition $\De$  of closed surface $\Si$.

Let $\Si-\De^0$ be the surface
with punctures obtained by removing from $\Si$ all vertices of $\De$ (this includes the marked points $p_a$).
Let $\Hhs(\Si-\De^0, \YY)$ be the stringnet space defined by boundary condition of \firef{f:marked_point} near puncture $p_a$ (and trivial boundary condition near all other punctures). Then one has the following results.

\begin{theorem}\label{t:TVLWbdry2}
  \par\noindent
  \begin{enumerate}
    \item One has  an isomorphism
$$
\Hs(\Si, \YY)\simeq \{x\in\Hhs(\Si-\De^0, \YY)\st B^{str}_p x=x\ \forall p\in \De^0\}\subset
\Hhs(\Si-\De^0, \YY)$$
where $B^s_p\colon\HsD\to\HsD$ is the operator which adds to a colored graph a
small loop around puncture $p$ as shown in \firef{f:B_p}, \firef{f:B_p_marked}

    \item One has a natural isomorphism 
    $\HTV(\Si_0,\De_0, \YY)\simeq \Hhs(\Si-\De^0, \YY)$
    \item Under the isomorphism of the previous part, the operator
associated to the cylinder  $Z_{TV}(\Si\times I)\colon \HTV\to\HTV$ is
identified with the projector $B^s=\prod_p B_p\colon \Hhs(\Si-\De^0, \YY)\to\Hhs(\Si-\De^0, \YY)$.
  \end{enumerate}
\end{theorem}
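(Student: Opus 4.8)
The plan is to prove all three parts by paralleling the closed-surface argument of \thref{t:TVLW2}, treating the embedded disks $D_a$ and their marked points $p_a$ as the only genuinely new ingredient. Everything in sight is assembled from local data attached to the cells of $\De$, so the strategy is to establish each statement cell-by-cell and then glue, isolating at every stage the extra contribution of an embedded disk as opposed to an ordinary $2$-cell.

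I would begin with part (2), the identification of state spaces. For each $2$-cell $C$ of $\De$ one places, dually, a single stringnet vertex inside $C$ colored by the chosen morphism in $H_{TV}(C,l)$, with one edge crossing each edge of $\del C$; after the rescaling by $\prod_e\sqrt{d_{l(e)}}$ already used in \leref{l:main1}, this is a linear isomorphism on each summand, hence on all of $\HTV(\Si_0,\De_0,\YY)$. The only new point is an embedded disk $C=D_a$: there $H_{TV}(D_a,l)=\<Y_a,l(e_1),\dots\>=\Hom_\A(\one,F(Y_a)\otimes\cdots)$, so the dual vertex acquires one extra leg colored by $F(Y_a)$ emanating toward the puncture $p_a$ in the direction $v_a$, which is exactly the boundary datum of \firef{f:marked_point}; this matches $\Hhs(\Si-\De^0,\YY)$ by construction. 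For part (1), the operators $B^{str}_p$ are commuting idempotents, since distinct punctures have disjoint support and each is a projector by the local relation \eqref{e:local_rel}, so their joint fixed subspace is $\im(B^s)$ with $B^s=\prod_pB^{str}_p$. For an ordinary puncture, lying in $\im(B^{str}_p)$ is precisely the condition that fills the puncture back in, which is \thref{t:TVLW2}(1) applied locally; for a marked puncture $p_a$ it is the condition of \leref{l:stringnet-bdry}. Imposing all of them simultaneously recovers $\Hs(\Si,\YY)$.

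Part (3) is then the computation of $Z_{TV}(\Si_0\times I)$ through the isomorphism of part (2). Choosing a cell decomposition of the cylinder that is a product away from the embedded disks, the TV state sum collapses, exactly as in \thref{t:TVLW2}(3), to the insertion of one loop around each puncture; translated into stringnets this is $\prod_pB_p$, where an ordinary puncture yields the loop of \firef{f:B_p} and a marked puncture $p_a$ yields the loop of \firef{f:B_p_marked}. I expect the main obstacle to lie here, at the marked points: one must check that the piece of the cylinder over an embedded disk produces precisely the half-braided loop of \firef{f:B_p_marked} around the double green strand colored $F(Y_a)$, and that the normalizations $d_i/\DD^2$ together with the $\sqrt{d_{l(e)}}$ rescalings of part (2) combine consistently there. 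Away from the embedded disks this is verbatim the closed-surface computation of \ocite{stringnet}; the real content is verifying that the Drinfeld-center strand interacts with the loop operator through the half-braiding in the manner the figures assert.
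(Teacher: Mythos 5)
The paper does not actually prove \thref{t:TVLWbdry2}: the result is quoted from \ocite{stringnet}, with only the remark that ``the proof of this theorem can be found in \ocite{stringnet}.'' So there is no in-paper argument to compare yours against step by step. That said, your outline reproduces the architecture of the proof in the cited reference and of its closed-surface analogue \thref{t:TVLW2}: identify $\HTV(\Si_0,\De_0,\YY)$ with the stringnet space on the punctured surface cell by cell via the dual graph, with each embedded disk $D_a$ contributing one extra leg colored $F(Y_a)$ running into the marked puncture $p_a$ along $v_a$ as in \firef{f:marked_point}; observe that the $B^{str}_p$ are commuting idempotents whose joint fixed space recovers the honest stringnet space; and evaluate the cylinder by a product cell decomposition. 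You also correctly isolate where the genuinely new work sits, namely at the marked points.

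Two caveats keep this a plan rather than a proof. First, in part (2) the target $\Hhs(\Si-\De^0,\YY)$ is a quotient by local relations, so ``a linear isomorphism on each summand'' does not by itself give injectivity or surjectivity of the glued map; one needs the standard-graph argument (every colored graph with the prescribed behavior at the punctures can be pushed onto the dual $1$-skeleton, and the only relations among such representatives are the expected ones), which is the real content of the corresponding statement in \ocite{stringnet} and is not addressed by your cell-by-cell bookkeeping. Second, and more importantly, the verification you flag in part (3) --- that the piece of $Z_{TV}(\Si\times I)$ lying over an embedded disk produces exactly the half-braided loop of \firef{f:B_p_marked}, with coefficient $d_i/\DD^2$ compatible with the $\sqrt{d_{l(e)}}$ rescalings --- is precisely the content that distinguishes this theorem from \thref{t:TVLW2}, and you state it as something ``to check'' rather than checking it. Nothing in your outline is wrong, and the reduction of the problem to that single local computation is exactly right; but until that computation is performed (it is carried out in \ocite{stringnet}, not in this paper, whose Lemma~\ref{l:mainkl} and Corollary~\ref{c:mainkl} handle only the analogous statement on the Kitaev side), the theorem is not proved.
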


The proof of this theorem can be found in \ocite{stringnet}; obviously, it
implies \thref{t:TVLWbdry}.

\section{Comparison of Kitaev, Turaev--Viro and Levin-Wen models for
surfaces with boundary}\label{s:main-boundary}

In this section, we establish the relation between the protected space for
Kitaev's model and the Turaev--Viro (and thus the Levin-Wen) space for
surfaces with boundary, extending \thref{t:main1} to surfaces with
boundary. 
\subsection{Statement of the main theorem}\label{s:main2}
As before, we fix a semisimple Hopf algebra $R$ over $\CC$ and denote
$\A=\Rep R$. As was mentioned before, in this  case we also have a
canonical equivalence of categories $\Rep(D(R))\cong Z(\A)$. 

Throughout the section, we fix a choice of a compact oriented surface $\Si$
(without boundary) and a cell  decomposition $\De$ of $\Si$. We also fix a
finite collection of disjoint  sites $S=\{s_1, \dots, s_n\}$ and a finite
collection $\YY=\{Y_1, \dots, Y_n\}$ of irreducible representations of
$D(R)$. 

We denote by $\De^*$ the dual cell decomposition of $\Si$. Then each site $s_i=(v_i,p_i)$ defines a cell $D_i$ (containing $v_i$) of $\De^*$ and a marked point $p_i$ on the boundary of $D_i$.

Denote by $\Si_0$ the surface with boundary and marked points,  obtained by removing from  $\Si$ the interiors of $D_1, \dots, D_n$. Clearly, in this situation $\Si$ can be obtained from $\Si_0$ by gluing the disks $D_1, \dots, D_n$.

\begin{theorem} \label{t:main2}
Let $\Si$, $\Si_0$, $\YY$ be as above. Then one has a
  canonical functorial  isomorphism
  $$
    \M(\Si,\De, \YY)\cong \ZTV(\Si_0,\YY)\cong \Hs(\Si_0,\{p_i\}, \YY),
  $$
 where $\M(\Si,\De,  \YY)$ is the protected space defined by
  \eqref{e:protected}.
\end{theorem}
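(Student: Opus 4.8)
The plan is to follow the proof of \thref{t:main1} step by step, now reading the colors $\YY$ at the sites as boundary conditions on $\Si_0$. The second isomorphism $\ZTV(\Si_0,\YY)\cong\Hs(\Si_0,\{p_i\},\YY)$ is precisely \thref{t:TVLWbdry}, so all the work is in the first one, $\M(\Si,\De,\YY)\cong\ZTV(\Si_0,\YY)$. Recall from \eqref{e:protected3} that $\M(\Si,\De,\YY)$ is cut out of $\HK(\Si,\De,\YY)$ by the conditions $\tilde A_v^h x=x$ and $\tilde B_p^{\hbar}x=x$ for all $v,p$, and that (by the remark preceding \eqref{e:protected3}) the $\tilde A_v,\tilde B_p$ satisfy the relations of \thref{t:com_relations}, so together with $h^2=h$, $\hbar^2=\hbar$ and \leref{l:haar_double} the operators $\tilde A_v^h,\tilde B_p^{\hbar}$ are commuting idempotents. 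I would first impose only the vertex conditions, introducing the intermediate space $\HA(\Si,\De,\YY)=\{x\in\HK(\Si,\De,\YY)\st \tilde A_v^h x=x\ \forall v\}$, so that $\M=\{x\in\HA(\Si,\De,\YY)\st \tilde B_p^{\hbar}x=x\ \forall p\}$.

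The first main step is a boundary version of \leref{l:main1}: a natural isomorphism $\HA(\Si,\De,\YY)\simeq\HTV(\Si_0,\YY)$, computed with the dual decomposition $\De^*$ (which restricts to a decomposition of $\Si_0$ with the $D_i$ as embedded disks). Using the half-edge description of $\HK$ and $R\simeq\bigoplus V_i\otimes V_i^*$, I would rewrite $\HK(\Si,\De,\YY)$ as $\bigoplus_l\bigotimes_v\mathcal H_v$, where for a site vertex $v=v_i$ the factor $\mathcal H_{v_i}$ additionally carries $F(Y_i)$ in the slot determined by the green segment. By \coref{c:haar} the image of $\prod_v\tilde A_v^h$ is $\bigoplus_l\bigotimes_v(\text{$R$-invariants of }\mathcal H_v)$: at an ordinary vertex this is $\<l(e_1),\dots,l(e_n)\>$ as before, while at a site it is $\<F(Y_i),l(e_1),\dots\>$, which is exactly the embedded-disk summand $H_{TV}(D_i,l)$. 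Passing to $\De^*$ (vertices of $\De$ $\leftrightarrow$ $2$-cells of $\De^*$) and rescaling by $\sqrt{\mathbf{d}_l}$ as in \leref{l:main1} gives the isomorphism; composing with \thref{t:TVLWbdry2}(2) identifies it with $\Hhs(\Si-\De^0,\YY)$.

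The second main step is a boundary version of \leref{l:main2}: under this isomorphism, $\tilde B=\prod_p\tilde B_p^{\hbar}$ is carried to the cylinder projector $\prod_p B^{str}_p$ of \thref{t:TVLWbdry2}(3). At a plaquette $p\notin S$ the computation is verbatim that of \leref{l:main2}: \leref{l:hbar2} writes $\tilde B_p^{\hbar}$ as a sum over an internal color $k$, and the local relation \eqref{e:local_rel} collapses it to a single $k$-loop around the puncture, namely $B^{str}_p$. At a site plaquette $p=p_i$ the only new feature is that $\hbar\in\Rbar\subset D(R)$ now also acts on $y_i$; since $Y_i$ is a $D(R)$-module, this $\Rbar$-action is governed by the half-braiding \eqref{e:Rmatrix}, so the resulting $k$-loop crosses the double green $Y_i$-line through the crossing of \firef{f:crossing}, producing exactly the operator $B^{str}_{p_i}$ of \firef{f:B_p_marked}. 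Hence $\tilde B=\prod_p B^{str}_p$, and therefore $\M=\im(\tilde B)\cong\im(Z_{TV}(\Si_0\times I))=\ZTV(\Si_0,\YY)$.

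The hard part will be this last identification at the site plaquettes, i.e.\ verifying that the $\Rbar$-component of $\hbar$ acting on the $D(R)$-module $Y_i$, once transported through the half-edge/stringnet dictionary, reproduces the half-braiding crossing of \firef{f:crossing} rather than a bare permutation of factors. Concretely, this requires generalizing \leref{l:hbar2} to the situation where one extra strand carrying $Y_i$ is threaded by the $\hbar$-loop, and then matching the $R$-matrix expansion $R=\sum_\al x_\al\otimes x^\al$ against the half-braiding formula \eqref{e:Rmatrix}; everything else is a routine adaptation of the closed-surface argument of \thref{t:main1}.
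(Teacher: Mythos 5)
Your proposal is correct and follows essentially the same route as the paper: the intermediate space $\HA(\Si,\De,\YY)$ and the two boundary versions of \leref{l:main1} and \leref{l:main2} are exactly the paper's Lemmas \ref{l:main3} and \ref{l:main4}, and the ``hard part'' you isolate --- showing that the $\Rbar$-component of $\hbar$ acting through the $D(R)$-module structure on $Y_i$ reproduces the half-braiding crossing of \firef{f:crossing} --- is precisely the content of the paper's \leref{l:mainkl} and \coref{c:mainkl}, proved there by the same $R$-matrix computation you describe (expanding $R=\sum x_\al\otimes x^\al$ against \eqref{e:Rmatrix} and using the symmetry of $\De(\hbar)$).
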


The following example is instructive.
\begin{example}
Let $\Si$ be the sphere with $n$ sites  labeled by $Y_1, \dots, Y_n$.
Then $Z_{TV}(\Si_0, Y_1, \dots, Y_n) \cong \Hom_{D(R)}(\one, Y_1 \otimes
\dots \otimes Y_n)$ \cite{balsam-kirillov}. It follows that for $n = 1$,
$\M(Y)$ is one-dimensional if $Y$ is trivial one-dimensional representation
of $D(R)$, and $\M(Y)=0$ if $Y$ is non-trivial irreducible representation
of $D(R)$; thus, $\LL(s_1)=\M(s_1, \one)$ is one dimensional, i.e. there
are no  single particle excitations on the sphere. For $n = 2$,
$Z_{TV}(\Si_0, Y,Z) =\Hom_{D(R)}(\one, Y \otimes Z) = 0$, unless $Z \cong
Y^*$. It follows that
two-particle excitations on the sphere consist of a particle of type $Y$ at
one site and a particle of type $Y^*$ at another site. 
\end{example}

The proof of the theorem occupies the rest of this section. We begin with
some preliminary results.

\subsection{Lemma on Haar integral}
We will need the following technical lemma.

\begin{lemma} \label{l:mainkl}
    Let $Y$ be a representation of $D(R)$, and let $\hbar\in \Rbar$ be the
Haar integral of $\Rbar$. 

Consider the map 
\begin{align*}
    Y\otimes R&\to Y\\
    y\otimes r&\mapsto \hbar''.y \<\hbar', r\>
\end{align*}
where $\la.y$ stands for the action of $\Rbar$ on $Y$. 

Then under the isomorphism $R\simeq \bigoplus V_i\otimes V_i^*$, this map
is identified with the map pictured below
\begin{equation}\label{e:crossing3}
\sum_i \frac{d_i}{\dim R}\qquad
\begin{tikzpicture}[baseline=2cm]
\draw[drinfeld center, ->](2,4)--(4, 2)  node[left, pos=0.2] {$Y$} -- (2,0)
;
\draw[->] (3,4)-- +(0,-2);
\draw[overline] (3,2) arc (-180:0:1.5) -- (6,4) node[right, pos=0.9] {$V_i$};
 \end{tikzpicture}
\end{equation}
where the upper crossing is just the permutation of factors \textup{(}note that it is not a morphism of modules\textup{)}. 
\end{lemma}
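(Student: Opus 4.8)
The plan is to split the verification into an algebraic collapse of the Haar integral and a graphical reading of the result as a half-braiding. First I would observe that, since the action of $\Rbar$ on $Y$ is linear, the map may be rewritten as $y\otimes r\mapsto \Phi(r).y$, where $\Phi\colon R\to\Rbar$ is the linear map $\Phi(r)=\<\hbar',r\>\hbar''$. Because $\hbar',\hbar''$ refers to comultiplication in $R^*$, the pairing axiom \eqref{e:comult_dual} gives the compact characterization
$$\<\Phi(r),s\>=\<\hbar',r\>\<\hbar'',s\>=\<\hbar,rs\>\qquad(s\in R).$$
So the entire content of the lemma is (i) to compute $\Phi$ in the basis \eqref{e:isom} and (ii) to recognize the $\Rbar$-action on $Y$ graphically.

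For (i) I would substitute the $n=2$ case of \leref{l:hbar} into this characterization. Taking $r=v\otimes w\in V_i\otimes V_i^*$ and a test element $s=v'\otimes w'\in V_j\otimes V_j^*$, \leref{l:hbar} yields $\<\hbar,rs\>=\delta_{ij}\tfrac{d_i}{\dim R}\<v,w'\>\<w,v'\>$. Reading off the resulting functional of $s$ identifies
$$\Phi(v\otimes w)=\frac{d_i}{\dim R}\,(w\otimes v),$$
where $w\otimes v$ is regarded as an element of the summand $V_i^*\otimes V_i$ of $\Rbar=\bigoplus_i V_i^*\otimes V_i$. This already accounts for the scalar $\sum_i \tfrac{d_i}{\dim R}$ in \eqref{e:crossing3}, and reduces the problem to understanding how $w\otimes v$ acts on $Y$.

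For (ii) I would expand $w\otimes v$ in the basis of $\Rbar$ dual to the matrix units $e_a\otimes e^b$ of $V_i\otimes V_i^*$ and compare with the $R$-matrix $\sum_\al x_\al\otimes x^\al$. A short computation then gives
$$(w\otimes v).y=\sum_\al \<w, x_\al v\>\,x^\al y,$$
which is precisely the evaluation $\<w,-\>$ applied to the half-braiding \eqref{e:Rmatrix} of $v\in V_i$ past $y$. This is exactly the picture in \eqref{e:crossing3}: the lower crossing (with $V_i$ drawn over $Y$, cf.\ \firef{f:crossing}) is the half-braiding \eqref{e:Rmatrix}, the upper crossing is the bare permutation of the $Y$ and $V_i$ factors needed to put them in braiding position (hence not a morphism of modules, as the statement warns), and the arc closing the $V_i$-strand is the evaluation pairing that feeds $v$ in on one leg and $w$ in on the other. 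Assembling (i) and (ii) gives the asserted identity.

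The genuinely delicate part is not the algebra but the matching of conventions in the last step: one must check that the two legs of the single $V_i$-strand in \eqref{e:crossing3} carry the $V_i$ and $V_i^*$ tensor factors of $r$ in the correct order, and that the over/under sense of the lower crossing corresponds to the half-braiding \eqref{e:Rmatrix} itself rather than to its inverse. One should also confirm at the outset that the action ``$\la.y$'' in the statement is the one coming from the $D(R)$-module structure of $Y$ via $\Rbar\hookrightarrow D(R)$, since it is this identification that makes \eqref{e:Rmatrix} applicable verbatim in step (ii).
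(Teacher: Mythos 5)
Your proposal is correct and follows essentially the same route as the paper's proof: both arguments come down to matching $\sum_\al \<w, x_\al v\>\, x^\al y$ (evaluation composed with the half-braiding \eqref{e:Rmatrix}) against $\hbar''.y\,\<\hbar',r\>$ by means of the explicit matrix-coefficient formula for $\hbar$ in \leref{l:hbar}. The only difference is cosmetic: the paper starts from the picture and uses the $n=1$ case of \leref{l:hbar} together with the symmetry of $\De(\hbar)$, while you start from the algebraic formula and use the $n=2$ case, which lets you bypass the symmetry step.
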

\begin{proof}
    By definition of the $R$-matrix \eqref{e:Rmatrix}, the map \eqref{e:crossing3} is given
by 
\begin{align*}
    y\otimes v\otimes f&\mapsto 
      \frac{d_i}{\DD^2}\sum_{\al}x^\al y\otimes x_\al v\otimes f
      \mapsto \frac{d_i}{\DD^2}\sum_{\al}x^\al y\otimes \<x_\al v, f\>\\
      &=\sum_\al x^\al y \<\hbar, x_\al v\otimes f\>
\end{align*}
where $v\in V_i$, $f\in V_i^*$, and $x^\al, x_\al$ are dual bases in
$\Rbar, R$. 

Since for any $\la\in \Rbar$, we have $\<\la, xr\>=\<\la',x\>
\<\la'', r\>$,  this can be rewritten as 
$$
\sum_\al x^\al y \<\hbar', x_\al\>\<\hbar'', v\otimes f\>
=\hbar'  y \<\hbar'', v\otimes f\>
$$
Since $\De(\hbar)$ is symmetric ($\hbar'\otimes \hbar''=\hbar''\otimes \hbar'$), we get the statement of the lemma. 
\end{proof}

Combining this with the formula for multiplication and comultiplication in $R$ under the isomorphims $R\simeq \bigoplus V_i\otimes V_i^*$, we get the following corollary, generalizing \leref{l:hbar2}.
\begin{corollary}\label{c:mainkl}
Consider the map 
\begin{align*}
Y\otimes R^{\otimes n}&\to Y\otimes R^{\otimes n}\\
y\otimes x_n\dotimes x_1&\mapsto 
    \hbar^{(n+1)}y\otimes \hbar^{(n)}.x_n\dotimes \hbar^{(1)}.x_1\\
    &=\hbar''y\otimes \<\hbar',S(x'_n\dots x'_1)\> x''_n\dotimes x''_1
\end{align*}
    
Then under the isomorphism $R\simeq \bigoplus V_i\otimes V_i^*$, this map
is identified with the map pictured below
\begin{align*}
&y\otimes x_n\dotimes x_1 \mapsto 
\sum_{i_1, \dots, i_n,j_1, \dots, j_n, k}    \frac{d_{i_1}\dots d_{i_n}
d_k}{\dim R}\sum_{\al, \be, \dots}\\
&\begin{tikzpicture}
\coordinate (ytop) at (0,1); \coordinate (ybot) at (0,-3);
\draw[drinfeld center](ytop)--(ybot);
\node[morphism] (v1) at (1,0) {$\al$};
\node[morphism] (w1) at (1.5,0) {$\al$};
\node[above] at (v1|-ytop) {$i_n$};
\node[above] at (w1|-ytop) {$i^*_n$};
\node[below] at (v1|-ybot) {$j_n$};
\node[below] at (w1|-ybot) {$j^*_n$};
\draw (v1|-ytop) -- (v1) -- (v1|-ybot);
\draw (w1|-ytop) -- (w1) -- (w1|-ybot);
\node[small_morphism] (v2) at (3,0) {$\be$};
\node[small_morphism] (w2) at (3.5,0) {$\be$};
\draw (v2|-ytop) -- (v2) -- (v2|-ybot);
\draw (w2|-ytop) -- (w2) -- (w2|-ybot);
\draw[->] (w1) .. controls (2.25, -0.8).. (v2) node[pos=0.5, above] {$k$};
\draw[->] (w2) .. controls (4.25, -0.8).. (5,0) node[pos=0.5, above] {$k$};
\node[small_morphism] (vn) at (9,0) {};
\node[small_morphism] (wn) at (9.5,0) {};
\draw (vn|-ytop) -- (vn) -- (vn|-ybot);
\draw (wn|-ytop) -- (wn) -- (wn|-ybot);
\draw[->] (7.5,0) .. controls (8.25, -0.8).. (vn) node[pos=0.5, above] {$k$};
\coordinate (l) at (-1,-1);
\coordinate (r) at (10.5,-1);
\draw[<-, overline] (v1) 
          .. controls +(210:0.5) and +(90:0.5) ..
      (l);
\draw (l) 
          .. controls +(-90:1.5) and +(-90:1.5) ..
      (r) node[pos=0.5, above] {$k$}
          .. controls +(90:0.5) and +(-45:0.5) ..
      (wn);
\end{tikzpicture}\end{align*}

\end{corollary}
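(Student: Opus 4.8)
The plan is to obtain this statement by combining \leref{l:mainkl} with \leref{l:hbar2}, exactly as \leref{l:hbar2} was itself obtained from \leref{l:hbar} and \leref{l:hopfrewritten}. The starting observation is that the displayed map is governed by a single $(n+1)$-fold coproduct of $\hbar$: one component, $\hbar^{(n+1)}$, acts on $Y$ through the $\Rbar$-action, while the remaining components $\hbar^{(n)},\dots,\hbar^{(1)}$ act on $x_n,\dots,x_1$ precisely as in \leref{l:hbar2}. Because $\De(\hbar)$ is symmetric (this is \thref{t:haar} applied to $\Rbar$), the way this coproduct is apportioned among the $Y$-slot and the $n$ copies of $R$ is unambiguous, and the two groups of factors may be treated independently.

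First I would apply \leref{l:hbar2} to the $R^{\otimes n}$-part alone. Under the isomorphism $R\simeq\bigoplus V_i\otimes V_i^*$ of \eqref{e:isom}, this rewrites the action of $\hbar^{(n)},\dots,\hbar^{(1)}$ on $x_n\dotimes x_1$ as the chain of coupons $\ph_\al,\ph^\al,\ph_\be,\ph^\be,\dots$ carrying the scalar $\frac{d_{i_1}\dots d_{i_n}d_k}{\dim R}$ and linked by a single strand colored $k$; in \leref{l:hbar2} this $k$-strand closes up into a loop running along the bottom from the rightmost coupon back to the leftmost one. It then remains to account for the last Haar component acting on $Y$. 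By \leref{l:mainkl}, the effect of $\hbar^{(n+1)}$ on $Y$, once expressed through $R\simeq\bigoplus V_i\otimes V_i^*$ and the half-braiding \eqref{e:Rmatrix}, is exactly to cross the $Y$-line over a summed $V_i$-strand. Identifying that $V_i$-strand with the $k$-strand produced above, the loop of \leref{l:hbar2} is no longer closed but instead threads through the green $Y$-line, yielding the \texttt{overline} crossing in the stated figure.

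The step that needs care is the last one: one must check that the strand which \leref{l:mainkl} crosses over $Y$ is the \emph{same} $k$-strand that \leref{l:hbar2} builds, rather than an independent one. This is where the symmetry of $\De(\hbar)$ and the coassociativity bookkeeping enter, since the component pairing against $S(x_n'\dots x_1')$ and the component acting on $Y$ must be read off as adjacent legs of one coproduct, so that the half-braiding identification of \leref{l:mainkl} applies to the very strand that closes the loop in \leref{l:hbar2}. Once this matching of strands is verified, assembling the two pictures --- the coupon chain with its $k$-strand from \leref{l:hbar2} and the $Y$-crossing from \leref{l:mainkl} --- reproduces the figure in the statement verbatim, and the scalar factors combine to give $\frac{d_{i_1}\dots d_{i_n}d_k}{\dim R}$.
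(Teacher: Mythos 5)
Your proposal is correct and takes essentially the same route as the paper, which derives the corollary in one line by combining \leref{l:mainkl} with the multiplication/comultiplication formulas that underlie \leref{l:hbar2}; your identification of the single coproduct leg of $\hbar$ that both pairs against $S(x'_n\dots x'_1)$ and acts on $Y$ --- so that \leref{l:mainkl} converts the closed $k$-loop of \leref{l:hbar2} into the half-braiding crossing over the $Y$-line, with only one overall factor of $d_k/\dim R$ --- is exactly the content of that combination.
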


\subsection{Proof of the main theorem}
We can now complete the proof of \thref{t:main2}, by combining results of the  two previous subsections.
 
By \eqref{e:protected3}, the space $\M$ can be obtained from the
space $\HK(\Si, \De,\YY)$ by applying projectors $A_v, B_p$. Let us
consider the intermediate space obtained by $A_v$ projectors only:
 
$$\HA(\Si, \De,\YY) =
\{x \in \HK(\Si, \De,\YY) \st \tilde A^{h}_{v}x = x \qquad \forall v \}
$$

\begin{lemma}\label{l:main3}
We have an isomorphism 
$$ 
\HA(\Si, \De,\YY) \cong \HTV(\Si, \De^*, \YY^*)\cong\Hhs(\Si-\De^{*0}, \YY)
$$
where the space on the right is the string net space on $\Si$, with the
centers of plaquettes removed, and boundary condition $Y_i$ at site $s_i$
(cf. \eqref{e:Hs-extended}). 
\end{lemma}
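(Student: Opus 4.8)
The plan is to prove the two isomorphisms separately: the first, $\HA(\Si,\De,\YY)\cong\HTV(\Si,\De^*,\YY^*)$, by a direct generalization of the computation in \leref{l:main1}, and the second, $\HTV(\Si,\De^*,\YY^*)\cong\Hhs(\Si-\De^{*0},\YY)$, as an application of \thref{t:TVLWbdry2}. First I would rewrite $\HK(\Si,\De,\YY)=(Y_1\boxtimes\dots\boxtimes Y_n)\otimes\HK(\Si,\De)$ using the isomorphism $R\simeq\bigoplus V_i\otimes V_i^*$ of \eqref{e:isom}, exactly as in the proof of \leref{l:main1}, presenting $\HK(\Si,\De)$ as $\bigoplus_l\bigotimes_{\ee} l(\ee)$. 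Regrouping the factors by vertex, and treating the green segment at each marked site $s_i=(v_i,p_i)$ as an extra strand colored by $F(Y_i)$ inserted at its position in the counterclockwise cyclic order at $v_i$, I obtain $\HK(\Si,\De,\YY)=\bigoplus_l\bigotimes_v \mathcal H_v'$, where $\mathcal H_v'$ is the tensor product of the incident edge colors (outgoing, counterclockwise) together with $F(Y_i)$ when $v=v_i$. This bookkeeping is precisely what makes the definition of $\tilde A^a_v$ transparent: the operator acts on the green strand by the same component of $\Delta^{(\cdot)}(a)$ as on an ordinary edge strand.

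Next I would apply the vertex projectors $\tilde A_v=\tilde A_v^h$. By \coref{c:haar}, the image of $\prod_v\tilde A_v$ on each summand is $\bigotimes_v\langle\mathcal H_v'\rangle$, where $\langle\mathcal H_v'\rangle=\Hom_R(\one,\mathcal H_v')$; at a marked vertex this is $\langle l(e_1),\dots,F(Y_i),\dots,l(e_n)\rangle$ with $F(Y_i)$ in the green position. Thus
$$\HA(\Si,\De,\YY)=\bigoplus_l\bigotimes_v\langle\mathcal H_v'\rangle.$$
Since the vertices of $\De$ are the $2$-cells of $\De^*$, an ordinary vertex $v\notin S$ contributes the state space of an ordinary $2$-cell of $\De^*$, while a marked vertex $v_i$ contributes the state space of the embedded disk $D_i$ with a boundary insertion. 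Comparing with the definition of $\HTV(\Si,\De^*,\YY^*)$ and using the canonical duality \eqref{e:Vcheck}, the insertion $F(Y_i)$ carried outward along the green segment is identified with the boundary object $Y_i^*$ of $D_i$, which accounts for the dual $\YY^*$. As in \leref{l:main1} I would rescale the resulting map $\theta$ by $\sqrt{\mathbf{d}_l}=\prod_e\sqrt{d_{l(e)}}$ to match the stringnet normalization, yielding the first isomorphism.

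The second isomorphism is then \thref{t:TVLWbdry2}(2), applied to $\Si$ with the disks $D_i$ of $\De^*$ in the role of the glued disks. Under the site/duality picture of \firef{f:site}, the green segment joining $v_i$ to the center of $p_i$ becomes exactly the radius from the center of the face $D_i$ to its boundary marked point $p_i$, so the boundary data of \firef{f:marked_point} matches the green strand. The apparent discrepancy between the TV label $\YY^*$ and the stringnet label $\YY$ is reconciled here by orientation: the marked-point edge of \firef{f:marked_point}, colored $F(Y_i)$ and oriented toward the puncture $p_i$, reads under the outgoing-boundary-value convention as the dual $Y_i^*$, which is precisely the TV boundary object.

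The main obstacle is this duality and orientation bookkeeping in the middle step. One must check carefully that the $R$-module $F(Y_i)$ produced by the Haar projection at $v_i$, together with the outgoing orientation of the green segment and the location of the marked point $p_i$ on $\partial D_i$, corresponds to the boundary object $Y_i^*$ in the $\langle\,\cdot\,\rangle$-convention for embedded disks, and that the surviving edge factors are cyclically ordered compatibly with the counterclockwise traversal of $\partial D_i$. Everything else is the same linear-algebra rearrangement as in \leref{l:main1}, now carried out in the presence of the extra $Y_i$ strands.
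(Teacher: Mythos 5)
Your proposal is correct and follows essentially the same route as the paper: the paper's proof likewise repeats the argument of \leref{l:main1}, regrouping $\HK(\Si,\De,\YY)$ by vertex with the extra strand $F(Y_i)$ at each marked vertex, applying \coref{c:haar} to identify the image of $\prod_v\tilde A_v$ with $\bigoplus_l\bigotimes_v\<\,\cdots\>$, matching this with the $2$-cells (including embedded disks) of $\De^*$, rescaling by $\sqrt{\mathbf{d}_l}$, and then invoking \thref{t:TVLWbdry2}. Your extra care with the $\YY$ versus $\YY^*$ orientation bookkeeping is welcome, since the paper's own proof writes $\HTV(\Si,\De^*,\YY)$ while the lemma statement has $\YY^*$, leaving that reconciliation implicit.
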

\begin{proof}
The proof repeats with necessary changes the proof of \leref{l:main1}.

Namely, the same arguments as in the proof of \leref{l:main1} show that 
$$
\HA(\Si, \De,\YY)=\bigoplus_l\bigotimes_v H_A(v,l)
$$
where the product is over all vertices of $\De$ and 
$$
H_v(v,l)=\begin{cases}
          \<l(e_1), \dots, l(e_n)\>, \quad &v\notin S\\
          \<Y_i,l(e_1), \dots, l(e_n)\>, \quad &v=v_i\in S\\
        \end{cases}
$$
where $e_1, \dots, e_n$ are edges starting at $v$, in counterclockwise order. Thus, we see that we have a natural isomorphism
$H_A(v,l)=H_{TV}(C_v, l)$, where $C_v$ is the 2-cell of the dual cell decomposition $\De^*$  corresponding to $v$. Note that it also holds in the case when $v\in S$, in which case $C_v$ is the embedded disk $D_i$.

Thus, we have  a natural isomorphism
$$
\theta\colon \HA(\Si, \De,\YY)\simeq \HTV(\Si,\De^*, \YY)
$$

As in the proof of \leref{l:main1}, we choose to rescale it to define 
$$
\tilde \theta=\sqrt{\mathbf{d}_l} \theta
$$

\firef{f:HA2} shows the composition map
$\HA(\Si,\De,\YY)\xxto{\tilde\th} \HTV(\Si, \De^*,\YY)
\simeq \Hhs(\Si-\De^{*0}, \YY)$.

\begin{figure}[ht]
\begin{align*}
\begin{tikzpicture}
\pentagon
\draw[green](v1)--(0,0) node[pos=0.7, above left=-2pt, black] {$y$};
\node at (0,0) {$\ast$};
\coordinate (m1) at ($(v1)!.5!(v2)$); 
\coordinate (m2) at ($(v2)!.5!(v3)$);
\coordinate (m3) at ($(v3)!.5!(v4)$);
\coordinate (m4) at ($(v4)!.5!(v5)$);
\coordinate (m5) at ($(v5)!.5!(v1)$);
\draw[->](v1)--  +(72:0.5) node[pos=0.7, above left]{$\scriptstyle u_1$};
\draw[->](v5)--  +(144:0.5) node[pos=0.7, above right]{$\scriptstyle u_2$};
\draw[->](v4)--  +(-144:0.5) node[pos=0.7, below right]{$\scriptstyle u_3$};
\draw[->](v3)--  +(-72:0.5) node[pos=0.7, below left]{$\scriptstyle u_4$};
\draw[->](v2)--  +(0:0.5) node[pos=0.7, above]{$\scriptstyle u_5$};
\draw[->] (v1)--(m1) node[pos=0.5, above right=-2pt]{$\scriptstyle w_5$};
\draw[->] (v2)--(m1) node[pos=0.5, above right=-2pt]{$\scriptstyle v_5$};
\draw[->] (v2)--(m2) node[pos=0.5, below right=-2pt]{$\scriptstyle w_4$};
\draw[->] (v3)--(m2) node[pos=0.5, below right=-2pt]{$\scriptstyle v_4$};
\draw[->] (v3)--(m3) node[pos=0.5, below]{$\scriptstyle w_3$};
\draw[->] (v4)--(m3) node[pos=0.5, below]{$\scriptstyle v_3$};
\draw[->] (v4)--(m4) node[pos=0.5, left]{$\scriptstyle w_2$};
\draw[->] (v5)--(m4) node[pos=0.5, left]{$\scriptstyle v_2$};
\draw[->] (v5)--(m5) node[pos=0.5, above]{$\scriptstyle w_1$};
\draw[->] (v1)--(m5) node[pos=0.5, above]{$\scriptstyle v_1$};
\end{tikzpicture}
\quad 
&\mapsto
\dots d_{i_1}\dots d_{i_5}\dots 
\begin{tikzpicture}
 \node[morphism] (v1) at (72:1) {$\ph_1$};
 \node[morphism] (v2) at (0:1) {$\ph_5$};
 \node[morphism] (v3) at (-72:1) {$\ph_3$};
 \node[morphism] (v4) at (-144:1) {$\ph_3$};
 \node[morphism] (v5) at (144:1) {$\ph_2$};
\node at (0,0) {$\ast$};
\draw[green] (v1)--(0,0) node[pos=0.7, above left=-2pt, black] {$Y$};
\draw[->](v1)--  +(72:0.7) node[pos=0.7, above left]{$\scriptstyle j_1$};
\draw[->](v5)--  +(144:0.7) node[pos=0.7, above right]{$\scriptstyle j_2$};
\draw[->](v4)--  +(-144:0.7) node[pos=0.7, below right]{$\scriptstyle j_3$};
\draw[->](v3)--  +(-72:0.7) node[pos=0.7, below left]{$\scriptstyle j_4$};
\draw[->](v2)--  +(0:0.7) node[pos=0.7, above]{$\scriptstyle j_5$};
 \draw[->] (v1)--(v5) node[pos=0.5, above]{$\scriptstyle i_1$};
 \draw[->] (v5)--(v4)  node[pos=0.5, left]{$\scriptstyle i_2$};
 \draw[->] (v4)--(v3)  node[pos=0.5, below left]{$\scriptstyle i_3$};
 \draw[->] (v3)--(v2)  node[pos=0.5, below right]{$\scriptstyle i_4$};
 \draw[->] (v2)--(v1)  node[pos=0.5, above right]{$\scriptstyle i_5$};
\end{tikzpicture}\\
& v_1\in V_{i_1}, w_1\in V_{i_1}^*, \dots, \quad, y\in Y \\
& \ph_1=y \otimes w_5\otimes u_1\otimes v_1, \dots
\end{align*}
\caption{Isomorphism $\HA\simeq\Hs_{\De^*}$. Asterisk $\ast$
shows the puncture obtained by removing a vertex of $\De^*$.}\label{f:HA2}
\end{figure}

\end{proof}
\begin{lemma}\label{l:main4}
Under the isomorphism of the previous lemma, the operators $B_p$ of Kitaev's model (for all $p$, including $p\in S$) are identified with the operators $B_p^s$ of stringnet model.
\end{lemma}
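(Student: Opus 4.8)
The plan is to follow closely the proof of \leref{l:main2}, organizing the argument according to whether a plaquette lies at a site or not. For a plaquette $p\notin S$, the operator $\tilde B_p$ is by definition just $\id_\YY\otimes B_p$, and it acts only on the edges surrounding $p$, none of which carries a boundary label $Y_i$. Hence, after passing through the isomorphism $\tilde\theta$ of \leref{l:main3}, the computation is word-for-word identical to that of \leref{l:main2}: one invokes \leref{l:hbar2}, applies the local relation \eqref{e:local_rel}, and identifies $\tilde B_p$ with the stringnet loop operator $B_p^{str}$ of \firef{f:B_p}.

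The genuinely new case is a plaquette $p=p_i$ belonging to a site $s_i$, where the Haar integral $\bar h$ acts simultaneously on the surrounding edges \emph{and} on the boundary object $Y_i$ through its $D(R)$-module structure (on the green segment via $\al^{(k+1)}.y_i$). Here I would invoke \coref{c:mainkl} in place of \leref{l:hbar2}. After the identification $\HA(\Si,\De,\YY)\simeq\Hhs(\Si-\De^{*0},\YY)$, the operator $\tilde B_{p_i}$ is given exactly by the picture of that corollary, in which the closed strand labeled $k$ encircles all the coupons $\ph_\al,\ph^\al$ around the plaquette and crosses the double green line carrying $Y_i$ by means of the half-braiding. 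The essential point is that the $\Rbar$-action of $\bar h$ on $Y_i$ is reinterpreted, via the $R$-matrix \eqref{e:Rmatrix}, as precisely this half-braiding crossing; this reinterpretation is the technical content of \leref{l:mainkl} and its corollary, and it is what distinguishes the boundary case from the closed one.

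The main obstacle is therefore to check that the local relation \eqref{e:local_rel} collapses this configuration to the marked-point loop operator $B_{p_i}^{str}$ of \firef{f:B_p_marked}, rather than the plain loop of \firef{f:B_p}. I expect this to proceed verbatim as in \leref{l:main2}: summing over the internal label $k$ with weight $d_k/\dim R=d_k/\DD^2$ and applying \eqref{e:local_rel} removes the coupons $\ph_\al,\ph^\al$, restores the original edge colorings $i_1,\dots,i_n$, and leaves a single loop labeled $k$ around the puncture. The only difference from the closed case is that, because of the half-braiding already present in the picture of \coref{c:mainkl}, this residual loop now links the green strand $Y_i$ with a crossing, so it is exactly the operator drawn in \firef{f:B_p_marked}. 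Matching the normalizations $d_k/\DD^2$ on both sides then completes the identification of $\tilde B_p$ with $B_p^s$ for all plaquettes $p$, including those at the sites.
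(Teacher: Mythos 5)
Your proposal is correct and follows essentially the same route as the paper, whose proof is simply the two-sentence remark that the case $p\notin S$ repeats \leref{l:main2} and the case $p\in S$ follows from \coref{c:mainkl}; your write-up is a faithful expansion of exactly those two steps, correctly identifying the half-braiding from \leref{l:mainkl}/\coref{c:mainkl} as the only new ingredient and matching the normalization $d_k/\DD^2$ with the marked-point loop operator of \firef{f:B_p_marked}.
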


\begin{proof}
For $p\notin S$, the proof is the same as in \leref{l:main2}. For $p\in S$, it follows from \coref{c:mainkl}.
\end{proof}

Taken together, these two lemmas immediately imply \thref{t:main2}.

%
%
%

\begin{bibdiv}
\begin{biblist}

\bib{BK}{book}{
   label={BakK2001},
   author={Bakalov, Bojko},
   author={Kirillov, Alexander, Jr.},
   title={Lectures on tensor categories and modular functors},
   series={University Lecture Series},
   volume={21},
   publisher={American Mathematical Society},
   place={Providence, RI},
   date={2001},
   pages={x+221},
   isbn={0-8218-2686-7},
   review={\MR{1797619 (2002d:18003)}},
}
\bib{balsam2}{article}{ 
  label = {Bal2010a},
  author={Balsam, Benjamin},
  title={Turaev-Viro invariants as an extended TQFT II},
  eprint = {arXiv:1010.1222},
}
\bib{balsam3}{article}{ 
  label = {Bal2010b},
  author={Balsam, Benjamin},
  title={Turaev-Viro invariants as an extended TQFT III},
  eprint = {arXiv:1012.0560},
}
\bib{balsam-kirillov}{article}{ 
   label={BalK2010},
  author={Balsam, Benjamin },
  author={Kirillov, Alexander, Jr},
  title={Turaev-Viro invariants as an extended TQFT},
  eprint={arXiv:1004.1533},
}
\bib{barrett}{article}{
   label={BW1996},
   author={Barrett, John W.},
   author={Westbury, Bruce W.},
   title={Invariants of piecewise-linear $3$-manifolds},
   journal={Trans. Amer. Math. Soc.},
   volume={348},
   date={1996},
   number={10},
   pages={3997--4022},
   issn={0002-9947},
   review={\MR{1357878 (97f:57017)}},
   doi={10.1090/S0002-9947-96-01660-1},
}
\bib{bombdel}{article}{ 
  label={BD2008},
  author={Bombin, Hector},
  author={Martin-Delgado, Miguel},
  title={A Family of Non-Abelian Kitaev Models on a Lattice:
Topological Condensation and confinement},
  eprint={arXiv:0712.0190},
}

\bib{buerag}{article}{ 
  label = {BA2009},
  author={Buerschaper, Oliver},
  author={Aguado, Miguel},
  title={Mapping Kitaev's quantum double lattice models to Levin and Wen's string-net models},
  eprint={arXiv:0907.2670},
}
\bib{buerschaper}{article}{ 
  label = {BMCA2010},
  author={Buerschaper, Oliver},
  author={Mombelli, Juan M.},
  author={Christandl, Matthias},
  author={Aguado, Miguel},
  title={A hierarchy of topological tensor network states},
  eprint={arXiv:1007.5283},
}
\bib{DNR}{book}{
   label={DNR},
   author={Dascalescu, Sorin},
   author={Nastasescu, Constantin},
   author={Raianu, Serban},
   title={Hopf Algebras: An Introduction},
   series={Monographs in Pure and Applied Mathematics},
   volume={235},
   publisher={Marcel Dekker},
   place={New York},
   date={2000},
   pages={401},
   isbn={0-8247-0481-9},
}
\bib{drinfeld}{article}{ 
  author={Drinfeld, Vladimir},
  author={Gelaki,Shlomo},
  author={Nikshych, Dmitri},
  author={Ostrik, Victor},
  title={On braided fusion categories I},
  eprint={arXiv:0906.0620},
}


\bib{ENO2005}{article}{
   label={ENO2005},
   author={Etingof, Pavel},
   author={Nikshych, Dmitri},
   author={Ostrik, Viktor},
   title={On fusion categories},
   journal={Ann. of Math. (2)},
   volume={162},
   date={2005},
   number={2},
   pages={581--642},
   issn={0003-486X},
   review={\MR{2183279 (2006m:16051)}},
   doi={10.4007/annals.2005.162.581},
}

\bib{etingof-schiffmann}{book}{
   author={Etingof, Pavel},
   author={Schiffmann, Olivier},
   title={Lectures on quantum groups},
   series={Lectures in Mathematical Physics},
   edition={2},
   publisher={International Press},
   place={Somerville, MA},
   date={2002},
   pages={xii+242},
   isbn={1-57146-094-2},
   review={\MR{2284964 (2007h:17017)}},
}

\bib{FKLW}{article}{
   label={FKLW2003},
   author={Freedman, Michael H.},
   author={Kitaev, Alexei},
   author={Larsen, Michael J.},
   author={Wang, Zhenghan},
   title={Topological quantum computation},
   note={Mathematical challenges of the 21st century (Los Angeles, CA,
   2000)},
   journal={Bull. Amer. Math. Soc. (N.S.)},
   volume={40},
   date={2003},
   number={1},
   pages={31--38 (electronic)},
   issn={0273-0979},
   review={\MR{1943131 (2003m:57065)}},
   doi={10.1090/S0273-0979-02-00964-3},
}

%


\bib{kadar}{article}{ 
  label={KMR2009},
  author={Kadar, Zoltan},
  author={Marzuoli, Annalisa},
  author={Rasetti, Mario},
  title={Microscopic description of 2d topological phases, 
          duality and 3d state sums},
  eprint={arXiv:0907.3724},
}
\bib{kadar2}{article}{ 
  label={KMR2008},
  author={Kadar, Zoltan},
  author={Marzuoli, Annalisa},
  author={Rasetti, Mario},
  title={Braiding and entanglement in spin networks: a combinatorial approach to topological phases},
  eprint={arXiv:0806.3883},
}

\bib{Kassel}{book}{
   label={Kas},
   author={Kassel, Christian},
   title={Quantum Groups},
   series={Graduate Texts in Mathematics},
   volume={155},
   publisher={Springer-Verlag},
   place={New York},
   date={1994},
   pages={564},
   isbn={9-7803-8794-3701},
}
\bib{stringnet}{article}{ 
   label={Kir2011},
  author={Kirillov Jr, Alexander},
  title={String-net Model of Turaev-Viro Invariants},
  eprint={arXiv:1106.0633},
}
\bib{PLCW}{article}{ 
   label={Kir2010},
  author={Kirillov Jr, Alexander},
  title={On piecewise linear cell decompositions},
  eprint={arXiv:1009.4227},
  date={September 2010},
}

\bib{kitaev}{article}{ 
   label={Kit2003},
  author={Kitaev, A. Yu.},
  title={Fault-tolerant quantum computation by anyons},
  journal={Annals of Physics},
  volume={303}, 
  number={1},
  date={2003},
  pages={2-303},
}
\bib{kuperberg}{article}{ 
  label={KKR2010},
  author={Koenig, Robert},
  author={Kuperberg, Greg},
  author={Reichardt, Ben W.},
  title={Quantum computation with Turaev-Viro codes},
  eprint={arXiv:1002.2816},
}
\bib{LR}{article}{
   label={LR1987},
   author={Larson, R.G.},
   author={Radford, D.E.},
   title={Semisimple cosemisimple Hopf algebras},
   journal={Amer. J. Math},
   volume={109},
   date={1987},
   pages={187--195},
 
}
\bib{levin-wen}{article}{ 
   label={LW2005},
  author={Levin, Michael},
  author={Wen, Xiao-Gang},
  title={String-net condensation: A physical mechanism for topological
phases},  journal={Phys. Rev. B},
  volume={71},
  number={4},
  date={2005},
  doi={10.1103/PhysRevB.71.045110},
}

\bib{morrison-walker}{article}{ 
   label={MW2010},
  author={Morrison, Scott},
  author={Walker, Kevin},
  title={The blob complex},
  eprint={http://canyon23.net/math/},
}
\bib{muger1}{article}{   
   label={Mug2003a},
   author={M{\"u}ger, Michael},
   title={From subfactors to categories and topology. I. Frobenius algebras
   in and Morita equivalence of tensor categories},
   journal={J. Pure Appl. Algebra},
   volume={180},
   date={2003},
   number={1-2},
   pages={81--157},
   issn={0022-4049},
   review={\MR{1966524 (2004f:18013)}},
   doi={10.1016/S0022-4049(02)00247-5},
}

\bib{muger2}{article}{
   label={Mug2003b},
   author={M{\"u}ger, Michael},
   title={From subfactors to categories and topology. II. The quantum
double of tensor categories and subfactors},
   journal={J. Pure Appl. Algebra},
   volume={180},
   date={2003},
   number={1-2},
   pages={159--219},
   issn={0022-4049},
   review={\MR{1966525 (2004f:18014)}},
   doi={10.1016/S0022-4049(02)00248-7},
}

\bib{turaev}{book}{
   label={Tur1994},
   author={Turaev, V. G.},
   title={Quantum invariants of knots and 3-manifolds},
   series={de Gruyter Studies in Mathematics},
   volume={18},
   publisher={Walter de Gruyter \& Co.},
   place={Berlin},
   date={1994},
   pages={x+588},
   isbn={3-11-013704-6},
   review={\MR{1292673 (95k:57014)}},
}

\bib{TV}{article}{
   label={TV1992},
   author={Turaev, V. G.},
   author={Viro, O. Ya.},
   title={State sum invariants of $3$-manifolds and quantum $6j$-symbols},
   journal={Topology},
   volume={31},
   date={1992},
   number={4},
   pages={865--902},
   issn={0040-9383},
   review={\MR{1191386 (94d:57044)}},
   doi={10.1016/0040-9383(92)90015-A},
}

\bib{TV2}{article}{
   label={TV2010},
  author={Turaev, Vladimir},
  author={Virelizier, Alexis},
  title={On two approaches to 3-dimensional TQFTs},
  eprint={arXiv:1006.3501},
}
\bib{Walk}{article}{
   label={KW2006},
  author={Walker, Kevin},
  title={Topological Quantum Field Theories},
  eprint={http://canyon23.net/math/},
}

\end{biblist}
\end{bibdiv}
\end{document}